\newtheorem{thm}{Theorem}
\newtheorem{prop}[thm]{Proposition}
\newtheorem{lem}[thm]{Lemma}
\newtheorem{cor}[thm]{Corollary}
\newtheorem{conj}[thm]{Conjecture}
\newtheorem*{thmnn}{Theorem}
\theoremstyle{definition}
\newtheorem{rmk}[thm]{Remark}
\begin{document}

\title{On Differentiating Symmetric Functions}

\author{Shaul Zemel}

\maketitle

\section*{Introduction}

Consider a function $\phi$ of $N$ variables $x_{i}$, $1 \leq i \leq N$, say from a field $\mathbb{F}$, and assume that $\phi$ is symmetric. It is well-known that the elementary symmetric functions $e_{r}:=\sum_{|I|=r}\prod_{i \in I}x_{i}$ (where the sum is over the subsets $I$ of size $r$ of the indices between 1 and $N$) generate the ring of symmetric polynomials in the $x_{i}$'s, and therefore $\phi$ can be presented as a function $\psi$ of the expressions $e_{r}$, $1 \leq r \leq N$, viewed as coordinates on the $N$th symmetric power $\operatorname{Sym}^{N}\mathbb{F}$, which is the quotient of $\mathbb{F}^{N}$ under the action of the symmetric group $S_{N}$ by interchanging the coordinates.

Assume now that $\mathbb{F}$ is the real field $\mathbb{R}$ and $\phi$ is continuously differentiable, or that $\mathbb{F}$ is $\mathbb{C}$ and $\phi$ is holomorphic. Then so will the function $\psi$ of the symmetric expressions $e_{r}$ be, and one may ask how to write the derivatives of the latter function with respect to these coordinates, in terms of the derivatives of $\phi$. Of course, the derivatives of $\phi$ are obtained from those of $\psi$ by the chain rule, but we are interested in inverting this relation, which will, in particular, give a symmetric function of the $x_{i}$'s.

Such questions do show up in some applications, for example in the theory of Riemann surfaces, Abel--Jacobi maps, and Thomae formulae. In proving Thomae's derivative formula (see, e.g., \cite{[EKZ]} for a rather recent case), one has to differentiate the Abel--Jacobi map as a map on positive divisors of fixed degree, which is the same as a symmetric power of the Riemann surface (hence locally a symmetric power of $\mathbb{C}$). The Jacobian of this map involves, at the double points from \cite{[EKZ]}, derivatives of second order (this is already hinted in the proof of the Riemann--Roch theorem in \cite{[FK]}, but without any details). Now, the Abel--Jacobi map is the sum of the values of a single-variable function on the coordinates (such functions are known as \emph{trace functions} in \cite{[B]} and others), which makes a few statements simpler. We will, however, analyze the actions on general functions in this paper.

Since there will be many derivatives in this paper, we shall denote, when $f$ is a function of many variables including $v$, the derivative $\frac{\partial f}{\partial v}$ by simply $f_{v}$. When the variables of $f$ are numbered, say $v_{l}$, $1 \leq l \leq M$, we shall shorthand $f_{v_{l}}$ to simply $f_{l}$. Similarly, we shall write $\partial_{v}$ for the operator $\frac{\partial}{\partial v}$, and with numbered variables we shall use the shortened notation $\partial_{l}$ for $\partial_{v_{l}}$ in case no confusion can arise. We emphasize that in every such derivative the other variables remain constant when we differentiate, and when we express the same function in terms of different coordinates, we will change the name of the function, like the distinction we already made between $\phi$ and $\psi$ above already exemplifies.

\smallskip

Now, as when $N=1$ there is nothing to consider, we examine the case of $N=2$, with the variables $x=x_{1}$ and $y=x_{2}$ and the symmetric functions $s=e_{1}=x+y$ and $p=e_{2}=xy$. Writing the symmetric function $\phi=\phi(x,y)$ as $\psi=\psi(s,p)$, we obtain \[\phi_{x}=\psi_{s}+y\psi_{p}\qquad\mathrm{and}\qquad\phi_{y}=\psi_{s}+x\psi_{p},\] from which we get, when $y \neq x$, the expressions \[\psi_{s}=\frac{y\phi_{y}-x\phi_{x}}{y-x}\qquad\mathrm{and}\qquad\psi_{p}=\frac{\phi_{x}-\phi_{y}}{y-x}.\] Our first observation is that the differential operators that we have to apply on $\psi$ for obtaining these derivatives are, in general, \emph{not} in the Weyl algebra $\mathbb{C}[x,y]\langle\partial_{x},\partial_{y}\rangle^{S_{2}}$ from \cite{[B]} and others, as it involves division by $y-x$. The reason for this is that while for general functions $f$ there is no relation between the derivatives $f_{x}$ and $f_{y}$, for a symmetric function $f$ the difference $f_{x}-f_{y}$ is anti-symmetric, thus vanishes when $x=y$, which means that if $f$ is a polynomial then this difference is divisible by $y-x$. This means that the algebra acting on symmetric functions should be larger than the $S_{N}$-invariants of the ordinary Weyl algebra, and contain also elements like $\frac{\partial_{x}-\partial_{y}}{y-x}$ (which also equals $\frac{\partial_{y}-\partial_{x}}{x-y}$ and is thus symmetric) in two variables. The full Weyl algebra, for any (finite) number of variables, is determined in Corollary \ref{Weyl} below.

Another observation is exemplified by considering the points with $y=x$, for which we take the limit $y \to x$ of our expressions for $\psi_{p}$ and $\psi_{s}$. Since $\phi$ is symmetric, we know that at diagonal points we have $\phi_{x}(x,x)=\phi_{y}(x,x)$, and we find that \[\lim_{y \to x}\psi_{p}=\lim_{y \to x}\frac{\phi_{x}(x,y)-\phi_{x}(x,x)-\phi_{y}(x,y)+\phi_{y}(x,x)}{y-x}=\phi_{xy}-\phi_{yy}\] (the latter summand also equals $\phi_{xx}$ by symmetry, so that $\psi_{p}$ at a diagonal point can be described as the difference between the mixed second derivative of $\phi$ and the pure one, with \emph{the} pure one being independent of the choice of variable). Since $\psi_{s}$ equals $\phi_{y}-x\psi_{p}$ and $\phi_{x}-y\psi_{p}$ (or the more symmetric expression $\frac{\phi_{x}+\phi_{y}}{2}-\frac{x+y}{2}\psi_{p}$), its value at the diagonal point $(x,x)$, where $\phi_{x}=\phi_{y}$, is $\phi_{y}-x(\phi_{xy}-\phi_{yy})=\phi_{x}-x(\phi_{xy}-\phi_{xx})$. Note that these expressions contain the value of the variable itself ($x$ or $y$), and mix derivatives of $\phi$ of different orders.

\smallskip

One might then pose the question, whether we can replace the $e_{r}$'s by a set of coordinates in which the derivatives of $\psi$ will be more natural expressions in the derivatives of $\phi$, in particular involving derivatives of $\phi$ in as uniform a degree as possible when the values of the variables coincide, and without having to multiply by these values themselves. We would like the $r$th coordinates to be homogenous of degree $r$ in the $x_{i}$'s (so that in particular the first one will be $e_{1}$, up to constant multiples), thus expressible in terms of partitions of $r$.

The paper \cite{[B]} considers some differential operators and coordinates, and the ones chosen there are the Newton polynomials of the elementary symmetric functions, with which $e_{r}$ is, up to a multiplicative constant perhaps, the $r$th power sum function $p_{r}:=\sum_{i=1}^{N}x_{i}^{r}$. However, consider the case $N=2$ again, choose again $s$ as the first coordinate but the power sum $q=x^{2}+y^{2}$ as the second one, and write $\phi(x,y)$ as $\eta(s,q)$. Then differentiation yields \[\phi_{x}=\eta_{s}+2x\eta_{q}\qquad\mathrm{and}\qquad\phi_{y}=\eta_{s}+2y\eta_{q},\] from which for $x \neq y$ one obtains \[\eta_{s}=\frac{y\phi_{x}-x\phi_{y}}{y-x}\qquad\mathrm{and}\qquad\eta_{q}=\frac{\phi_{y}-\phi_{x}}{2(y-x)},\] the limiting process at the diagonal is established by noticing that $\eta_{q}$ is $-\frac{1}{2}\psi_{p}$ from before, and $\eta_{s}$ is $\frac{\phi_{x}+\phi_{y}}{2}+(x+y)\eta_{q}$, with again a limit involving derivatives of both orders 1 and 2 at diagonal points.

The right coordinates for $N=2$ are given in Lemma 2.3 of \cite{[EKZ]}. With $s=x+y$ as before but $u=\frac{(y-x)^{2}}{2}$, and with $\phi(x,y)=\varphi(s,u)$, we get \[\phi_{x}=\varphi_{s}-(y-x)\varphi_{u}\qquad\mathrm{and}\qquad\phi_{y}=\varphi_{s}+(y-x)\varphi_{u},\] relations whose inversion yields, when $x \neq y$, the expressions  \[\varphi_{s}=\frac{\phi_{x}+\phi_{y}}{2}\qquad\mathrm{and}\qquad\varphi_{u}=\frac{\phi_{y}-\phi_{x}}{2(y-x)}.\] With these coordinates we now see that $\varphi_{s}$ no longer involves the denominator $y-x$, so that it keeps its form also when $y=x$ (and then it equals $\phi_{x}$ and $\phi_{y}$ as well), and involves only first derivatives of $\phi$ also at such points (the form of the other derivative, $\varphi_{u}$, as a difference of the pure and mixed second derivatives of $\phi$ at the diagonal is preserved, up to a constant multiplier). These are the type of coordinates that we look for in general.

\smallskip

With the coordinates $s$ and $u$ from above, the first derivative that we obtained was a multiple of $\partial_{x}+\partial_{y}$, so that for general $N$ we would like our first derivative to be $\sum_{i=1}^{N}\partial_{i}$. The other derivative (in all three systems of coordinates) was a multiple of $\frac{\partial_{x}-\partial_{y}}{y-x}$, which is symmetric with respect to interchanging $x$ and $y$, and for obtaining a symmetric operator in $N$ variables we take the sum $\sum_{i<j}\frac{\partial_{i}-\partial_{j}}{x_{j}-x_{i}}$ (when some of the values of the $x_{i}$'s coincide, the corresponding expressions can be replaced by the associated combination $\partial_{i}\partial_{j}-\partial_{i}^{2}=\partial_{i}\partial_{j}-\partial_{j}^{2}$ of second derivatives). In three variables $x$, $y$ and $z$, the operator $(y-z)\partial_{x}+(z-x)\partial_{y}+(x-y)\partial_{z}$ is anti-symmetric with respect to interchanging any pair from the these variables. Therefore a symmetric operator is obtained by dividing it by the product $(x-y)(x-z)(y-z)$, which yields $\frac{\partial_{x}}{(y-x)(z-x)}+\frac{\partial_{y}}{(x-y)(z-y)}+\frac{\partial_{z}}{(x-z)(y-z)}$. Summing over all triples with $i<j<k$ of that operator obtained from $x_{i}$, $x_{j}$, and $x_{k}$ then yields a symmetric operator in $N$ variables, which only involves first partial derivatives and division by two differences of variables. When $x_{i}=x_{j}=x_{k}$ the corresponding operator produces a combination of third derivatives, which can be written as $\partial_{i}\partial_{j}\partial_{k}-\frac{3}{2}\partial_{i}^{2}\partial_{j}+\frac{1}{2}\partial_{i}^{3}$ (or any expression obtained by the action of the symmetric group $S_{3}$ on some of the summands).

We can now state our result (Theorem \ref{coorddef} below).
\begin{thmnn}
For every $1 \leq r \leq N$ there is a symmetric polynomial $u_{r}$ in the variables $x_{i}$, $1 \leq i \leq N$, which is homogenous degree $r$, such that $\{u_{r}\}_{r=1}^{N}$ is the unique system of coordinates with the following property: If we consider a symmetric function $\phi$ of the variables $x_{i}$, $1 \leq i \leq N$, then the derivative $\varphi_{r}=\varphi_{u_{r}}$ equals $\sum_{\{i_{h}\}_{h=1}^{r}\in\mathbb{N}_{N}^{r}\mathrm{\ distinct}}\sum_{h=1}^{r}\phi_{i_{h}}\big/\prod_{g \neq h}(x_{i_{g}}-x_{i_{h}})$.
\end{thmnn}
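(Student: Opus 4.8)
The plan is to recast the statement as a question about the single differential operator $D_{r}:=\sum_{\{i_{h}\}\ \mathrm{distinct}}\sum_{h=1}^{r}\partial_{i_{h}}\big/\prod_{g\neq h}(x_{i_{g}}-x_{i_{h}})$ that appears on the right-hand side, and to identify the $u_{r}$ as the generators of the symmetric polynomials that are \emph{dual} to the $D_{r}$. First I would record two structural facts about $D_{r}$: it is a first-order operator, hence a derivation, and although its coefficients are rational it carries symmetric polynomials to symmetric polynomials, because the putative poles along each diagonal $x_{i}=x_{j}$ cancel into the finite combinations of higher derivatives already exhibited for $r=2,3$ in the introduction; moreover $D_{r}$ is homogeneous of degree $-r$. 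Writing $\Lambda=\mathbb{F}[p_{1},\dots,p_{N}]$ for the ring of symmetric polynomials and using the chain rule $\phi_{i}=\sum_{s}\varphi_{s}(u_{s})_{i}$, the asserted identity $\varphi_{r}=D_{r}\phi$ holds for every symmetric $\phi$ if and only if the two Jacobians are mutually inverse; this is exactly the condition $D_{r}u_{s}=\delta_{rs}$ for all $r,s$ (the forward direction follows by evaluating the identity at $\phi=u_{s}$ itself), equivalently $D_{r}=\partial/\partial u_{r}$ as derivations of $\Lambda$. Existence and uniqueness of the coordinate system are thereby reduced to existence and uniqueness of homogeneous generators $u_{r}$ of $\Lambda$ dual to the derivations $D_{r}$.

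Next I would make this duality concrete on the power sums. Using the classical divided-difference evaluation $\sum_{i\in S}x_{i}^{k}\big/\prod_{j\in S,\,j\neq i}(x_{i}-x_{j})=h_{k-|S|+1}(x_{S})$ (with $h_{<0}=0$), one computes $D_{r}p_{m}=0$ for $m<r$, $D_{r}p_{r}=r(-1)^{r-1}\binom{N}{r}\neq0$, and $D_{r}p_{m}\in\Lambda_{m-r}$ for $m>r$. Hence the matrix $A$ with entries $A_{rm}:=D_{r}p_{m}$ is upper triangular with nonzero constant diagonal, so $A\in GL_{N}(\Lambda)$ and $D_{1},\dots,D_{N}$ form a free $\Lambda$-basis of $\mathrm{Der}(\Lambda)$. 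Solving $D_{r}u_{s}=\delta_{rs}$ then amounts to prescribing the Jacobian $(\partial u_{s}/\partial p_{m})=A^{-1}=:B$, an upper-triangular matrix whose entry $B_{ms}$ is homogeneous of degree $s-m$; equivalently one must integrate the $1$-forms $\omega_{s}:=\sum_{m}B_{ms}\,dp_{m}$, which are precisely the coframe dual to $\{D_{r}\}$ (indeed $\omega_{s}(D_{r})=(AB)_{rs}=\delta_{rs}$). Since $\omega_{s}(D_{a})$ is constant for every $a$, Cartan's formula gives $d\omega_{s}(D_{a},D_{b})=-\omega_{s}([D_{a},D_{b}])$, so all the $\omega_{s}$ are closed exactly when the derivations $D_{r}$ pairwise commute.

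The hard part, accordingly, is to prove that $[D_{r},D_{r'}]=0$ for all $r,r'$. I would argue that this commutator is again a derivation of $\Lambda$, now homogeneous of degree $-(r+r')$, and is therefore determined by its values on the generators $p_{1},\dots,p_{N}$; so it suffices to check $[D_{r},D_{r'}]p_{m}=0$ for every $m$, which is automatic when $m<r+r'$ on degree grounds, the remaining cases being a finite symmetric-function identity obtained by feeding the explicit formula for $D_{r'}p_{m}$ back into $D_{r}$ and watching the antisymmetric part cancel. This is the main obstacle, and the step where the precise shape of the coefficients $1/\prod(x_{i_{g}}-x_{i_{h}})$ is genuinely used. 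Granting commutativity, the $\omega_{s}$ are closed, so by the Poincaré lemma on the polynomial ring $\Lambda$ there exist primitives $u_{s}$ with $du_{s}=\omega_{s}$; the degree bookkeeping forces $u_{s}$ to be homogeneous of degree $s$ (and the additive constant to vanish). Uniqueness follows at once: two admissible systems differ by elements killed by every $D_{r}$, hence by every derivation since $\{D_{r}\}$ is a basis of $\mathrm{Der}(\Lambda)$, hence constant and so zero in positive degree. Substituting $D_{r}=\partial/\partial u_{r}$ into $\varphi_{r}=D_{r}\phi$ and expanding the operator then reproduces the stated sum over distinct tuples, completing the proof.
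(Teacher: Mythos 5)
Your reduction of the theorem to the duality relations $D_{r}u_{s}=\delta_{rs}$ is sound, and it matches the paper's own reduction (Corollary \ref{difcoord} is exactly your chain-rule equivalence); the frame/coframe setup over $\Lambda$ and the uniqueness argument are also fine. But the proposal has a genuine gap at precisely the point you flag yourself: the commutativity $[D_{r},D_{r'}]=0$ is never proven---you write ``granting commutativity'' and describe the remaining cases as a finite symmetric-function identity in which one should ``watch the antisymmetric part cancel.'' That identity \emph{is} the entire content of the theorem in your formulation: without it the forms $\omega_{s}$ need not be closed, and no primitives $u_{s}$ exist. Nothing in the proposal supplies it. Note also that you cannot appeal to any known commutativity of these operators: in the paper, the fact that the $D_{r}$'s commute is Corollary \ref{Weyl}, which is \emph{deduced} from the theorem you are trying to prove (once each $D_{r}$ is identified with $\partial_{u_{r}}$), so the logical order there is the reverse of yours, and an independent proof is required. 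With your choice of generators this deferred computation is genuinely painful: $D_{r}p_{m}$ is not a multiple of $p_{m-r}$ but rather $(-1)^{r-1}m\,r!\sum_{|I|=r}h_{m-r}(x_{I})$, so computing $[D_{r},D_{r'}]p_{m}$ forces you to analyze how each $D_{J}$ acts on the non-symmetric pieces $h_{m-r}(x_{I})$ according to the overlap $J \cap I$---a computation at least as heavy as the partition identity the paper verifies directly.

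The gap is repairable, and the repair shows where your route and the paper's meet. If you compute the action of $D_{r}$ on the elementary symmetric polynomials instead of on the power sums (this is the paper's Lemma \ref{DIexI} together with Proposition \ref{Ddoneh}, proved via the expansion of Lemma \ref{expeh} and an antisymmetry/degree argument), you obtain the shift formula $D_{r}e_{h}=\frac{(N-h+r)!}{(N-h)!}e_{h-r}$. Then $D_{r}D_{r'}e_{h}=\frac{(N-h+r+r')!}{(N-h)!}e_{h-r-r'}$ is visibly symmetric in $r$ and $r'$, and since $[D_{r},D_{r'}]$ is a derivation of $\Lambda$ vanishing on the generators $e_{1},\ldots,e_{N}$, it vanishes identically. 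With that lemma in hand, your integration-of-the-coframe argument (triangularity, polynomial Poincar\'{e} lemma, degree bookkeeping, uniqueness) does complete a correct proof, and one genuinely different from the paper's: the paper instead writes $u_{r}$ explicitly as $\sum_{\lambda \vdash r}(-1)^{l(\lambda)-1}\big(l(\lambda)-1\big)!\tilde{e}_{\lambda}\big/\prod_{h}m_{h}!$ and verifies $D_{d}u_{r}=\delta_{d,r}$ by a direct combinatorial cancellation, thereby producing a closed formula that is used repeatedly later (e.g.\ in Theorems \ref{annbyders} and \ref{ordwithN}), which your existence-by-integration argument does not yield. Finally, a small slip that does not affect the structure: with the distinct-tuple normalization of the statement, $D_{r}p_{r}=(-1)^{r-1}r\cdot r!\binom{N}{r}$, not $(-1)^{r-1}r\binom{N}{r}$; only its nonvanishing matters for your triangularity claim.
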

Note that for $N=2$ the variables from our theorem are $u_{1}=\frac{s}{2}$ and $u_{2}=-\frac{u}{4}$, for $\varphi_{u_{1}}$ to be $\phi_{x}+\phi_{y}$ (without the denominator 2) and $\varphi_{u_{2}}$ to be $\frac{\phi_{x}-\phi_{y}}{y-x}+\frac{\phi_{y}-\phi_{x}}{x-y}$, namely $2\frac{\phi_{x}-\phi_{y}}{y-x}$, with the correct sign and the extra multiple (coming from the indices in the operator being distinct, and not necessarily in increasing order). This will be the normalization with which our argument will work in the optimal way (see Remark \ref{norm} below). Note that our operators do not involve multiplications by non-constant functions the values of the variables, as desired. We shall also deduce the formulae for the derivatives at points where the values of some (or all) variables coincide, involving higher order derivatives of $\phi$---see Theorems \ref{expordp}, Proposition \ref{Ddallpt}, and Corollary \ref{totdiag} below.

The coordinates that our main theorem produces have many interesting properties: They can be expressed in terms of the ordinary Bell polynomials (Remark \ref{Bell}---see also Remark \ref{Bellexp} for a similar statement, with the exponential Bell polynomials, for the forms of our derivatives along the total diagonal); For $r\geq2$ they vanish to a high order along the total diagonal (Theorem \ref{annbyders}); They are characterized up to scalar multiples by not having non-trivial numerators in the derivatives with respect to them (Theorem \ref{uniqders}); And as functions of the number $N$ of variables, they are characterized by all of their mixed derivatives decaying faster than usual (Theorem \ref{ordwithN}). In fact, we conjecture that the more mixed the derivative is, the faster it decays with $N$---see Conjecture \ref{decay} below for the precise statement. Note that the Newton polynomials and power sum functions do show up as the limits of these symmetric functions as $N\to\infty$ (see Proposition \ref{Ntoinfty}), but not for the finite values of $N$.

In fact, for evaluating the derivatives at specific points in the symmetric space, we merge our coordinates with the simpler approach of differentiating variables according to their values, and this produces the simplest form of the derivatives around a given point---see Theorem \ref{coorgenpt} below.

\smallskip

The paper is divided into three sections. Section \ref{NSFDer} defines the differential operators and symmetric functions that we need at points where all the variables take distinct values, and Section \ref{PropCoord} investigates some of their properties. Then Section \ref{Diag} considers the limits of the resulting derivatives at points where variables coincide (for obtaining higher order derivatives of $\phi$), and presents the form of our differential operators at any point.

\smallskip

I am grateful to D. Barlet and Y. Kopeliovich for stimulating discussions on this topic, as well as to A. Zemel for suggesting the use of L'H\^{o}pital's Rule instead of the Taylor expansion for examining cases in an earlier draft, which helped obtain the expressions with higher derivatives in general.

\section{Normalized Symmetric Functions and Derivatives \label{NSFDer}}

Let $\mathbb{N}_{N}$ denote the set of integers $i$ with $1 \leq i \leq N$. We recall again that for $1 \leq h \leq N$, the expression $e_{h}:=\sum_{J\subseteq\mathbb{N}_{N},\ |J|=h}\prod_{i \in J}x_{i}$ is the $h$th elementary symmetric function in the variables $x_{i}$, $i\in\mathbb{N}_{N}$ (with $e_{0}$ being the constant function 1), which is homogenous of degree $h$. Thus, if $\lambda$ is a partition of an integer $r$ as the sum of the positive integers $h_{q}$, $1 \leq q \leq l$, with $h_{q} \leq N$ for every $q$, then $e_{\lambda}:=\prod_{q=1}^{l}e_{h_{q}}=\prod_{h=1}^{N}e_{h}^{m_{h}}$ (where $m_{h}$ is the multiplicity of $h$ in $\lambda$) is a symmetric polynomial in the $x_{i}$'s which is homogenous of degree $r$. The number $l$ of (non-zero) integers $h_{q}$ that participate in $\lambda$, which also equals $\sum_{h}m_{h}$, is the \emph{length} $l(\lambda)$ of the partition, and if we denote the \emph{conjugate}, or \emph{transpose}, partition of $\lambda$, obtained by reflecting its Ferrers diagram along the diagonal, by $\lambda^{t}$, then our assumption that the numbers $h_{q}$ participating in $\lambda$ do not exceed $N$ can be written as the inequality $l(\lambda^{t}) \leq N$. The fact that $\lambda$ is a partition of $r$ will henceforth be written as $\lambda \vdash r$. We then recall the following result from the theory of symmetric functions.
\begin{thm}
The products $\{e_{\lambda}\}_{\lambda \vdash r,\ l(\lambda^{t}) \leq N}$ form a basis, over $\mathbb{Z}$, for the symmetric homogenous polynomials of degree $r$ in the $x_{i}$'s having integral coefficients. The ring $\mathbb{Z}[x_{1},\ldots,x_{N}]^{S_{N}}$ of symmetric polynomials in these variables is generated, as a $\mathbb{Z}$-algebra, by $\{e_{h}\}_{h=1}^{N}$. \label{egen}
\end{thm}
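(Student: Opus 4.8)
The plan is to prove this by the classical route, handling generation (the fundamental theorem of symmetric polynomials) and the basis claim separately, and then upgrading the basis from $\mathbb{Q}$ to $\mathbb{Z}$.

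First I would establish generation by a leading-term reduction. Ordering the monomials in the $x_{i}$ lexicographically, let $f$ be a symmetric homogenous polynomial of degree $r$ with integral coefficients; its symmetry forces the lexicographically largest monomial $x_{1}^{a_{1}}\cdots x_{N}^{a_{N}}$ occurring in $f$ to have weakly decreasing exponents, so that $\mu:=(a_{1},\ldots,a_{N})$ is a partition of $r$ with $l(\mu)\leq N$. Since the lexicographic order is multiplicative, the product $e_{\mu^{t}}=\prod_{h}e_{h}^{a_{h}-a_{h+1}}$ (with $a_{N+1}:=0$) has leading monomial exactly $x_{1}^{a_{1}}\cdots x_{N}^{a_{N}}$ with coefficient $1$, so subtracting a suitable integral multiple of it yields a symmetric homogenous polynomial of degree $r$ with integral coefficients and strictly smaller leading monomial. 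As there are only finitely many monomials of degree $r$, iterating terminates and expresses $f$ as a $\mathbb{Z}$-linear combination of the products $e_{\lambda}$ with $\lambda\vdash r$; since $e_{h}=0$ for $h>N$, only partitions with $l(\lambda^{t})\leq N$ arise. This proves both that these $e_{\lambda}$ span the degree-$r$ part over $\mathbb{Z}$ and, collecting over all $r$, that $\{e_{h}\}_{h=1}^{N}$ generates $\mathbb{Z}[x_{1},\ldots,x_{N}]^{S_{N}}$ as a $\mathbb{Z}$-algebra.

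Next I would count and invoke linear independence. The monomial symmetric functions $m_{\mu}$ (the sums of the distinct monomials $x^{\alpha}:=\prod_{i}x_{i}^{\alpha_{i}}$ whose exponent vector $\alpha$ is a permutation of $\mu$), as $\mu$ ranges over partitions of $r$ with $l(\mu)\leq N$, visibly form a $\mathbb{Z}$-basis of the degree-$r$ symmetric polynomials, and conjugation $\lambda\mapsto\lambda^{t}$ is a bijection between partitions of $r$ with $l(\lambda^{t})\leq N$ and partitions of $r$ with at most $N$ parts. Hence the spanning set $\{e_{\lambda}\}$ has exactly the cardinality of a basis, so over $\mathbb{Q}$ it is already a basis. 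To obtain the sharper $\mathbb{Z}$-statement I would make the change of basis triangular: expanding the defining product and collecting terms, I would establish the classical relation $e_{\lambda^{t}}=m_{\lambda}+\sum_{\mu\prec\lambda}c_{\lambda\mu}m_{\mu}$, where $\prec$ denotes strict dominance and the $c_{\lambda\mu}$ are non-negative integers. Ordering the two families compatibly with dominance, the transition matrix from $\{e_{\lambda}\}$ to $\{m_{\mu}\}$ is then unitriangular with integer entries, hence invertible over $\mathbb{Z}$, which gives the $\mathbb{Z}$-basis claim.

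The main obstacle is precisely this triangularity: showing that among all monomials produced when the product $e_{\lambda^{t}}=\prod_{q}e_{(\lambda^{t})_{q}}$ is multiplied out---each factor $e_{h}$ contributing a squarefree product of $h$ distinct variables---the dominance-maximal monomial is exactly $x^{\lambda}$ and that it occurs with coefficient $1$. This is a purely combinatorial comparison of partial sums of exponent vectors, cleanest to organize by encoding the choices from the successive factors as a $0/1$ matrix (equivalently a filling of the Ferrers diagram of $\lambda^{t}$); once this is in hand, everything else is routine bookkeeping.
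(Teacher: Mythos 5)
Your proposal is correct, but there is no proof in the paper to measure it against: Theorem \ref{egen} is quoted there as a classical result, with the proof deferred to MacDonald's book \cite{[M]}, and the only argument the paper sketches is that the finite-variable statement descends from the infinite-variable one by setting $x_{i}=0$ for $i>N$ (which is exactly how the condition $l(\lambda^{t})\leq N$ and the convention $e_{h}=0$ for $h>N$ arise there). What you have written is essentially the standard textbook argument that this citation points to: lexicographic leading-term reduction for generation and $\mathbb{Z}$-spanning, the count of monomial symmetric functions $m_{\mu}$ with $l(\mu)\leq N$ via conjugation, and the dominance unitriangularity $e_{\lambda^{t}}=m_{\lambda}+\sum_{\mu\prec\lambda}c_{\lambda\mu}m_{\mu}$ for the integral basis claim; all of these steps are sound, and the combinatorial ``main obstacle'' you isolate (the $0/1$-matrix, or Gale--Ryser, argument) is indeed the crux and is classical. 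Two minor remarks. First, when you invoke the triangularity in $N$ variables, note that the terms $m_{\mu}$ with $\mu\prec\lambda$ and $l(\mu)>N$ simply vanish, which deletes entries of the transition matrix without disturbing unitriangularity, so the argument survives specialization; it is worth saying this explicitly since you work directly in finitely many variables rather than descending from the infinite-variable ring as the paper does. Second, your detour through $\mathbb{Q}$ is redundant: once you have $\mathbb{Z}$-spanning (from the reduction) by a set whose cardinality equals the rank of the free $\mathbb{Z}$-module of degree-$r$ integral symmetric polynomials, that set is automatically a $\mathbb{Z}$-basis, since a generating set of size equal to the rank of a finitely generated free module over a commutative ring is a basis; alternatively, the unitriangularity alone gives both independence and integral spanning, so either half of your second step could be dropped.
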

Of course, the more natural version of Theorem \ref{egen} is for symmetric functions of infinitely many variables, where the parameter $N$ is omitted, but for our purposes we shall need the version in finitely many variables. Our version is, of course, a consequence of the version with infinitely many variables, when one substitutes all the variables $x_{i}$ with $i>N$ to be 0. Then $e_{h}$ with $h \leq N$ gives the finite versions, while for $h>N$ we get $e_{h}=0$ (since there is no subset of size $h$ in a set of size $N$), and with it $e_{\lambda}=0$ when $l(\lambda^{t})>N$ because of the existence of a vanishing multiplier. We shall thus henceforth adopt the convention for the elementary symmetric function $e_{l}$ in any number $d$ of variables to give the expected expression when $l \leq d$, and to vanish in case $l>d$. For more on partitions and symmetric functions, including a proof of Theorem \ref{egen}, see, e.g., \cite{[M]}.

\smallskip

We shall soon differentiate the elementary symmetric functions, as well as their products from Theorem \ref{egen}, in various ways. For doing so we shall need the following presentation of these functions.
\begin{lem}
Let $I\subseteq\mathbb{N}_{N}$ be a subset of size $d$, and take some $1 \leq h \leq N$. Then we have the expansion \[e_{h}(x_{1},\ldots,x_{N})=\textstyle{\sum_{l=0}^{h}e_{l}\big(\{x_{i}\}_{i \in I}\big) \cdot e_{h-l}\big(\{x_{i}\}_{i \not\in I}\big)}.\] \label{expeh}
\end{lem}
For simplifying the notation, given a set of indices $J$ and an integer $r$, we can write $e_{r}(x_{J})$ for $e_{r}\big(\{x_{i}\}_{i \in J}\big)$. Then Lemma \ref{expeh} can be written as the equality $e_{h}(x_{\mathbb{N}_{N}})=\sum_{l=0}^{h}e_{l}(x_{I})e_{h-l}(x_{\mathbb{N}_{N} \setminus I})=\sum_{l=0}^{h}e_{l}(x_{I})e_{h-l}(x_{I^{c}})$ (where $I^{c}$ is the complement of $I$ in the natural ambient set $\mathbb{N}_{N}$). Moreover, if $h>d$ then the summand $e_{l}(x_{I})$ vanishes for every $d<l \leq h$, so that the sum is effectively taken over $0 \leq l\leq\min\{d,h\}$. However, we shall not be needing the latter fact, and allow for vanishing terms to show up in our expressions.

\begin{proof}
Every subset $J\subseteq\mathbb{N}_{N}$ of size $h$ is the disjoint union of $J \cap I$, of some size $0 \leq l \leq h$, and $J \cap I^{c}$, of size $h-l$. Conversely, fix $0 \leq l \leq h$, and then from every subset $K \subseteq I$ of size $l$, and every subset $H \leq I^{c}$ of size $h-l$, their union $H \cup K$ is a subset of size $h$ of $\mathbb{N}_{N}$. Moreover, the product $\prod_{i \in K}x_{i}$ shows up in $e_{l}(x_{I})$, the product $\prod_{i \in H}x_{i}$ appears in $e_{h-l}(x_{I^{c}})$, and multiplying them gives the summand $\prod_{i \in H \cup K}x_{i}$ from $e_{h}(x_{\mathbb{N}_{N}})$. Now, summing over $H$ and $K$ independently produces all the summands in $e_{h}(x_{\mathbb{N}_{N}})$ that are associated with subsets $J\subseteq\mathbb{N}_{N}$ of size $h$ such that $J \cap I$ has size $l$, and by summing over $0 \leq l \leq h$ we obtain all the sets $J$ of size $h$, and therefore the desired expression $e_{h}(x_{\mathbb{N}_{N}})$. This proves the lemma.
\end{proof}
Note that the summand with $l=0$ in Lemma \ref{expeh} is $e_{h}(x_{I^{c}})$ since $e_{0}(x_{I})=1$ (this matches the summands with empty $H$ in the proof).

\smallskip

When we let a differential operator $D$ act on $e_{h}=e_{h}(x_{\mathbb{N}_{N}})$, if $I$ is the set of variables whose respective derivatives appear in $D$, we can use the decomposition from Lemma \ref{expeh} for evaluating $De_{h}$, by letting $D$ only act on the summands $e_{l}(x_{I})$. Thus, for example, if $D=\partial_{i}$ then we can take $I=\{i\}$, write $e_{h}$ with $h\geq1$ as $x_{i}e_{h-1}(x_{\{i\}^{c}})+e_{h}(x_{\{i\}^{c}})$, and easily obtain that $\partial_{i}e_{h}=e_{h-1}(x_{\{i\}^{c}})$. As another example, let now $D$ be the operator $\frac{\partial_{i}-\partial_{j}}{x_{j}-x_{i}}$, like the one appearing in the Introduction. We then take $I$ to be $\{i,j\}$, and Lemma \ref{expeh} expresses $e_{h}$ for $h\geq2$ as $x_{i}x_{j}e_{h-2}(x_{I^{c}})+(x_{i}+x_{j})e_{h-1}(x_{I^{c}})+e_{h}(x_{I^{c}})$, since for our $I$ we have $e_{1}(x_{I})=x_{i}+x_{j}$ and $e_{2}(x_{I})=x_{i}x_{j}$. Now, our operator $D$ annihilates the last summand, and as its numerator $\partial_{i}-\partial_{j}$ takes $x_{i}+x_{j}$ to 0 and $x_{i}x_{j}$ to $x_{j}-x_{i}$, we deduce that $De_{h}=e_{h-2}(x_{I^{c}})$. While the algebra involved is more tedious, one can verify that in the three variables $x$, $y$, and $z$, the operator $\frac{\partial_{x}}{(y-x)(z-x)}+\frac{\partial_{y}}{(x-y)(z-y)}+\frac{\partial_{z}}{(x-z)(y-z)}$ takes the elementary symmetric functions $e_{0}(x,y,z)=1$, $e_{1}(x,y,z)=x+y+z$, and $e_{2}(x,y,z)=xy+xz+yz$ to 0 and sends the product $e_{3}(x,y,z)=xyz$ to 1, which means that if $I\subseteq\mathbb{N}_{N}$ is a set of size 3 and $D$ is the operator constructed from $I$ in that manner, then $De_{h}=e_{h-3}(x_{I^{c}})$ wherever $h\geq3$. We remark that we at this point we view these formulae as operations on polynomials, which produce polynomials and therefore we need not worry about the vanishing of denominators. The reader may, however, assume that we only consider points where all the variables take distinct values, and extend the results by continuity to the remaining points. This point becomes relevant in Corollary \ref{difcoord}, and is expanded on significantly in Section \ref{Diag} below.

In any case, we now wish to generalize these expressions.
\begin{lem}
Take a non-empty subset $I\subseteq\mathbb{N}_{N}$, and let $d>0$ be the size of $I$. Then the operator $D_{I}:=\sum_{i \in I}\partial_{i}\big/\prod_{i \neq j \in I}(x_{j}-x_{i})$ takes $e_{l}(x_{I})$ to 1 when $l=d$, and to 0 otherwise. \label{DIexI}
\end{lem}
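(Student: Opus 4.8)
The plan is to reduce the statement to a single algebraic identity among elementary symmetric functions and then prove that identity by a generating-function computation. Since both $D_{I}$ and every $e_{l}(x_{I})$ involve only the variables $x_{i}$ with $i \in I$, nothing outside $I$ enters the computation, and I may treat $d$ as the total number of variables at play. The case $l=0$ is immediate, as $e_{0}(x_{I})=1$ is constant and is annihilated by each $\partial_{i}$, matching the asserted value $0$ because $d>0$; so I assume $1 \leq l \leq d$ from now on.

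First I would evaluate a single derivative. Applying Lemma \ref{expeh} inside the variable set $I$ to the one-element subset $\{i\}$ gives $e_{l}(x_{I})=e_{l}(x_{I\setminus\{i\}})+x_{i}e_{l-1}(x_{I\setminus\{i\}})$, and differentiating in $x_{i}$ yields $\partial_{i}e_{l}(x_{I})=e_{l-1}(x_{I\setminus\{i\}})$, exactly as in the first example preceding the lemma. Consequently
\[
D_{I}e_{l}(x_{I})=\sum_{i\in I}\frac{e_{l-1}(x_{I\setminus\{i\}})}{\prod_{i\neq j\in I}(x_{j}-x_{i})},
\]
and the lemma becomes the purely algebraic assertion that this sum equals $1$ for $l=d$ and $0$ for $1 \leq l \leq d-1$.

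To establish this I would package the identity over all $l$ into one generating series in an auxiliary variable $t$. Since $\sum_{m\geq0}e_{m}(x_{I\setminus\{i\}})t^{m}=\prod_{i\neq j\in I}(1+x_{j}t)$, the series $\sum_{l\geq1}\big(D_{I}e_{l}(x_{I})\big)t^{l-1}$ equals $\sum_{i\in I}\prod_{i\neq j\in I}(1+x_{j}t)\big/\prod_{i\neq j\in I}(x_{j}-x_{i})$. The crux --- and the step at which a direct approach stumbles --- is that the numerator depends on $i$ only through the omitted factor, so the sum is not of the form to which Lagrange interpolation applies verbatim. I resolve this by writing $\prod_{i\neq j\in I}(1+x_{j}t)=P(t)/(1+x_{i}t)$ with $P(t):=\prod_{j\in I}(1+x_{j}t)$, which recasts the series as $P(t)\sum_{i\in I}\frac{1}{(1+x_{i}t)\prod_{i\neq j\in I}(x_{j}-x_{i})}$.

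Finally I would evaluate the remaining sum from the partial-fraction expansion
\[
\frac{1}{\prod_{j\in I}(w-x_{j})}=\sum_{i\in I}\frac{1}{(w-x_{i})\prod_{i\neq j\in I}(x_{i}-x_{j})}
\]
by substituting $w=-1/t$ and accounting for the sign $\prod_{i\neq j\in I}(x_{j}-x_{i})=(-1)^{d-1}\prod_{i\neq j\in I}(x_{i}-x_{j})$; the signs cancel and the sum collapses to $t^{d-1}/P(t)$, so the whole series becomes $P(t)\cdot t^{d-1}/P(t)=t^{d-1}$. Reading off the coefficient of $t^{l-1}$ then gives $1$ exactly when $l=d$ and $0$ for $1\leq l\leq d-1$, which is the claim. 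The automatic cancellation of the apparent poles is precisely what this generating-function route buys over a hands-on computation; an induction on $d$ by splitting off one variable is also possible, but messier.
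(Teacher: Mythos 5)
Your proof is correct, but it follows a genuinely different route from the paper's. You reduce the lemma, via $\partial_{i}e_{l}(x_{I})=e_{l-1}(x_{I\setminus\{i\}})$, to the algebraic identity $\sum_{i\in I}e_{l-1}(x_{I\setminus\{i\}})\big/\prod_{i \neq j \in I}(x_{j}-x_{i})=\delta_{l,d}$, and then prove all cases of $l$ at once by packaging them into the generating series $\sum_{l\geq1}\big(D_{I}e_{l}(x_{I})\big)t^{l-1}$, which the classical partial-fraction (Lagrange interpolation) identity, after the substitution $w=-1/t$ and the sign bookkeeping $\prod_{i \neq j \in I}(x_{j}-x_{i})=(-1)^{d-1}\prod_{i \neq j \in I}(x_{i}-x_{j})$, collapses to $t^{d-1}$; I checked the computation and the signs do cancel as you claim. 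The paper instead argues structurally: it observes that $D_{I}$ is a symmetric operator, so that $\prod_{i<j \in I}(x_{i}-x_{j}) \cdot D_{I}e_{l}$ is an anti-symmetric polynomial, hence divisible by the Vandermonde product; a degree count ($\binom{d}{2}+l-d<\binom{d}{2}$ when $l<d$) forces vanishing for $l<d$, and only the normalization $l=d$ requires an explicit computation, done by extracting the coefficient of the monomial $\prod_{j}x_{j}^{d-j}$. What your approach buys is uniformity: the vanishing and the normalization come out of a single closed-form evaluation, with no separate constant to chase, and the series even covers $l>d$ automatically (those terms vanish since $e_{l-1}(x_{I\setminus\{i\}})=0$). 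What the paper's approach buys is that it stays entirely inside the polynomial ring, using UFD divisibility rather than identities of rational functions, so it never needs the variables to be distinct; your argument works in the rational function field (or at points with distinct values), which is consistent with the paper's stated convention but is a genuine hypothesis of the partial-fraction identity. One cosmetic remark: your final sentence only reads off $1\leq l\leq d$, but the statement's ``otherwise'' also includes $l>d$, which you should dispatch in passing by the convention $e_{l}(x_{I})=0$ for $l>d$ (or by noting your series identity already covers it).
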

Using the Kronecker $\delta$-symbol $\delta_{l,d}$, which equals 1 when $l=d$ and 0 otherwise, we can write the result of Lemma \ref{DIexI} as $D_{I}e_{l}(x_{I})=\delta_{l,d}$ for every $l$ and $I$ of size $d$. The analogue of $D_{I}$ in case $I$ is the empty set should not be the corresponding empty sum (which thus vanishes), but rather the identity operator, for it to take the function $e_{0}(x_{\emptyset})=1$ to 1, and not 0.

\begin{proof}
If $d=1$ then $I=\{i\}$, $D_{I}=\partial_{i}$ since the product is empty, and as we have $e_{0}(x_{I})=1$ and $e_{1}(x_{I})=x_{i}$, the result is clear. We therefore assume that $d\geq2$, where permuting the variables can be non-trivial. We then observe that the operator $D_{I}$ is invariant under applying a permutation to the elements of $I$, and is thus symmetric. Therefore if we let it act on a symmetric polynomial in the variables $\{x_{i}\}_{i \in I}$, and then multiply by the anti-symmetric expression $\prod_{i<j \in I}(x_{i}-x_{j})$ (according to the order of elements in $I$ as a subset of $\mathbb{N}_{N}$, say), then we obtain an anti-symmetric polynomial in these variables.

Now, an anti-symmetric polynomial in $\{x_{i}\}_{i \in I}$ vanishes wherever $x_{i}=x_{j}$, so that its zero set contains that of the irreducible polynomial $x_{i}-x_{j}$, making it divisible by $x_{i}-x_{j}$ in the UFD $\mathbb{C}[\{x_{i}\}_{i \in I}]$. Since this is valid for every pair $\{i,j\} \subseteq I$ with $i<j$, such a polynomial will be divisible by $\prod_{i<j \in I}(x_{i}-x_{j})$. In particular, if it does not vanish then its degree is at least the degree $\binom{d}{2}$ of the latter multiplier.

Consider now the action of $\prod_{i<j \in I}(x_{i}-x_{j}) \cdot D_{I}$ on the function $e_{l}(x_{I})$. Since the product contains every denominator from the definition of $D_{I}$, and every such denominator is homogenous of degree $d-1$, we deduce that this differential operator is the sum of $d$ operators, each of which is a partial derivative with respect to one variable multiplied by a homogenous polynomial of degree $\binom{d-1}{2}$. Letting such an operator act on $e_{l}$, of degree $l$, decreases the degree by 1 (via the derivatives), and then multiplies by a polynomial of degree $\binom{d-1}{2}$. The result is therefore an anti-symmetric polynomial of degree $\binom{d-1}{2}+l-1=\binom{d}{2}+l-d$. As this degree is smaller than $\binom{d}{2}$ when $l<d$, we deduce the asserted vanishing.

Finally, when $l=d$ the degree is $\binom{d}{2}$, meaning that $\prod_{i<j \in I}(x_{i}-x_{j}) \cdot D_{I}e_{d}$ is a constant multiple of $\prod_{i<j \in I}(x_{i}-x_{j})$. For evaluating the constant, which is the value of $D_{I}e_{d}$, assume that $I=\mathbb{N}_{d}\subseteq\mathbb{N}_{n}$ for easing the notation, and then in the anti-symmetric product $\prod_{i<j \in I}(x_{i}-x_{j})$, the monomial $\prod_{j=1}^{d}x_{j}^{d-j}$ shows up with a coefficient of 1. The constant value of $D_{I}e_{d}$ is therefore the coefficient with which this monomial shows up in $\prod_{i<j \in I}(x_{i}-x_{j}) \cdot D_{I}e_{d}$. But $x_{d}$ does not divide our monomial, and as $e_{d}=\prod_{j=1}^{d}x_{j}$, letting each $\partial_{i}$ with $i<d$ act on it and multiplying by some polynomial only gives combinations of monomials that are divisible by $x_{d}$. Thus our monomial only shows up in the summand associated with $i=d$ in $D_{I}$, where the derivative is $\prod_{j=1}^{d-1}x_{j}$ and the multiplying polynomial is $\prod_{i<j<d}(x_{i}-x_{j})$ after canceling. As our monomial appears with the coefficient 1 in the resulting product, the last value is also the desired one. This proves the lemma.
\end{proof}

We can now obtain the following consequence.
\begin{cor}
The operator $D_{I}$ from Lemma \ref{DIexI} sends the symmetric function $e_{h}=e_{h}(x_{\mathbb{N}_{N}})$ with $h \geq d$ to $e_{h-d}(x_{I^{c}})$. \label{ehDI}
\end{cor}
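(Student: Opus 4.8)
The plan is to combine the additive decomposition of Lemma \ref{expeh} with the evaluation in Lemma \ref{DIexI}, the only new ingredient being the observation that $D_{I}$ differentiates only in the variables indexed by $I$. First I would apply Lemma \ref{expeh} with the given set $I$ of size $d$ to write
\[e_{h}=e_{h}(x_{\mathbb{N}_{N}})=\sum_{l=0}^{h}e_{l}(x_{I})\,e_{h-l}(x_{I^{c}}).\]
Since $D_{I}=\sum_{i \in I}\partial_{i}\big/\prod_{i \neq j \in I}(x_{j}-x_{i})$ is a linear combination of the operators $\partial_{i}$ with $i \in I$ only, each factor $e_{h-l}(x_{I^{c}})$---a polynomial in the variables $\{x_{i}\}_{i \in I^{c}}$ alone---is constant as far as $D_{I}$ is concerned. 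Hence $D_{I}$ is linear over the ring of polynomials in the $x_{I^{c}}$, and I can pull these factors out past $D_{I}$ term by term.

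Carrying this out, I would obtain
\[D_{I}e_{h}=\sum_{l=0}^{h}e_{h-l}(x_{I^{c}}) \cdot D_{I}e_{l}(x_{I}).\]
Now Lemma \ref{DIexI} gives $D_{I}e_{l}(x_{I})=\delta_{l,d}$ for every $l$, so every summand with $l \neq d$ vanishes, and the single surviving term $l=d$ (which indeed lies in the summation range $0 \leq l \leq h$ precisely because $h \geq d$) yields $e_{h-d}(x_{I^{c}})$. This is exactly the claimed identity.

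The computation has no real obstacle: all the genuine work---showing that $D_{I}$ annihilates $e_{l}(x_{I})$ for $l<d$ on degree grounds and normalizes $e_{d}(x_{I})$ to $1$---was already carried out in Lemma \ref{DIexI}, while the splitting of $e_{h}$ across $I$ and $I^{c}$ is Lemma \ref{expeh}. The one point deserving a word of care is the interchange just described, namely that because $D_{I}$ involves only derivatives in the $x_{i}$ with $i \in I$, it treats each $e_{h-l}(x_{I^{c}})$ as a scalar; this is what reduces the action on the product to the action on the $I$-part alone and makes the two lemmas fit together.
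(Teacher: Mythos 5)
Your proof is correct and follows essentially the same route as the paper: decompose $e_{h}$ via Lemma \ref{expeh}, observe that $D_{I}$ differentiates only in the variables indexed by $I$ (so the factors $e_{h-l}(x_{I^{c}})$ pass through as scalars), and then invoke $D_{I}e_{l}(x_{I})=\delta_{l,d}$ from Lemma \ref{DIexI} to isolate the $l=d$ term. Your write-up just makes explicit the module-linearity step that the paper states in one line.
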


\begin{proof}
Write $e_{h}$ as in Lemma \ref{expeh}, and as $D_{I}$ contains no derivatives with respect to variables $x_{i}$ with $i \not\in I$, it only acts on the multipliers $e_{l}(x_{I})$ from that lemma. The result thus follows directly from Lemma \ref{DIexI}. This proves the corollary.
\end{proof}
Corollary \ref{ehDI} is also valid for $h<d$, if we define $e_{r}$ with negative $r$ to be 0. This is also visible in the examples calculated explicitly above. Moreover, the convention that $D_{I}=\operatorname{Id}$ (and not 0) when $I$ is empty is indeed the natural extension for this case, as this is the operator taking $e_{h}$ to $e_{h-0}(x_{I^{c}})$ for empty $I$. However, we shall not need this operator in what follows.

\smallskip

We are interested in symmetric differential operators in our variables $x_{i}$, $i\in\mathbb{N}_{N}$. The operator $D_{I}$ associated via Lemma \ref{DIexI} to a subset $I$ was seen to be symmetric, but only in the variables $\{x_{i}\}_{i \in I}$, and a permutation on $\{x_{i}\}_{i\in\mathbb{N}_{N}}$ that does not preserve $I$ clearly no longer leaves the operator $D_{I}$ invariant, but takes it to the operator of another set of size $d$. We shall use $|I|$ for the size of $I$ throughout this paper, and we consider the following symmetric operators and their action.
\begin{prop}
For every $d>0$, the operator $D_{d}:=d!\sum_{I\subseteq\mathbb{N}_{N},\ |I|=d}D_{I}$ is a symmetric differential operator, which takes $e_{h}$ with $h \geq d$ to $\frac{(N-h+d)!}{(N-h)!}e_{h-d}$ and $e_{h}$ with $h<d$ to 0. \label{Ddoneh}
\end{prop}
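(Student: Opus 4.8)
The plan is to treat the two assertions separately. For the symmetry, I would note that each individual operator $D_{I}$ is, by Lemma \ref{DIexI} and the discussion preceding it, symmetric only in the variables $\{x_{i}\}_{i \in I}$, and that applying an arbitrary permutation $\sigma \in S_{N}$ to all the variables carries $D_{I}$ to $D_{\sigma(I)}$. Since $\sigma$ permutes the subsets of $\mathbb{N}_{N}$ of size $d$ among themselves, the full sum $\sum_{|I|=d}D_{I}$ is invariant under $\sigma$, and hence so is $D_{d}$. This settles the symmetry with no computation.

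For the action on $e_{h}$, the starting point is Corollary \ref{ehDI}, which gives $D_{I}e_{h}=e_{h-d}(x_{I^{c}})$ whenever $h \geq d$ (and $0$ when $h<d$, under the convention that $e_{r}$ with negative $r$ vanishes). Thus $D_{d}e_{h}=d!\sum_{|I|=d}e_{h-d}(x_{I^{c}})$, and the whole problem reduces to evaluating this sum over the $d$-subsets $I$. The case $h<d$ is then immediate, as every summand vanishes, so the remaining work is entirely in the regime $h \geq d$.

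The heart of the argument is a counting step. I would expand each $e_{h-d}(x_{I^{c}})$ as the sum of the monomials $\prod_{i \in J}x_{i}$ over the subsets $J\subseteq I^{c}$ of size $h-d$, then interchange the order of summation and ask, for a fixed monomial indexed by such a $J$, how many subsets $I$ of size $d$ produce it. These are exactly the $I$ disjoint from $J$, that is, the $d$-subsets of the complement $\mathbb{N}_{N}\setminus J$, which has size $N-(h-d)$; there are $\binom{N-h+d}{d}$ of them. Hence $\sum_{|I|=d}e_{h-d}(x_{I^{c}})=\binom{N-h+d}{d}e_{h-d}$, and multiplying by $d!$ collapses the binomial coefficient into the stated ratio $\frac{(N-h+d)!}{(N-h)!}$. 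The one point that demands care is that this multiplicity $\binom{N-h+d}{d}$ is the same for every monomial, independently of which $J$ one looks at; it is precisely this uniformity that allows the double sum to be pulled out as a scalar multiple of $e_{h-d}$, and it is the step I would verify most carefully, since the count depends only on $|J|=h-d$ and not on the particular indices appearing in $J$.
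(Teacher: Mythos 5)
Your proposal is correct and follows essentially the same route as the paper's own proof: symmetry via the permutation action on $d$-subsets, reduction to Corollary \ref{ehDI}, and then the double-counting of monomials $\prod_{i \in J}x_{i}$ by the number of $d$-subsets $I$ disjoint from $J$, giving $d!\binom{N-h+d}{d}=\frac{(N-h+d)!}{(N-h)!}$. The uniformity point you flag (the count depending only on $|J|=h-d$) is exactly the observation the paper makes as well, so there is nothing to add.
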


\begin{proof}
The symmetry of $D_{d}$ is obvious from the fact that permutations on $\{x_{i}\}_{i\in\mathbb{N}_{N}}$ permute the subsets of size $d$ in $\mathbb{N}_{N}$. Now, we saw in Corollary \ref{ehDI} that each $D_{I}$ takes $e_{h}$ to $e_{h-d}(x_{I^{c}})$, which equals $\sum_{H \leq I^{c},\ |H|=h-d}\prod_{i \in H}x_{i}$ in case $h \geq d$ and 0 otherwise. Thus $D_{d}e_{h}$ indeed vanishes when $h<d$, and if $h \geq d$ then in the image of $e_{h}$ under $D_{d}$, the expression $\prod_{i \in H}x_{i}$ for some subset $H\subseteq\mathbb{N}_{N}$ of size $h-d$ appears with one multiple of $d!$ for every subset $I\subseteq\mathbb{N}_{N}$ of size $d$ such that $H \subseteq I^{c}$. But as the latter condition is equivalent to $I \subseteq H^{c}$, it means that the number of such sets $I$ is obtained by choosing $d$ elements from the set $H^{c}$, of size $N-h+d$. As $d!$ times the number $\binom{N-h+d}{d}$ of such choices equals $\frac{(N-h+d)!}{(N-h)!}$ (independently of $H$, of course), and $\sum_{H\subseteq\mathbb{N}_{N},\ |H|=h-d}\prod_{i \in H}x_{i}$ is $e_{h-d}$ by definition, this proves the proposition.
\end{proof}
The reason for the factor $d!$ in the definition of $D_{d}$ in Proposition \ref{Ddoneh}, which can be considered as summing over $I$ as ordered $d$-tuples of distinct indices rather than subsets, will soon be seen to be more convenient. The convention of $e_{r}=0$ for $r<0$ makes the statement of Proposition \ref{Ddoneh} uniform for all $h$.

\smallskip

The coefficients from Proposition \ref{Ddoneh} depend on the number $N$ of variables. We thus renormalize each of our functions $e_{h}$ with $h\in\mathbb{N}\cup\{0\}$ by setting $\tilde{e}_{h}:=e_{h}\big/h!\binom{N}{h}=\frac{(N-h)!e_{h}}{N!}$, which have the property that when all the $x_{i}$'s take the same value $a$, this function attains $\frac{a^{h}}{h!}$, a value that no longer depends on $N$. Note that this indeed holds also for $h=0$, and we wrote the union $\mathbb{N}\cup\{0\}$ since it will be more convenient to later in this paper to only include positive integers in $\mathbb{N}$. We immediately obtain the following consequence.
\begin{cor}
The operator $D_{d}$ from Proposition \ref{Ddoneh} sends $\tilde{e}_{h}$ with $0 \leq h \leq N$ to $\tilde{e}_{h-d}$, which vanishes when $h<d$. \label{actnoreh}
\end{cor}

\begin{proof}
By Proposition \ref{Ddoneh}, $D_{d}$ takes $\tilde{e}_{h}=\frac{(N-h)!e_{h}}{N!}$ to $\frac{(N-h)!}{N!}\cdot\frac{(N-h+d)!}{(N-h)!}e_{h-d}$ (and to 0 when $h<d$), which indeed equals $\frac{(N-h+d)!e_{h-d}}{N!}=\tilde{e}_{h-d}$ as desired. This proves the corollary.
\end{proof}
Note that the condition $h \leq N$ is important in Corollary \ref{actnoreh}, since for an index $N<h \leq N+d$ there is no function $e_{h}$ that is mapped to $e_{h-d}$ via $D_{d}$. In Proposition \ref{Ddoneh} this issue did not show up, since for such $h$ the numerator $(N-h+d)!$ is finite (a factorial of a non-negative integer) while the denominator $(N-h)!$ is infinite as the factorial of a negative integer, meaning that $e_{h-d}$ is multiplied by 0 to give the correct value of $D_{d}e_{h}=D_{d}0$ for $e_{h}$ with $h>N$. Moreover, without the coefficient $d!$ in the definition of $D_{d}$, Corollary \ref{actnoreh} would have contained it, which is the reason for our normalization.

We can also normalize the products showing up in Theorem \ref{egen}, and set the expression $\tilde{e}_{\lambda}$ associated with a partition $\lambda \vdash r$ with $l(\lambda^{t}) \leq N$ to be the product $\prod_{q=1}^{l}\tilde{e}_{h_{q}}=\prod_{h=1}^{N}\tilde{e}_{h}^{m_{h}}$. It is a multiple of the original $\tilde{e}_{\lambda}$, which equals $a^{r}\big/\prod_{q=1}^{l}h_{q}!=a^{r}\big/\prod_{h=1}^{N}h!^{m_{h}}$ when $x_{i}=a$ for all $i\in\mathbb{N}_{N}$, and the spanning property and ring generation property from that theorem hold for these expressions as well, when we work over $\mathbb{Q}$. For describing the action of $D_{d}$ on these expressions we consider a partition $\lambda$ of $r$ and some $d \leq h \leq N$ that shows up in $\lambda$ (i.e., with $m_{h}\geq1$), and we define $\lambda-d\varepsilon_{h}$ to be the partition of $r-d$ that is obtained by replacing one instance of $h$ in $\lambda$ by $h-d$. We thus obtain the following result.
\begin{cor}
For every $1 \leq d \leq N$ and every partition $\lambda \vdash r$ with $l(\lambda^{t}) \leq N$, the operator $D_{d}$ takes $\tilde{e}_{\lambda}$ to $\sum_{h=d}^{N}m_{h}\tilde{e}_{\lambda-d\varepsilon_{h}}$. \label{difepart}
\end{cor}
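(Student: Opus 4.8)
The plan is to exploit the fact that $D_{d}$ is a first-order differential operator, and is therefore a derivation. Indeed, unwinding the definition in Proposition \ref{Ddoneh}, each $D_{I}$ is a sum $\sum_{i \in I}\partial_{i}\big/\prod_{i \neq j \in I}(x_{j}-x_{i})$ of first partial derivatives with rational-function coefficients, so $D_{d}=d!\sum_{|I|=d}D_{I}$ has the shape $\sum_{i=1}^{N}c_{i}\partial_{i}$ for suitable $c_{i}\in\mathbb{C}(x_{1},\ldots,x_{N})$. Any operator of this form obeys the Leibniz product rule $D_{d}(fg)=(D_{d}f)g+f(D_{d}g)$, the computation being carried out in the field of rational functions; by the earlier results the relevant outputs are in fact polynomials, so no vanishing-denominator issue arises.

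With this in hand, I would first write $\tilde{e}_{\lambda}=\prod_{h=1}^{N}\tilde{e}_{h}^{m_{h}}$ and apply the product rule for derivations to obtain \[D_{d}\tilde{e}_{\lambda}=\sum_{h=1}^{N}m_{h}\,\tilde{e}_{h}^{m_{h}-1}(D_{d}\tilde{e}_{h})\prod_{h' \neq h}\tilde{e}_{h'}^{m_{h'}}.\] Next I would invoke Corollary \ref{actnoreh}: since $l(\lambda^{t}) \leq N$ forces every part $h$ with $m_{h}\geq1$ to satisfy $h \leq N$, we have $D_{d}\tilde{e}_{h}=\tilde{e}_{h-d}$ for each such $h$, while $D_{d}\tilde{e}_{h}=0$ when $h<d$. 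Substituting, the term indexed by $h$ becomes $m_{h}\,\tilde{e}_{h-d}\,\tilde{e}_{h}^{m_{h}-1}\prod_{h' \neq h}\tilde{e}_{h'}^{m_{h'}}$. I would then recognize this product as $\tilde{e}_{\lambda-d\varepsilon_{h}}$, since passing from $\lambda$ to $\lambda-d\varepsilon_{h}$ lowers the multiplicity of $h$ by one and raises that of $h-d$ by one, which is exactly the effect of replacing one factor $\tilde{e}_{h}$ by $\tilde{e}_{h-d}$. The terms with $h<d$ drop out because $D_{d}\tilde{e}_{h}=0$ there, and the terms with $m_{h}=0$ vanish automatically, so the surviving sum runs over $d \leq h \leq N$ and equals $\sum_{h=d}^{N}m_{h}\tilde{e}_{\lambda-d\varepsilon_{h}}$, as claimed.

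The only genuine content is the observation that $D_{d}$ is a derivation; once that is secured, the statement is pure bookkeeping with the product rule and Corollary \ref{actnoreh}, and I expect no real obstacle. The single place deserving a word of care is the boundary case $h-d=0$, where $\tilde{e}_{0}=1$ and the part $h-d$ is simply omitted from $\lambda-d\varepsilon_{h}$; this is consistent with the convention that zero parts do not appear in a partition, so it requires no separate argument.
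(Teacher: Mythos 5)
Your proof is correct and follows essentially the same route as the paper's: both rest on the observation that $D_{d}$ is a first-order operator, hence a derivation obeying Leibniz' rule on the product $\tilde{e}_{\lambda}=\prod_{h}\tilde{e}_{h}^{m_{h}}$, followed by the substitution $D_{d}\tilde{e}_{h}=\tilde{e}_{h-d}$ (vanishing for $h<d$) from Corollary \ref{actnoreh} and the identification of the resulting product with $\tilde{e}_{\lambda-d\varepsilon_{h}}$. Your extra remarks on the rational-function coefficients and the boundary case $h=d$ are fine but add nothing beyond the paper's argument.
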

In fact, since the maximal number $h$ for which $m_{h}>0$ is $l(\lambda^{t})$, we can take the sum to be only up to this value. In particular when $l(\lambda^{t})<d$, i.e., when all the multipliers constructing $\tilde{e}_{\lambda}$ are $\tilde{e}_{h}$ with $h<d$, Corollary \ref{difepart} yields $D_{d}\tilde{e}_{\lambda}=0$.

\begin{proof}
Since $D_{d}$ is a differential operator of order 1, its action on the product $\tilde{e}_{\lambda}$ is expressed via Leibniz' rule, which yields $\sum_{h=1}^{N}m_{h}\tilde{e}_{h}^{m_{h}-1}\prod_{g \neq h}\tilde{e}_{g}^{m_{g}} \cdot D_{d}\tilde{e}_{h}$. The summands with $h<d$ vanish, and if $h \geq d$ then the last multiplier is $\tilde{e}_{h}$ by Corollary \ref{actnoreh}. As for every $d \leq h \leq N$ this yields the multiplier $m_{h}$ times the product that defines $\tilde{e}_{\lambda-d\varepsilon_{h}}$, this proves the corollary.
\end{proof}
Note that Corollary \ref{difepart} is no longer valid for $d=0$ with the identity operator, since the latter operator does not satisfy the Leibniz' rule (and indeed, the asserted value of $D_{d}\tilde{e}_{\lambda}$ would become $\tilde{e}_{\lambda}$ times $\sum_{h=1}^{N}m_{h}=l(\lambda)$, rather than 1).

We remark that the if $I$ is a set of size $d$ as above then $\prod_{i \in I}\partial_{i}$ also takes each $e_{h}$ to $e_{h-d}(x_{I^{c}})$, as in Corollary \ref{ehDI}, and can thus be used for establishing counterparts for Proposition \ref{Ddoneh} and Corollary \ref{actnoreh}. However, the main difference shows up in Corollary \ref{difepart}, where the proof uses the fact that $D_{I}$ or $D_{d}$ are operators of degree 1 (at a generic point---the results in Section \ref{Diag} consider general points), which thus satisfies Leibniz' Rule, a property that is not shared by the product in question when $d\geq2$. For example, when $I=\{i,j\}$ of size 2, $D_{I}$ takes the product $e_{h}e_{g}$ to $e_{h-2}(x_{I^{c}})e_{g}+e_{h}e_{g-2}(x_{I^{c}})$, while the action of $\partial_{i}\partial_{j}$ will produce these two terms plus $e_{h-1}(x_{\{i\}^{c}})e_{g-1}(x_{\{j\}^{c}})+e_{h-1}(x_{\{j\}^{c}})e_{g-1}(x_{\{i\}^{c}})$. This is the reason why certain proofs along the total diagonal does not simplify in an obvious manner---see Remark \ref{nocomp} below.

\smallskip

We can now define our symmetric coordinates, and prove that they are the only ones with the property that we seek.
\begin{thm}
For every partition $\lambda \vdash r$ of some $1 \leq r \leq N$, let $m_{h}$ be the multiplicity with which the integer $1 \leq h \leq r$ appears in $\lambda$, and let $l(\lambda)$ be its length $\sum_{h=1}^{r}m_{h}$. We then set \[u_{r}:=\sum_{\lambda \vdash r}(-1)^{l(\lambda)-1}\frac{\big(l(\lambda)-1\big)!}{\prod_{h=1}^{r}m_{h}!}\tilde{e}_{\lambda},\] where $\tilde{e}_{\lambda}$ is the normalized product appearing in Corollary \ref{difepart}. Then $u_{r}$ is the only symmetric polynomial in $x_{i}$, $i\in\mathbb{N}_{N}$ that is homogenous of degree $r$ and satisfies the property that $D_{d}u_{r}$ is the Kronecker $\delta$-symbol $\delta_{d,r}$. \label{coorddef}
\end{thm}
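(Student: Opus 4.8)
The plan is to recognize the defining formula for $u_{r}$ as the degree-$r$ part of a logarithm, and then to exploit the fact that each $D_{d}$ is a derivation. Introduce the generating series $E(t):=\sum_{h=0}^{N}\tilde{e}_{h}t^{h}$ (with $\tilde{e}_{0}=1$) and $U(t):=\sum_{r\geq1}u_{r}t^{r}$, both regarded as power series in $t$ with coefficients in the ring of symmetric polynomials over $\mathbb{Q}$. The first step is to verify that $U(t)=\log E(t)$. Expanding $\log\big(1+\sum_{h\geq1}\tilde{e}_{h}t^{h}\big)=\sum_{k\geq1}\frac{(-1)^{k-1}}{k}\big(\sum_{h\geq1}\tilde{e}_{h}t^{h}\big)^{k}$ and collecting the ordered $k$-tuples $(h_{1},\ldots,h_{k})$ whose underlying partition is a fixed $\lambda$ with $l(\lambda)=k$ (there are exactly $k!/\prod_{h}m_{h}!$ of them) reproduces the coefficient $(-1)^{l(\lambda)-1}(l(\lambda)-1)!/\prod_{h}m_{h}!$ of $\tilde{e}_{\lambda}$; hence $[t^{r}]\log E(t)=u_{r}$ for $1\leq r\leq N$, i.e.\ $E(t)=\exp U(t)$ up to the relevant degree.

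For existence, I would apply $D_{d}$ to this relation. Since $D_{d}$ is a first-order differential operator on $\mathbb{C}[x_{1},\ldots,x_{N}]$ it is a derivation, and by Corollary \ref{actnoreh} it preserves symmetric polynomials with $D_{d}\tilde{e}_{h}=\tilde{e}_{h-d}$; extend it coefficient-wise to series in $t$, fixing $t$. Then $D_{d}E(t)=\sum_{h}\tilde{e}_{h-d}t^{h}$, and a direct degree comparison (using $\tilde{e}_{h}=0$ for $h>N$) gives the exact congruence $D_{d}E(t)\equiv t^{d}E(t)\pmod{t^{N+1}}$, the two sides agreeing in every degree $\leq N$ and differing only in degrees exceeding $N$. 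On the other hand, differentiating $E=\exp U$ by the Leibniz rule yields $D_{d}E=E\cdot D_{d}U$. As $E$ has constant term $1$ and is therefore a unit in the power series ring, I may divide to obtain $D_{d}U(t)\equiv t^{d}\pmod{t^{N+1}}$. Reading off the coefficient of $t^{r}$ for $1\leq r\leq N$ gives $D_{d}u_{r}=\delta_{d,r}$, which is the asserted property (for $d>r$ it also holds trivially on degree grounds).

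For uniqueness, I would first record the triangularity $u_{r}=\tilde{e}_{r}+(\text{a polynomial in }\tilde{e}_{1},\ldots,\tilde{e}_{r-1})$, valid because every partition of $r$ other than $(r)$ has length at least $2$ and all parts strictly below $r$. By induction this shows that $u_{1},\ldots,u_{N}$ generate the same ring as $\tilde{e}_{1},\ldots,\tilde{e}_{N}$, namely the symmetric polynomials over $\mathbb{Q}$ (Theorem \ref{egen}), and that the $u_{r}$ are algebraically independent. Consequently the derivation $D_{d}$, which by the previous paragraph sends each free generator $u_{j}$ to $\delta_{d,j}$, must coincide with $\partial/\partial u_{d}$ on the whole ring. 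Now if $v$ is any symmetric homogeneous polynomial of degree $r$ with $D_{d}v=\delta_{d,r}$ for all $d$, then $w:=v-u_{r}$ is homogeneous of degree $r$ and satisfies $\partial w/\partial u_{d}=D_{d}w=0$ for every $d$; hence $w$ is constant, and being homogeneous of positive degree it vanishes, so $v=u_{r}$.

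I expect the main obstacle to be the finite-variable truncation: unlike the classical infinite-variable identity $D_{d}E=t^{d}E$, here equality holds only modulo $t^{N+1}$, so the argument must carefully track degrees and invoke that only the coefficients up to $t^{N}$ (equivalently $r\leq N$) are needed. The only other point requiring genuine care is the combinatorial verification that the stated coefficients assemble $\log E$; once these two checks are in place, the reduction $D_{d}=\partial/\partial u_{d}$ makes both existence and uniqueness essentially immediate. (One could alternatively bypass the logarithm and prove $D_{d}u_{r}=\delta_{d,r}$ directly from Corollary \ref{difepart} by a telescoping identity over partitions, but the generating-function route packages that computation cleanly.)
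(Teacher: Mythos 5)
Your proof is correct, but it follows a genuinely different route from the paper's. For existence, the paper works directly with Corollary \ref{difepart}: it fixes a target partition $\rho \vdash r-d$, collects every contribution to the coefficient of $\tilde{e}_{\rho}$ in $D_{d}u_{r}$, and shows these telescope to $\sum_{g}\mu_{g}-l(\rho)=0$. Your generating-function argument packages exactly this cancellation into the formal identity $D_{d}\log E = D_{d}E/E$: the congruence $D_{d}E(t)\equiv t^{d}E(t)\pmod{t^{N+1}}$ is precisely Corollary \ref{actnoreh}, and your explicit handling of the truncation (the two sides differing in degrees above $N$) is exactly the caveat the paper flags after that corollary, so the degree bookkeeping is sound. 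Your identification of the coefficients of $\log E$ with those in the statement is also the content of the paper's Remark \ref{Bell} (ordinary Bell polynomials), and it makes the Newton-identity limit of Proposition \ref{Ntoinfty} transparent, though the paper never uses generating functions in this proof. For uniqueness the structural difference is sharper: the paper determines the coefficients $c_{\lambda}$ one at a time by decreasing induction on the largest part $l(\lambda^{t})$, an argument that does not presuppose existence (and the paper remarks afterwards that this induction alone cannot yield existence); you instead prove existence first and then exploit it, observing that triangularity makes $\{u_{r}\}_{r=1}^{N}$ free generators, that the derivation $D_{d}$ must therefore agree with $\partial/\partial u_{d}$ on all symmetric polynomials (the same chain-rule observation that the paper defers to Corollary \ref{difcoord} and Corollary \ref{Weyl}), and that a homogeneous element of positive degree killed by all the $\partial/\partial u_{d}$ vanishes. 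Your uniqueness argument is shorter and more conceptual, at the cost of being non-constructive and logically dependent on the existence half, whereas the paper's induction determines the coefficients explicitly.
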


\begin{rmk}
Recall that the partial ordinary Bell polynomial $\hat{B}_{r,t}$ of some variables $z_{h}$, $1 \leq h \leq r-t+1$ can be expressed as $\sum_{\lambda \vdash r,\ l(\lambda)=t}t!\prod_{h=1}^{r-t+1}z_{h}^{m_{h}}/m_{h}!$, and it is homogenous of degree $t$ when all the variables have degree 1 (when $z_{h}$ has degree $h$, the homogeneity degree is $r$), so that inverting the signs of the variables multiplies $\hat{B}_{r,t}$ by $(-1)^{t}$. As $\tilde{e}_{\lambda}$ is the product $\prod_{h=1}^{r}\tilde{e}_{h}^{m_{h}}$, we deduce that the coordinate $u_{r}$ from Theorem \ref{coorddef} can be written in short as $-\sum_{t=1}^{r}\hat{B}_{r,t}(-\tilde{e}_{1},\ldots,-\tilde{e}_{r-t+1})/t$. A very similar observation will be useful for Proposition \ref{Ntoinfty} below. \label{Bell}
\end{rmk}

\begin{proof}[Proof of Theorem \ref{coorddef}]
We begin with the uniqueness of $u_{r}$, recalling that the homogeneity of degree $r$ implies, via Theorem \ref{egen}, that $u_{r}$ has a unique presentation as $\sum_{\lambda \vdash r}c_{\lambda}\tilde{e}_{\lambda}$ (as $r \leq N$, the condition $l(\lambda^{t}) \leq N$ is redundant for partitions $\lambda \vdash r$). Moreover, since when $d>r$ we have $m_{h}=0$ for every $h \geq d$ in every partition $\lambda \vdash r$, the condition $D_{d}u_{r}=0$ holds for such $d$ regardless of the choice of the homogenous expression $u_{r}$ of degree $r$.

Now, when $d=r$ the partition $\lambda_{r}$ of $r$ of length 1 contains $\tilde{e}_{r}$ (in fact, $\tilde{e}_{\lambda_{r}}=\tilde{e}_{r}$), and all the other partitions are only based on integers $1 \leq h<r$. It follows from Corollary \ref{difepart} (and even Corollary \ref{actnoreh}) that if $u_{r}$ is presented as above then $D_{r}u_{r}$ just gives the coefficient $c_{\lambda_{r}}$. Thus the equality $D_{r}u_{r}=1$ implies (and is equivalent to) $c_{\lambda_{r}}=1$.

We now prove the uniqueness of the value of $c_{\lambda}$ by decreasing induction on the maximal number showing up in $\lambda$ (namely on $l(\lambda^{t})$ as before). We have established the basis of the induction with $l(\lambda^{t})=r$ above, so assume that we have determined the value of $c_{\lambda}$ for every $\lambda \vdash r$ with $l(\lambda^{t})>d$ for some $d<r$ by the equalities $D_{p}u_{r}=\delta_{p,r}$ for every $p>d$, and we wish to find the values of $c_{\lambda}$ when $l(\lambda^{t})=d$. Note that $D_{p}$ with $p>d$ annihilates $\tilde{e}_{\lambda}$ as above, so that this coefficient did not show up in the previous calculations, and consider the equality $D_{d}u_{r}=0$. The expressions $\tilde{e}_{\lambda}$ with $l(\lambda^{t})<d$ are annihilated by $D_{d}$ once again, and as $\lambda-d\varepsilon_{h}$ is a partition of $r-d$ for every $d \leq h \leq r$, we deduce that the image under $D_{d}$ of the determined combination $\sum_{\lambda \vdash r,\ l(\lambda^{t})>d}c_{\lambda}\tilde{e}_{\lambda}$ is some combination $\sum_{\rho \vdash r-d}b_{\rho}\tilde{e}_{\rho}$. Now, for every $\lambda \vdash r$ with $l(\lambda^{t})=d$ Corollary \ref{difepart} (with the precise bound $l(\lambda^{t})$ on $h$) yields $D_{d}\tilde{e}_{\lambda}=m_{d}\tilde{e}_{\lambda-d\varepsilon_{d}}$, and the map $\lambda\mapsto\lambda-d\varepsilon_{d}$ is an injective map from the partitions $\lambda \vdash r$ with $l(\lambda^{t})=d$ into the partitions $\rho \vdash r-d$ (it is a bijection when we impose the condition $l(\rho^{t}) \leq d$ on the latter partitions). Therefore the only choice for obtaining $D_{d}u_{r}=0$ is by choosing the coefficient $c_{\lambda}$ for such $\lambda$ to be $-b_{\lambda-d\varepsilon_{d}}/m_{d}$, where $b_{\lambda-d\varepsilon_{d}}$ is given in terms of the values that we had already determined by the induction hypothesis. This means that if an expression homogenous symmetric $u_{r}$ that satisfies $D_{d}u_{r}=\delta_{d,r}$ for every $d$ exists, then it is unique.

For proving the existence, we simply verify that our explicit formula for $u_{r}$ has the desired property. We have already seen that $D_{d}u_{r}=0$ when $d>r$ in any case, and since the coefficient associated with $\lambda_{r}$, having length 1 and only one positive multiplicity $m_{r}=1$, equals 1, we get $D_{r}u_{r}=1$ as before. So assume that $d<r$, and we have to verify that $D_{d}u_{r}=0$. Using Corollary \ref{difepart} (and linearity) we write $D_{d}u_{r}$ as \[D_{d}\sum_{\lambda \vdash r}(-1)^{l(\lambda)-1}\frac{\big(l(\lambda)-1\big)!}{\prod_{h=1}^{r}m_{h}!}\tilde{e}_{\lambda}=\sum_{\lambda \vdash r}(-1)^{l(\lambda)-1}\frac{\big(l(\lambda)-1\big)!}{\prod_{h=1}^{r}m_{h}!}\cdot\sum_{h=d}^{N}m_{h}\tilde{e}_{\lambda-d\varepsilon_{h}},\] and for every $\rho \vdash r-d$, in which every $h$ appears with multiplicity $\mu_{h}$, we have to prove that the total contribution to the multiplier of $\tilde{e}_{\rho}$ vanishes.

Now, the terms yielding a multiple of $\tilde{e}_{\rho}$ are the ones coming from $\lambda \vdash r$ and $h \geq d$ such that $\lambda-d\varepsilon_{h}=\rho$. When $h>d$ this means that one instance of $h$ was replaced by $h-d$ when we moved from $\lambda$ to $\rho$, meaning that we have $m_{h}=\mu_{h}+1$, $m_{h-d}=\mu_{h-d}-1$, and $m_{g}=\mu_{g}$ for every other value of $g$. Thus $l(\lambda)=\sum_{h=1}^{r}m_{h}=\sum_{h=1}^{r}\mu_{h}=l(\rho)$, and the additional multiplier is \[\frac{m_{h}}{\prod_{g=1}^{r}m_{g}!}=\frac{m_{h-d}+1}{(m_{h}-1)!(m_{h-d}+1)!\prod_{g \neq h,h-d}m_{g}!}=\frac{\mu_{h-d}}{\prod_{g=1}^{r-d}\mu_{g}!}\] (we can let $g$ go only up to $r-d$ since $\rho \vdash r-d$). This means that the contribution of the term arising from such $h$ to the coefficient multiplying expression $\tilde{e}_{\rho}$ is $(-1)^{l(\rho)-1}\big(l(\rho)-1\big)!\big/\prod_{g=1}^{r-d}\mu_{g}!$ times $\mu_{h-d}$.

However, we also have the combination with $h=d$ and $\lambda$ with $\lambda-d\varepsilon_{d}=\rho$, for which going from $\lambda$ to $\rho$ simply means discarding one instance of $d$. Therefore $m_{d}=\mu_{d}+1$ and $m_{g}=\mu_{g}$ for $g \neq d$, so that $l(\lambda)=l(\rho)+1$ and $m_{d}\big/\prod_{h=1}^{r}m_{h}!$ reduces to $1\big/\prod_{g=1}^{r-d}\mu_{g}!$, and as $(-1)^{l(\lambda)-1}\big(l(\lambda)-1\big)!=-(-1)^{l(\rho)-1}\big(l(\rho)\big)!$, we find that this is the coefficient $(-1)^{l(\rho)-1}\big(l(\rho)-1\big)!\big/\prod_{g=1}^{r-d}\mu_{g}!$ times $\tilde{e}_{\rho}$ multiplied by $-l(\rho)$. Gathering all the multiples of $(-1)^{l(\rho)-1}\big(l(\rho)-1\big)!\big/\prod_{g=1}^{r-d}\mu_{g}!$ times $\tilde{e}_{\rho}$ yields the total value \[\textstyle{\sum_{h=d+1}^{r}\mu_{h-d}-l(\rho)=\sum_{g=1}^{d-r}\mu_{g}-l(\rho)=0},\] as desired. Therefore our $u_{r}$ satisfies the desired equalities, and it was seen above to be unique once it exists. This proves the theorem.
\end{proof}
Note that the proof of the uniqueness in Theorem \ref{coorddef} was not enough to establish the existence of $u_{r}$, since at the induction step associated with $d$ we could only make sure to cancel the multiple of $\tilde{e}_{\rho}$ for $\rho \vdash r-d$ with $l(\rho^{t}) \leq d$ by choosing the (unique) appropriate values for $c_{\lambda}$ for $\lambda \vdash r$ with $l(\lambda^{t})=d$. That argument alone did not suffice to verify that if $l(\rho^{t})>d$ then the coefficient of $\tilde{e}_{\rho}$ in that $D_{d}$-image, which cannot be affected by the $c_{\lambda}$'s in question, indeed vanishes.

\smallskip

The fact that the expression $u_{r}$ from Theorem \ref{coorddef} is $\tilde{e}_{r}$ plus a polynomial expression in $\{\tilde{e}_{h}\}_{h=1}^{r-1}$, with rational coefficients, implies, via Theorem \ref{egen}, that $\{u_{r}\}_{r=1}^{N}$ also generate the ring of symmetric functions in $\{x_{i}\}_{i\in\mathbb{N}_{N}}$ over $\mathbb{Q}$. We can therefore use them as coordinates for presenting any symmetric function $\phi$ of $\{x_{i}\}_{i\in\mathbb{N}_{N}}$, and with this presentation the respective derivatives are only based on derivatives of $\phi$ and denominators, with no multipliers.
\begin{cor}
Let $\phi$ be a continuously differentiable symmetric function of $N$ real variables. If we write $\phi(x_{1},\ldots,x_{N})$ as $\varphi(u_{1},\ldots,u_{N})$ for some function $\varphi$, with the $u_{r}$'s being those from Theorem \ref{coorddef}, then we have the equality $\varphi_{u_{d}}=D_{d}\phi$ for every $1 \leq d \leq N$, holding at every point in which the values of the $x_{i}$'s are all distinct. \label{difcoord}
\end{cor}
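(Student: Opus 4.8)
The plan is to exploit the fact that each $D_{d}$ is, at a point where all the $x_{i}$ take distinct values, a genuine first-order differential operator in the $x_{i}$'s, hence a derivation, so that it interacts with the chain rule in the simplest possible way. Concretely, I would write $D_{d}=\sum_{i\in\mathbb{N}_{N}}a_{i}\partial_{i}$ for suitable rational functions $a_{i}$ of the $x_{i}$'s---the coefficients read off from $d!\sum_{|I|=d}D_{I}$, which have no poles precisely because we stay off the locus where two variables coincide. The idea is then to apply $D_{d}$ to the identity $\phi=\varphi(u_{1},\ldots,u_{N})$ and reduce everything to the already-established action $D_{d}u_{r}=\delta_{d,r}$ from Theorem \ref{coorddef}.

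First I would invoke the preceding discussion to ensure that $\{u_{r}\}_{r=1}^{N}$ is an admissible coordinate system: since each $u_{r}$ equals $\tilde{e}_{r}$ plus a polynomial in $\tilde{e}_{1},\ldots,\tilde{e}_{r-1}$, Theorem \ref{egen} guarantees that the $u_{r}$'s generate the ring of symmetric functions, so $\phi$ may legitimately be rewritten as $\varphi(u_{1},\ldots,u_{N})$, with $\varphi$ inheriting continuous differentiability from $\phi$ at points where the change of coordinates is a local diffeomorphism (that is, away from the coincidence locus). Treating $\phi$ as the composite of $\varphi$ with the maps $x\mapsto u_{r}(x)$, the ordinary chain rule gives $\partial_{i}\phi=\sum_{r=1}^{N}\varphi_{u_{r}}(u_{r})_{x_{i}}$ for each $i$; multiplying by $a_{i}$ and summing over $i$ then yields
\[D_{d}\phi=\sum_{i}a_{i}\sum_{r=1}^{N}\varphi_{u_{r}}(u_{r})_{x_{i}}=\sum_{r=1}^{N}\varphi_{u_{r}}\Big(\sum_{i}a_{i}(u_{r})_{x_{i}}\Big)=\sum_{r=1}^{N}\varphi_{u_{r}}\cdot D_{d}u_{r}.\]
Substituting $D_{d}u_{r}=\delta_{d,r}$ collapses the sum to the single term $\varphi_{u_{d}}$, which is exactly the claimed identity.

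The only points requiring care, rather than a serious obstacle, are the two hypotheses built into the statement. The restriction to points with distinct $x_{i}$ is what makes the coefficients $a_{i}$ finite, so that $D_{d}$ is an honest first-order operator there and the interchange of the two summations above is valid; at coincidence points the operators must be reinterpreted as the higher-order combinations discussed in the Introduction, which is deferred to Section \ref{Diag}. The essential structural fact being used is precisely that $D_{d}$ has order one---so that it is a derivation and the chain rule produces no cross terms---exactly the property that, as noted after Corollary \ref{difepart}, fails for the naive product $\prod_{i\in I}\partial_{i}$ and would otherwise spoil the clean reduction to $D_{d}u_{r}$.
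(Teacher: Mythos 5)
Your proof is correct and follows essentially the same route as the paper's: apply the first-order operator $D_{d}$ to $\phi=\varphi(u_{1},\ldots,u_{N})$, use the chain rule (valid off the coincidence locus, where the coefficients of $D_{d}$ are finite) to obtain $\sum_{r=1}^{N}D_{d}u_{r}\cdot\varphi_{u_{r}}$, and invoke $D_{d}u_{r}=\delta_{d,r}$ from Theorem \ref{coorddef} to collapse the sum to $\varphi_{u_{d}}$. The only differences are presentational: you spell out the expansion $D_{d}=\sum_{i}a_{i}\partial_{i}$ and the summation interchange explicitly, and you fold into the proof the admissibility of $\{u_{r}\}_{r=1}^{N}$ as coordinates, which the paper establishes in the paragraph preceding the corollary.
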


\begin{proof}
For every $1 \leq d \leq N$, the operator $D_{d}$ is a differential operator of degree 1, which can be applied to any differentiable function at any point $(x_{1},\ldots,x_{N})$ for which $x_{i} \neq x_{j}$ wherever $i \neq j$ (this condition is required for the denominators in the definition of $D_{d}$ not to vanish). Therefore when we apply it on $\phi=\phi(x_{1},\ldots,x_{N})=\varphi(u_{1},\ldots,u_{N})$, we get, by the chain rule, the expression $\sum_{r=1}^{N}D_{d}u_{r}\cdot\varphi_{u_{r}}$. As Theorem \ref{coorddef} evaluates $D_{d}u_{r}$ as $\delta_{d,r}$, the latter sum reduces to $\varphi_{u_{d}}$, as desired. This proves the corollary.
\end{proof}
The result of Corollary \ref{difcoord} is equivalently valid for holomorphic symmetric functions in $N$ complex variables. So will all the results about derivatives below.

Another consequence that we obtain from Theorem \ref{coorddef} is the following one.
\begin{cor}
The Weyl algebra of symmetric functions in $N$ variables is generated by $\{u_{r}\}_{r=1}^{N}$ (all commuting with one another) and $\{D_{d}\}_{d=1}^{N}$ (commuting with one another as well), with the commutation relation $[D_{d},u_{r}]=\delta_{d,r}$. \label{Weyl}
\end{cor}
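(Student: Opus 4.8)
The plan is to transport the entire question into the coordinate system $(u_{1},\ldots,u_{N})$ furnished by Theorem \ref{coorddef}, in which the algebra collapses to the standard Weyl algebra on affine space. The first step is to record, using Corollary \ref{difcoord}, that each $D_{d}$ acts on symmetric functions exactly as the partial derivative $\partial_{u_{d}}$: if $\phi=\varphi(u_{1},\ldots,u_{N})$ is symmetric then $D_{d}\phi=\varphi_{u_{d}}$. Equivalently, $D_{d}$ is a derivation annihilating constants whose values on the coordinates are $D_{d}u_{r}=\delta_{d,r}$ by Theorem \ref{coorddef}, and a derivation is determined by its action on a coordinate system, so $D_{d}=\partial_{u_{d}}$ as an operator on symmetric functions. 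Since the discussion following Theorem \ref{coorddef} shows that $\{u_{r}\}_{r=1}^{N}$ generate the ring of symmetric functions over $\mathbb{Q}$, this ring is the rational coordinate ring in the $u_{r}$, and the symmetric differential operators are precisely the differential operators on this affine coordinate ring.

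With this identification the three families of relations are essentially formal. The operators $u_{r}$ are multiplications by commuting functions, whence $[u_{r},u_{r'}]=0$. Each $D_{d}$ is a first-order operator with no zeroth-order part, so Leibniz' rule gives, for every symmetric $\phi$, the equality $[D_{d},u_{r}]\phi=D_{d}(u_{r}\phi)-u_{r}D_{d}\phi=(D_{d}u_{r})\phi=\delta_{d,r}\phi$, so that $[D_{d},u_{r}]=\delta_{d,r}$ by Theorem \ref{coorddef}. Finally, because $D_{d}\phi$ and $D_{d'}\phi$ are again symmetric (as $D_{d}$ and $D_{d'}$ are symmetric operators) and equal $\varphi_{u_{d}}$ and $\varphi_{u_{d'}}$, applying $D_{d}D_{d'}$ and $D_{d'}D_{d}$ to a symmetric $\phi$ both return the mixed partial $\varphi_{u_{d}u_{d'}}$; the equality of mixed partials then yields $[D_{d},D_{d'}]=0$ as operators on symmetric functions.

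It remains to prove generation, which is the step I expect to require the most care. Multiplication by an arbitrary symmetric function is a polynomial in the $u_{r}$ by the generation statement just cited. For the operators, the key point is that $(u_{1},\ldots,u_{N})$ is a genuine system of coordinates at points where the $x_{i}$ are distinct: since $u_{r}=\tilde{e}_{r}$ plus a polynomial in $\{\tilde{e}_{h}\}_{h<r}$, the change of variables from $(x_{1},\ldots,x_{N})$ to $(u_{1},\ldots,u_{N})$ has the same, Vandermonde-type, invertible Jacobian as the $\tilde{e}_{r}$ do there, so the derivations $\partial_{u_{d}}=D_{d}$ form a free basis for the symmetric derivations over the ring of symmetric functions. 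Consequently every symmetric differential operator is a symmetric-function combination of products of the $D_{d}$, hence lies in the subalgebra generated by $\{u_{r}\}$ and $\{D_{d}\}$, and the relations above exhibit this subalgebra as the Weyl algebra with the asserted presentation. The commutation relations themselves being immediate consequences of Theorem \ref{coorddef} and Corollary \ref{difcoord}, the genuine obstacle is precisely this spanning claim—verifying that the $D_{d}$ exhaust all symmetric derivations and that no symmetric differential operator escapes the algebra they generate.
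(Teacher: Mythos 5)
Your proposal is correct and takes essentially the same route as the paper: the paper's proof likewise identifies the ring of symmetric polynomials with the polynomial algebra on $\{u_{r}\}_{r=1}^{N}$ (via Theorem \ref{egen}) and invokes Theorem \ref{coorddef} to recognize $\{D_{d}\}_{d=1}^{N}$ as the dual system of derivations, so that adjoining them yields the standard Weyl algebra. Your version merely spells out what the paper leaves implicit---the explicit commutator computations and the spanning argument for symmetric differential operators---without changing the underlying idea.
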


\begin{proof}
We already saw that Theorem \ref{egen} allows us to present the ring of symmetric polynomials in $N$ variables as the polynomial algebra generated by $\{u_{r}\}_{r=1}^{N}$. Since Theorem \ref{coorddef} shows that $\{D_{d}\}_{d=1}^{N}$ is the dual system of derivations on that algebra, joining them to this algebra produces the corresponding Weyl algebra (and in particular the commutation of the $D_{d}$'s with one another). This proves the corollary.
\end{proof}

\section{Properties of the Coordinates $\{u_{r}\}_{r=1}^{N}$ \label{PropCoord}}

In this section we establish some of the properties of coordinates $\{u_{r}\}_{r=1}^{N}$ from Theorem \ref{coorddef}, and see alternative ways to characterize them.

\smallskip

First we state that the homogeneity and the formulae from Corollary \ref{difcoord} essentially characterize the coordinates $\{u_{r}\}_{r=1}^{N}$ from Theorem \ref{coorddef}.
\begin{thm}
Take, for every $1 \leq r \leq N$, a homogenous symmetric function $v_{r}$ of degree $r$ in $\{x_{i}\}_{i\in\mathbb{N}_{N}}$. Assume that $\{v_{r}\}_{r=1}^{N}$ are coordinates for symmetric functions, such that if we write a symmetric function $\phi=\phi(x_{1},\ldots,x_{N})$ as $\eta(v_{1},\ldots,v_{N})$, then for every $r$ the derivative $\eta_{v_{r}}$ involves derivatives with respect to the $x_{i}$'s and denominators, with no numerators, and with the terms from $D_{r}\phi$ showing up multiplied by 1. Then $v_{r}=u_{r}$ for every $1 \leq r \leq N$. \label{uniqders}
\end{thm}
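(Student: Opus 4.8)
The plan is to combine the Weyl-algebra description of Corollary~\ref{Weyl} with the uniqueness clause of Theorem~\ref{coorddef}. Since $\{v_{r}\}_{r=1}^{N}$ is a coordinate system, it generates the ring of symmetric functions over $\mathbb{Q}$, so each $u_{d}$ is a polynomial in the $v_{r}$'s, and the operator $E_{r}$ sending a symmetric $\phi$ to $\eta_{v_{r}}$ is the derivation $\partial/\partial v_{r}$. Writing the chain rule for the change from the $u$-coordinates to the $v$-coordinates and using Corollary~\ref{difcoord} to realize $\partial/\partial u_{d}$ as $D_{d}$, I obtain, at every point with distinct $x_{i}$'s, the expansion \[E_{r}=\sum_{d=1}^{N}g_{d,r}D_{d},\qquad g_{d,r}:=\tfrac{\partial u_{d}}{\partial v_{r}}.\] Here each $g_{d,r}$ is a symmetric polynomial, and comparing homogeneity degrees (recall $u_{d}$ has degree $d$ and $v_{r}$ degree $r$) shows that $g_{d,r}$ is homogenous of degree $d-r$; hence $g_{d,r}=0$ for $d<r$, the coefficient $g_{r,r}$ is a constant, and for $d>r$ the coefficient $g_{d,r}$ is a symmetric polynomial of positive degree.

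Next I would translate the two hypotheses on $\eta_{v_{r}}=E_{r}\phi$ into the language of this expansion. Each $D_{d}$ is, by its definition through Lemma~\ref{DIexI} and Proposition~\ref{Ddoneh}, a \emph{pure-denominator} first-order operator: the coefficient of every $\partial_{i}$ is a rational combination of reciprocals of products of the differences $x_{l}-x_{i}$, with no polynomial numerator. Multiplying $D_{d}$ by the non-constant symmetric polynomial $g_{d,r}$ inserts $g_{d,r}$ as a genuine numerator, so the requirement that $\eta_{v_{r}}$ contain \emph{no numerators} should force every $g_{d,r}$ with $d>r$ to vanish, while the requirement that the terms of $D_{r}\phi$ appear multiplied by $1$ forces the constant $g_{r,r}$ to equal $1$. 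Together these give $E_{r}=D_{r}$.

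With $E_{r}=D_{r}$ in hand, the theorem follows immediately. Applying $E_{r}$ to the symmetric function $\phi=v_{s}$ gives $\eta_{v_{r}}=\delta_{r,s}$, whence $D_{r}v_{s}=\delta_{r,s}$ for all $r$ and $s$. Thus, for each fixed $s$, the homogenous symmetric polynomial $v_{s}$ of degree $s$ satisfies $D_{d}v_{s}=\delta_{d,s}$ for every $d$, and the uniqueness assertion of Theorem~\ref{coorddef} identifies it with $u_{s}$.

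The step I expect to be the main obstacle is the middle one: proving rigorously that the coefficients $g_{d,r}$ with $d>r$ must vanish, i.e.\ that no cancellation \emph{across} the various $D_{d}$ can conspire to remove all polynomial numerators. The single-term case is clear, since a nonzero symmetric polynomial cannot exactly divide a product of distinct differences $x_{l}-x_{i}$ (by symmetry its order of vanishing along each hyperplane $x_{l}=x_{i}$ is forced to be even), so no $g_{d,r}$ can be cleared against its own denominators; the delicate point is cross-term cancellation. I would handle it by a pole-depth argument: the coefficient of $\partial_{i}$ in a pure-denominator operator of homogeneity degree $1-r$ is a combination of reciprocals of products of exactly $r-1$ differences through the index $i$, whereas the largest $d=D$ with $g_{D,r}\neq0$ contributes, via $g_{D,r}D_{D}$, terms whose single denominators carry $D-1>r-1$ such factors. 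Since only $g_{D,r}D_{D}$ reaches that depth, its deepest part cannot be cancelled by the shallower contributions of the other $D_{d}$, and matching to the pure-denominator form forces $g_{D,r}=0$, contradicting the choice of $D$; descending in $d$ then kills all the higher coefficients.
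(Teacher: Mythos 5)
Your skeleton is the right one, and two of its three steps are sound: the chain-rule/homogeneity reduction to $\eta_{v_{r}}=\sum_{d \geq r}g_{d,r}D_{d}\phi$ with $g_{d,r}$ symmetric homogenous of degree $d-r$ and $g_{r,r}$ constant is exactly how the paper's proof begins, and your closing move (once $E_{r}=D_{r}$, apply it to $\phi=v_{s}$ to get $D_{r}v_{s}=\delta_{r,s}$ and invoke the uniqueness clause of Theorem \ref{coorddef}) is correct and even a bit cleaner than the paper's ending. The gap is exactly where you flagged it, and the pole-depth sketch does not close it. The flaw is that ``depth'' --- the number of difference factors in a denominator --- is a feature of a particular presentation, not an invariant of the rational function: there are nontrivial linear relations collapsing deep presentations to shallow ones, the simplest being $\sum_{i \in I}1\big/\prod_{j \in I\setminus\{i\}}(x_{j}-x_{i})=0$ for any $|I|\geq2$, which is nothing but Lemma \ref{DIexI} applied to $e_{1}(x_{I})$ (and Corollary \ref{ehDI} similarly shows deep operators producing polynomial, i.e.\ denominator-free, outputs). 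Moreover the danger is not only cancellation of $g_{D,r}D_{D}$ against the shallower $g_{d,r}D_{d}$: for the coefficient of $\partial_{i}$, the operator $g_{D,r}D_{D}$ is itself a sum over all sets $I \ni i$ of size $D$ times a symmetric polynomial, and nothing in your argument rules out that this sum internally rewrites in pure-denominator form of degree $r-1$. Indeed, putting everything over a common denominator, the coefficient of $\partial_{i}$ in $\sum_{d \geq r}g_{d,r}D_{d}$ equals $\bigl[\sum_{d \geq r}d!\,g_{d,r}\,e_{N-d}\bigl(\{x_{j}-x_{i}\}_{j \neq i}\bigr)\bigr]\big/\prod_{j \neq i}(x_{j}-x_{i})$, so every contribution has only simple poles along each hyperplane $x_{j}=x_{i}$; no divisor-by-divisor pole order can distinguish $D_{r}$ from $D_{D}$. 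What does distinguish them is the behavior on the higher-codimension strata where many variables coincide, and that is not addressed by your sketch.

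This is precisely why the paper postpones this proof until after Section \ref{Diag}: its argument evaluates the identity on the sets $Y_{s}$ (all $x_{i}$ with $i>s$ equal to a common value $y$), where Proposition \ref{Ddallpt} (resting on Theorem \ref{expordp}) shows that $D_{d}$ produces genuinely higher-order derivatives $\partial_{H}^{a}\phi$ that the shallower operators cannot produce --- this is the invariant that makes ``depth'' meaningful --- and then a minimal-$s$/maximal-$d$ choice forces the numerator $g_{d,r}$ (the paper's $q_{d,r}$) restricted to $Y_{s}$ to be divisible by $\prod_{i \leq s}(x_{i}-y)$, which contradicts symmetry in all $N$ variables. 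Your parenthetical remark that a nonzero symmetric polynomial must vanish to even order along each hyperplane $x_{l}=x_{i}$ is true, and it is essentially the same symmetry obstruction the paper exploits in its final contradiction; but as you use it, it only disposes of a single term $g_{D,r}\big/\prod_{j \in I\setminus\{i\}}(x_{j}-x_{i})$, not of the full sums over $I$ and over $d$. To repair the middle step you would need the coincidence-point formulae of Section \ref{Diag} (or an equivalent substitute that detects depth through the order of the derivatives of $\phi$ appearing in the limits), which is exactly the machinery your proposal is missing.
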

While one can prove Theorem \ref{uniqders} now already, we postpone the proof until we have some formulae from the next section, which will make the proof significantly simpler. The reader can verify that this theorem is not used anywhere in the following section, so that there is no circular reasoning in proving it in this way.

\smallskip

Recall that a diagonal point in a 2-dimensional space is a point where the values of the two variables are the same. Here we have, in general, more than two variables, so the only symmetric set that we can define in this manner is the \emph{total diagonal}, consisting of the points in $\mathbb{R}^{N}$ at which all the variables $\{x_{i}\}_{i\in\mathbb{N}_{N}}$ take the same value. An argument generalizing the proof of Theorem \ref{coorddef} yields the following additional property of our coordinates.
\begin{thm}
Take any simple derivative $\prod_{q=1}^{d}\partial_{i_{q}}$ of order $d$ (where some of the indices $i_{q}$, $1 \leq q \leq d$ are allowed to coincide). Then when we apply it to $u_{r}$ with $r \neq d$, the resulting function vanishes along the total diagonal. \label{annbyders}
\end{thm}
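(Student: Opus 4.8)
The plan is to reduce the entire statement to the single identity $D_{1}u_{r}=\delta_{1,r}$ already established in Theorem \ref{coorddef}, rather than to rerun its combinatorial proof. The point is that $D_{1}$ is nothing but the derivative along the total diagonal: by the definition in Proposition \ref{Ddoneh} one has $D_{1}=1!\sum_{|I|=1}D_{I}=\sum_{i\in\mathbb{N}_{N}}\partial_{i}$, since $D_{\{i\}}=\partial_{i}$. Hence for $r\geq2$ the equality $D_{1}u_{r}=\delta_{1,r}=0$ says exactly that $u_{r}$ is annihilated by $\sum_{i}\partial_{i}$, i.e.\ that $u_{r}$ is invariant under the translations $x\mapsto x+c\mathbf{1}$ along the all-ones vector $\mathbf{1}=(1,\dots,1)$, whose flow is generated by this vector field.

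First I would dispose of the easy case $d>r$: then $\prod_{q=1}^{d}\partial_{i_{q}}u_{r}$ is a partial derivative of order exceeding the degree $r$ of the polynomial $u_{r}$, so it vanishes identically and a fortiori on the total diagonal. For the main case $1\leq d<r$ (so $r\geq2$), set $f:=\prod_{q=1}^{d}\partial_{i_{q}}u_{r}$. Since the $\partial_{i}$ all commute, $\prod_{q}\partial_{i_{q}}$ commutes with $\sum_{i}\partial_{i}$, whence $\sum_{i}\partial_{i}f=\prod_{q}\partial_{i_{q}}\big(\sum_{i}\partial_{i}u_{r}\big)=0$; thus $f$ too is invariant under $x\mapsto x+c\mathbf{1}$. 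Evaluating at a total-diagonal point $a\mathbf{1}=(a,\dots,a)$ and translating by $-a\mathbf{1}$ gives $f(a\mathbf{1})=f(\mathbf{0})$. But $f$ is homogeneous of degree $r-d\geq1$, so $f(\mathbf{0})=0$, and therefore $f$ vanishes at every point of the total diagonal, as asserted.

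Equivalently, and more vividly, the relation $\big(\sum_{i}\partial_{i}\big)u_{r}=0$ integrates to the identity $u_{r}(\mathbf{1}+\varepsilon)=u_{r}(\varepsilon)$ of polynomials in $\varepsilon=(\varepsilon_{1},\dots,\varepsilon_{N})$; as $u_{r}$ is homogeneous of degree $r$, the right-hand side carries no monomial of degree below $r$, so every mixed partial of order $d<r$ of $u_{r}$ vanishes at $\mathbf{1}$ (and, by homogeneity, along the whole diagonal). By the symmetry of $u_{r}$ the value $\prod_{q}\partial_{i_{q}}u_{r}(\mathbf{1})$ depends only on the partition recording how the indices $i_{q}$ repeat, so no separate bookkeeping over the choices of indices is needed.

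I do not expect a genuine obstacle here: once one notices that $D_{1}$ is the plain diagonal derivative, the theorem becomes a one-line consequence of homogeneity. The only points needing (routine) care are the identification $D_{1}=\sum_{i}\partial_{i}$ and the remark that homogeneity forces $f(\mathbf{0})=0$ precisely when $d\neq r$ --- which is exactly why this clean argument covers the stated range $d\neq r$ and no more (indeed $D_{r}u_{r}=1$ shows that order-$r$ derivatives need not vanish on the diagonal). Should one instead insist on the combinatorial route hinted at by the phrase ``generalizing the proof of Theorem \ref{coorddef}'', the hard part would be to compute $\prod_{q}\partial_{i_{q}}\tilde{e}_{\lambda}$ on the diagonal --- using the multilinearity of each $e_{h}$ to control how the derivatives distribute across the factors of $\tilde{e}_{\lambda}$ --- and then to verify the resulting cancellation; the translation-invariance argument bypasses this entirely.
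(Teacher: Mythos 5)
Your proof is correct, and it takes a genuinely different---and considerably shorter---route than the paper's. The paper argues combinatorially: it expands $u_{r}$ in the basis $\{\tilde{e}_{\lambda}\}_{\lambda\vdash r}$, invokes Lemma \ref{derdelambda} for the action of $\prod_{q=1}^{d}\partial_{i_{q}}$ on each $\tilde{e}_{\lambda}$, evaluates on the total diagonal, groups the result by the partitions $\rho\vdash r-d$, and verifies that the coefficient of each $\tilde{e}_{\rho}$ vanishes via the Vandermonde--Chu identity $\sum_{\beta}\binom{z}{\beta}\binom{w}{\gamma-\beta}=\binom{z+w}{\gamma}$, which collapses to $\binom{0}{\gamma}=0$. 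You instead isolate the only facts that matter: $D_{1}=\sum_{i}\partial_{i}$ involves no denominators, Theorem \ref{coorddef} gives $D_{1}u_{r}=0$ for $r\geq2$ as a genuine polynomial identity, and partial derivatives commute, so $f:=\prod_{q=1}^{d}\partial_{i_{q}}u_{r}$ is again annihilated by $\sum_{i}\partial_{i}$ and is therefore constant along every line in the direction $\mathbf{1}=(1,\ldots,1)$; since $f$ is homogeneous of positive degree $r-d$, its value at the origin is $0$, hence it vanishes along the whole total diagonal. There is no circularity here, since Theorem \ref{coorddef} precedes this theorem and its proof does not use it. Your argument even proves slightly more: it applies verbatim to $d=0$ (recovering Corollary \ref{coordiag} for $r\geq2$), and it shows that \emph{any} homogeneous polynomial of positive degree killed by $\sum_{i}\partial_{i}$ vanishes on the total diagonal, with no symmetry hypothesis needed; it also makes transparent why $d=r$ must be excluded, as then $f$ is a (generally nonzero) constant. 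What the paper's heavier route buys is the explicit formula of Lemma \ref{derdelambda}, which is reused later in the proof of Theorem \ref{ordwithN}, so that machinery is not wasted in the paper's overall economy; but as a proof of Theorem \ref{annbyders} alone, your translation-invariance argument is cleaner and could replace the paper's proof outright.
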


For proving Theorem \ref{annbyders}, we first make the following evaluation, of the derivative from that theorem on an expression $\tilde{e}_{\lambda}$ as in Corollary \ref{difepart}.
\begin{lem}
Let $\lambda$ be a partition of some integer $r$, in which each number $1 \leq h \leq r$ appears with multiplicity $m_{h}$, and consider the operator $\prod_{q=1}^{d}\partial_{i_{q}}$ from Theorem \ref{annbyders}. If $\iota:\mathbb{N}_{d}\to\mathbb{N}_{N}$ is the map taking $1 \leq q \leq d$ to the index $i_{q}$, then denote by $X_{\iota}$ the set of all partitions of $\mathbb{N}_{d}$ into a disjoint union of sets $\{J_{\nu}\}_{\nu=1}^{\gamma}$, such that the indices $i_{q}$ for $q$ in the same set $J_{\nu}$ are all distinct. Then the value of $\partial^{d}\tilde{e}_{\lambda}\big/\prod_{q=1}^{d}\partial_{i_{q}}$ is the sum over all elements of $X_{\iota}$ of the expression \[\sum_{\substack{h_{\nu}\geq1,\ 1\leq\nu\leq\gamma \\ \sum_{\nu=1}^{\gamma}h_{\nu} \leq r}}\prod_{\nu=1}^{\gamma}\big(m_{h_{\nu}}-\textstyle{\sum_{\sigma=1}^{\nu-1}\delta_{h_{\nu},h_{\sigma}}}\big)\displaystyle \tilde{e}_{\lambda-\sum_{\nu=1}^{\gamma}h_{\nu}\varepsilon_{h_{\nu}}}\prod_{\nu=1}^{\gamma}\frac{\tilde{e}_{h_{\nu}-\kappa_{\nu}}(\{i_{q}|q \in J_{\nu}\}^{c})}{N!/(N-\kappa_{\nu})!},\] where $\kappa_{\nu}$ is the size of the set $J_{\nu}$ in the given element of $X_{\iota}$ for any $1\leq\nu\leq\gamma$, which hence satisfy $\sum_{\nu=1}^{\gamma}\kappa_{\nu}=d$. \label{derdelambda}
\end{lem}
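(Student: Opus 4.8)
The plan is to compute $\prod_{q=1}^{d}\partial_{i_{q}}\tilde{e}_{\lambda}$ directly via the generalized Leibniz rule, using that the operators $\partial_{i_{q}}$ are first order and commute and that $\tilde{e}_{\lambda}=\prod_{h=1}^{r}\tilde{e}_{h}^{m_{h}}$ is a product of $l(\lambda)$ factors. I would first record a single-factor formula: for a set $I\subseteq\mathbb{N}_{N}$ of \emph{distinct} indices of size $\kappa$, iterating the relation $\partial_{i}e_{h}=e_{h-1}(x_{\{i\}^{c}})$ (which follows from Lemma \ref{expeh}) over the elements of $I$ gives $\prod_{i\in I}\partial_{i}e_{h}=e_{h-\kappa}(x_{I^{c}})$; passing to normalized functions and tracking the factorials in $\tilde{e}_{h}=\frac{(N-h)!}{N!}e_{h}$ and in $\tilde{e}_{h-\kappa}(x_{I^{c}})$ on the $N-\kappa$ variables of $I^{c}$ yields $\prod_{i\in I}\partial_{i}\tilde{e}_{h}=\tilde{e}_{h-\kappa}(x_{I^{c}})\big/\big(N!/(N-\kappa)!\big)$. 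The decisive auxiliary observation is that each $\tilde{e}_{h}$ is \emph{multilinear} (degree at most $1$ in every variable), so applying two derivatives in the same variable to a single factor annihilates it.

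Next I would expand $\prod_{q}\partial_{i_{q}}$ over the product $\prod_{h}\tilde{e}_{h}^{m_{h}}$, treating the $m_{h}$ copies of $\tilde{e}_{h}$ as distinguishable factors. Leibniz then produces a sum over all maps assigning each of the $d$ derivative slots to one of the $l(\lambda)$ factors, each factor being hit by the derivatives in its fiber. By the multilinearity remark, any assignment in which some factor receives two slots carrying the same index $i_{q}$ contributes $0$; hence only the assignments whose non-empty fibers consist of pairwise distinct indices survive.

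The main obstacle is the bookkeeping that turns this sum over assignments into the stated sum over $X_{\iota}$. I would record each surviving assignment by its non-empty fibers --- a set partition of $\mathbb{N}_{N}$'s slot set $\mathbb{N}_{d}$ into distinct-index blocks, that is, an element $\{J_{\nu}\}_{\nu=1}^{\gamma}$ of $X_{\iota}$ --- together with the degrees $h_{\nu}$ of the factors the blocks are sent to and the choice of which of the $m_{h_{\nu}}$ copies each block lands in. For a fixed partition and degree sequence $(h_{\nu})$, the number of ways to send the blocks \emph{injectively} to distinct copies of the prescribed degrees is, processing the blocks in order, $\prod_{\nu=1}^{\gamma}\big(m_{h_{\nu}}-\sum_{\sigma=1}^{\nu-1}\delta_{h_{\nu},h_{\sigma}}\big)$, the $\nu$th factor counting the copies of degree $h_{\nu}$ not already used by the earlier blocks. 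I would check that this product is independent of the chosen ordering of the blocks (it equals $\prod_{h}m_{h}!/(m_{h}-k_{h})!$, where $k_{h}$ is the number of blocks of degree $h$), so that summing over the unordered partitions of $X_{\iota}$ is legitimate; this is exactly the multiplicity coefficient in the statement, and terms in which some $m_{h_{\nu}}$ is too small vanish automatically, correctly forbidding the removal of unavailable parts.

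Finally I would assemble the contribution of a fixed term: by the single-factor formula, a block $J_{\nu}$ of size $\kappa_{\nu}$ acting on a factor $\tilde{e}_{h_{\nu}}$ contributes $\tilde{e}_{h_{\nu}-\kappa_{\nu}}(\{i_{q}\mid q\in J_{\nu}\}^{c})\big/\big(N!/(N-\kappa_{\nu})!\big)$, while the $l(\lambda)-\gamma$ undifferentiated factors multiply to the normalized product for $\lambda$ with the $\gamma$ parts $h_{1},\dots,h_{\gamma}$ deleted, namely $\tilde{e}_{\lambda-\sum_{\nu}h_{\nu}\varepsilon_{h_{\nu}}}$; this deletion also forces the constraint $\sum_{\nu}h_{\nu}\leq r$ since at most $r=|\lambda|$ can be removed. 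Collecting these contributions over all elements of $X_{\iota}$ and all admissible degree sequences gives precisely the asserted expression. I expect the reorganization step to be the only genuinely delicate part, the rest being the iterated single-variable computation (from Lemma \ref{expeh}) and the normalization bookkeeping already used in Corollary \ref{difepart}.
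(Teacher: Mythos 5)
Your proposal is correct, but it organizes the computation differently from the paper: you perform a single global expansion via the generalized Leibniz rule, whereas the paper argues by induction on $d$, applying one derivative $\partial_{i_{d+1}}$ at a time to the already-established formula and checking that it either creates a new singleton block (when it hits $\tilde{e}_{\lambda-\sum_{\nu}h_{\nu}\varepsilon_{h_{\nu}}}$) or enlarges an existing block $J_{\nu}$ (when it hits $\tilde{e}_{h_{\nu}-\kappa_{\nu}}(\{i_{q}|q \in J_{\nu}\}^{c})$, vanishing if the new index repeats one in $J_{\nu}$), so that the terms for $X_{\tilde{\iota}}$ are assembled step by step. Your route rests on two ingredients the paper never isolates: the multilinearity of each $\tilde{e}_{h}$ (which is the clean conceptual reason why blocks must consist of distinct indices --- any fiber with a repeated index is annihilated), and the one-shot block formula $\prod_{i \in I}\partial_{i}\tilde{e}_{h}=\tilde{e}_{h-|I|}(x_{I^{c}})\big/\big(N!/(N-|I|)!\big)$, whose normalization bookkeeping you carry out correctly. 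In exchange you must do the global combinatorics that the paper's induction avoids: identifying surviving Leibniz assignments with pairs (element of $X_{\iota}$, injective placement of blocks into copies of the factors), counting the placements by the falling-factorial product $\prod_{\nu}\big(m_{h_{\nu}}-\sum_{\sigma<\nu}\delta_{h_{\nu},h_{\sigma}}\big)=\prod_{h}m_{h}!/(m_{h}-k_{h})!$, and checking this is independent of the ordering of the blocks so that summing over the unordered partitions in $X_{\iota}$ is legitimate; your treatment of the constraint $\sum_{\nu}h_{\nu}\leq r$ (implied by non-vanishing of the multiplicity factor, with unavailable or over-differentiated parts killed by the conventions $m_{h}\geq 0$ and $e_{l}=0$ for $l<0$) is also right. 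The net effect is that your proof makes the combinatorial meaning of $X_{\iota}$ and of the multiplicity coefficients transparent, while the paper's induction trades that transparency for purely local verifications at each step, with no need for the ordering-independence or counting arguments.
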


\begin{proof}
We argue by induction on $d$. For $d=1$ the derivative $\partial_{i}$ is simply the operator $D_{I}$ for the singleton $I=\{i\}$, where Corollary \ref{actnoreh} gives $\partial_{i}e_{h}=e_{h-1}(\{i\}^{c})$. Recalling the normalization from Corollary \ref{actnoreh}, of $\tilde{e}_{h}$ as $\frac{(N-h)!e_{h}}{N!}$ and thus of $\tilde{e}_{h-1}(\{i\}^{c})$ as $\frac{(N-h)!e_{h-1}(\{i\}^{c})}{(N-1)!}$, we deduce that $\partial_{i}\tilde{e}_{h}=\frac{\tilde{e}_{h-1}(\{i\}^{c})}{N}$. It follows, via Leibniz' Rule, that for our $\lambda \vdash r$ we have $\partial_{i}\tilde{e}_{\lambda}=\sum_{h=1}^{r}m_{h}\tilde{e}_{\lambda-h\varepsilon_{h}}\frac{\tilde{e}_{h-1}(\{i\}^{c})}{N}$ in the notation from the proof of Theorem \ref{coorddef}. This proves the case with $d=1$.

For higher values of $d$, note that our expression for $\partial\tilde{e}_{\lambda}/\partial_{i}$ contains $\tilde{e}_{\mu}$'s for partitions $\mu$, but also $\tilde{e}_{\nu}(\{i\}^{c})$ for partitions $\nu$ (in this case of length 1). When we apply another operator $\partial_{j}$, its action on $\tilde{e}_{\lambda-h\varepsilon_{h}}$ from the $g$th summand will produce $\sum_{g=1}^{r-h}(m_{g}-\delta_{g,h})\tilde{e}_{\lambda-h\varepsilon_{h}-g\varepsilon_{g}}\frac{\tilde{e}_{g-1}(\{j\}^{c})}{N}$ (the expression with the $\delta$-symbol showing up because the multiplicity of $h$ in $\lambda-h\varepsilon_{h}$ is the same $m_{g}$ when $g \neq h$, but equals $m_{h}-1$ if $g=h$ since we took out one instance of $h$), by what we just saw. However, it can also act on the multiplier $\frac{\tilde{e}_{h-1}(\{i\}^{c})}{N}$, which gives $\frac{\tilde{e}_{h-2}(\{i,j\}^{c})}{N(N-1)}$ if $j \neq i$ by the same argument, and just 0 in case $j=i$ since that expression does not depend on $x_{i}$ anymore. Combining these two parts we obtain the result for $d=2$, since the set of $X_{\iota}$ contains the partition of $\mathbb{N}_{2}$ into two singletons, as well as the partition into one set if $i \neq j$.

Continuing by induction, assume now that $\partial^{d}\tilde{e}_{\lambda}\big/\prod_{q=1}^{d}\partial_{i_{q}}$ is expressed via the induction hypothesis, apply another operator $\partial_{i_{d+1}}$, to obtain the image of $\tilde{e}_{\lambda}$ under a general such operator of degree $d+1$. Let $\tilde{\iota}$ be the extension of $\iota$ to $\mathbb{N}_{d+1}$ taking $d+1$ to $i_{d+1}\in\mathbb{N}_{N}$. Take, inside the summand associated with the element of $X_{\iota}$ containing the sets $\{J_{\nu}\}_{\nu=1}^{\gamma}$ (of respective cardinalities $\{\kappa_{\nu}\}_{\nu=1}^{\gamma}$), the term with the values $\{h_{\nu}\}_{\nu=1}^{\gamma}$, and let $\partial_{i_{d+1}}$ acts on it.

Now, the partition $\lambda-\sum_{\nu=1}^{\gamma}h_{\nu}\varepsilon_{h_{\nu}}$ is of $r-\sum_{\nu=1}^{\gamma}h_{\nu}$, and it contains each possible value of a new index $1 \leq h_{\gamma+1} \leq r-\sum_{\nu=1}^{\gamma}h_{\nu}$ with the multiplicity $m_{h_{\gamma+1}}-\sum_{\sigma=1}^{\gamma}\delta_{h_{\gamma+1},h_{\sigma}}\big)$. Therefore $\partial_{i_{d+1}}$ takes $\tilde{e}_{\lambda-\sum_{\nu=1}^{\gamma}h_{\nu}\varepsilon_{h_{\nu}}}$, by what we saw above, to the sum over such $h_{\gamma+1}$ of this respective multiplicity times $\tilde{e}_{\lambda-\sum_{\nu=1}^{\gamma+1}h_{\nu}\varepsilon_{h_{\nu}}}\frac{\tilde{e}_{h_{\gamma+1}-1}(\{i_{d+1}\}^{c})}{N}$ (when this partition is empty, the vanishing of this derivative will be expressed in terms of the vanishing multiplicities). Multiplying by the remaining expressions and summing over all the values of $\{h_{\nu}\}_{\nu=1}^{\gamma}$ thus produces the sum corresponding to the element of $X_{\tilde{\iota}}$ obtained from our element of $X_{\iota}$ by adding the singleton $J_{\gamma+1}=\{d+1\}$.

But the operator $\partial_{i_{d+1}}$ can also act on the multiplier $\tilde{e}_{h_{\nu}-\kappa_{\nu}}(\{i_{q}|q \in J_{\nu}\}^{c})$ for some $1\leq\nu\leq\gamma$. If $i_{d+1}$ coincides with $i_{q}$ for some $q \in J_{\nu}$, then this function is independent of $x_{i_{d+1}}$ and the derivative is 0. When this is not the case, the derivative gives $\tilde{e}_{h_{\nu}-\kappa_{\nu}-1}(\{i_{q}|q \in J_{\nu}\}^{c} \setminus i_{d+1})/(N-\kappa_{\nu})$ as we saw above, and when we gather all of the terms obtained in this way, over every choice of $\{h_{\nu}\}_{\nu=1}^{\gamma}$, this yields the sum associated with the element of $X_{\tilde{\iota}}$ in which $d+1$ is added to the set $J_{\nu}$, thus keeping the condition that all the indices $i_{q}$ for $q$ in this set are distinct (and now it is not a singleton), and the other sets remain the same. When we combine everything together, and sum over all elements of $X_{\iota}$, we indeed obtain all the sums arising from all the elements of $X_{\tilde{\iota}}$. This proves the lemma.
\end{proof}

\begin{proof}[Proof of Theorem \ref{annbyders}]
The result is obvious for $d>r$ by the homogeneity degree of $u_{r}$, so assume $d<r$. We express $u_{r}$ as in Theorem \ref{coorddef}, and then for every $\lambda \vdash r$ the image of $\tilde{e}_{\lambda}$ showing up in $u_{r}$ under our operator is given by Lemma \ref{derdelambda}. At a point on the total diagonal, say where $x_{i}=a$ for every $i\in\mathbb{N}_{N}$, each term $\tilde{e}_{g}(I)$, of every set $I$, attains the value $\frac{a^{g}}{g!}$, so that the value of the product of $\tilde{e}_{\lambda-\sum_{\nu=1}^{\gamma}h_{\nu}\varepsilon_{h_{\nu}}}$ and of $\tilde{e}_{h_{\nu}-\kappa_{\nu}}(\{i_{q}|q \in J_{\nu}\}^{c})$ over $1\leq\nu\leq\gamma$ from that lemma attains at our point the same value as that of $\tilde{e}_{\lambda-\sum_{\nu=1}^{\gamma}\kappa_{\nu}\varepsilon_{h_{\nu}}}$. Note that since $\lambda \vdash r$ and $\sum_{\nu=1}^{\gamma}\kappa_{\nu}=d$, this expression corresponds to a partition of $r-d>0$. We thus fix one element of $X_{\iota}$ from Lemma \ref{derdelambda} (and with it the $J_{\nu}$'s and their cardinalities, the $\kappa_{\nu}$'s), multiply the expression arising from $\lambda$ by the coefficient $(-1)^{l(\lambda)-1}\big(l(\lambda)-1\big)!\big/\prod_{h=1}^{r}m_{h}!$ from Theorem \ref{coorddef}, sum over $\lambda$, fix a partition $\rho \vdash r-d$, with multiplicities $\{\mu_{h}\}_{h=1}^{r-d}$, and consider the resulting coefficient of the value of $\tilde{e}_{\rho}\big/\prod_{\nu=1}^{\gamma}\frac{N!}{(N-\kappa_{\nu})!}$ at our diagonal point.

Now, for fixed $\lambda$ and $\{h_{\nu}\}_{\nu=1}^{\gamma}$ such that $\lambda-\sum_{\nu=1}^{\gamma}\kappa_{\nu}\varepsilon_{h_{\nu}}=\rho$, denote by $\tilde{\mu}_{h}$ the multiplicity of $h$ in the partition $\lambda-\sum_{\nu=1}^{\gamma}h_{\nu}\varepsilon_{h_{\nu}}$. Then the product of the expressions $m_{h_{\nu}}-\sum_{\mu=1}^{\nu-1}\delta_{h_{\nu},h_{\mu}}$ cancels with the denominator $\prod_{h=1}^{r}m_{h}!$ to give a denominator of $\prod_{h}\tilde{\mu}_{h}!$. We also note that for each $\nu$ such that $\kappa_{\nu}=h_{\nu}$ we obtain a difference of 1 between $l(\rho)$ and $l(\lambda)$, and when $\kappa_{\nu}<h_{\nu}$ we obtain a difference of 1 between $\mu_{g_{\nu}}$ and $\tilde{\mu}_{g_{\nu}}$, where $g_{\nu}=h_{\nu}-\kappa_{\nu}$. Letting $\lambda$ and the $h_{\nu}$'s vary (but with the equality $\lambda-\sum_{\nu=1}^{\gamma}\kappa_{\nu}\varepsilon_{h_{\nu}}=\rho$ holding), for every $\nu$ we either run over all the values of $g_{\nu}$ (thus increasing the multiplicity of $g_{\nu}$, giving a multiplier of $\mu_{g_{\nu}}$ minus the $\delta$-symbols with the previous ones in the comparison between $\prod_{h}\tilde{\mu}_{h}!$ and $\prod_{h}\mu_{h}!$ in the numerator), or add 1 to $l(\rho)$ (and gives a multiplier in the comparison between $l(\rho)$ and the length $l(\lambda)-\gamma$ of $\lambda-\sum_{\nu=1}^{\gamma}h_{\nu}\varepsilon_{h_{\nu}}$).

Therefore, by taking out a multiplier of $(-1)^{l(\rho)-1}\big(l(\rho)-1\big)!\big/\prod_{h=1}^{r}\mu_{h}!$ (which we can do since $r-d>0$ and $\rho$ is non-trivial), we see that if $\beta$ of the indices $1\leq\nu\leq\gamma$ give numbers $g_{\nu}$ and the other ones increase the length, then the contribution from such $\lambda$'s combine to \[(-1)^{\beta}\binom{\gamma}{\beta}\prod_{t=0}^{\gamma-\beta-1}\big(l(\rho)+t\big)\cdot \sum_{g_{1}\geq1}\ldots\sum_{g_{\beta}\geq1}\prod_{\nu=1}^{\beta}\big(\mu_{g_{\nu}}-\textstyle{\sum_{\sigma=1}^{\nu-1}\delta_{g_{\nu},g_{\sigma}}}\big),\] where the binomial coefficient represents the choice of which indices increase the length. Now, the sum over $g_{\beta}$ gives $l(\rho)-\beta+1$ for every choice of the previous ones, then the sum over $g_{\beta-1}$ yields $l(\rho)-\beta+2$, and so forth, the total sum divided by $\beta!$ from the binomial coefficient is $\binom{l(\rho)}{\beta}$. Similarly, the product over $t$ and the sign combine with the denominator $(\gamma-\beta)!$ to the extended binomial coefficient $\binom{-l(\rho)}{\gamma-\beta}$. By summing over $\beta$ and recalling that $\sum_{\beta=0}^{\gamma}\binom{z}{\beta}\binom{w}{\gamma-\beta}$ equals $\binom{z+w}{\gamma}$ as extended binomial coefficients for every $z$ and $w$, we obtain $\gamma!\binom{0}{\gamma}$, which vanishes since we always have $\gamma\geq1$ when $d\geq1$. This shows that the combination coming from $\rho$ comes multiplied by a vanishing coefficient for every $\rho$, so that $\partial^{d}u_{r}\big/\prod_{q=1}^{d}\partial_{i_{q}}$ indeed vanishes at our point on the total diagonal. This proves the theorem.
\end{proof}

\begin{cor}
consider the point on the total diagonal where the common value of all the variables $x_{i}$, $i\in\mathbb{N}_{N}$ is $a$. Then at this point we have $u_{1}=a$ and $u_{r}=0$ for every $r\geq2$. The total diagonal is characterized algebraically in the $N$th symmetric power by the vanishing of $\{u_{r}\}_{r=2}^{N}$, over any field of characteristic 0. \label{coordiag}
\end{cor}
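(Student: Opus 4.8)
The plan is to prove Corollary \ref{coordiag} in two parts: first the explicit evaluation of the $u_r$'s at a diagonal point, then the algebraic characterization of the total diagonal.

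For the first part, I would evaluate $u_r$ at the point where $x_i=a$ for all $i\in\mathbb{N}_N$ using the explicit formula from Theorem \ref{coorddef}. Recall that at such a point each $\tilde{e}_h$ attains the value $\frac{a^h}{h!}$ (as noted in the normalization preceding Corollary \ref{actnoreh}), so $\tilde{e}_\lambda$ attains $a^r\big/\prod_{h=1}^r h!^{m_h}$ for $\lambda\vdash r$. For $u_1$ the only partition is $\lambda=(1)$, giving $\tilde{e}_1=a$ directly. For $r\geq2$ the cleanest route is to invoke Remark \ref{Bell}: since $u_r=-\sum_{t=1}^r \hat{B}_{r,t}(-\tilde{e}_1,\ldots,-\tilde{e}_{r-t+1})/t$, I would substitute $\tilde{e}_h=a^h/h!$ and use the generating-function identity for the ordinary Bell polynomials. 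Alternatively, and perhaps more transparently, I would use Theorem \ref{annbyders}: applying the single first-order operator $\partial_i$ to $u_r$ for $r\neq 1$ gives a function vanishing on the total diagonal; combined with homogeneity of degree $r$ and Euler's identity $\sum_i x_i\,\partial_i u_r = r\,u_r$, evaluating at the diagonal point where all $x_i=a$ forces $r\,u_r = a\sum_i\partial_i u_r = 0$, hence $u_r=0$ for $r\geq2$. This Euler-identity argument is the slicker one and avoids the Bell-polynomial computation entirely.

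For the second part---that the total diagonal is cut out exactly by $\{u_r=0\}_{r=2}^N$---I would argue as follows. A point of the $N$th symmetric power is determined by the values of $e_1,\ldots,e_N$, equivalently by the unordered multiset $\{x_1,\ldots,x_N\}$ as the roots of $\prod_i(T-x_i)$. Since $u_r=\tilde{e}_r+(\text{polynomial in }\tilde{e}_1,\ldots,\tilde{e}_{r-1})$, the map $(\tilde{e}_1,\ldots,\tilde{e}_N)\mapsto(u_1,\ldots,u_N)$ is a triangular polynomial change of coordinates with invertible Jacobian (over any field of characteristic $0$, since the denominators are factorials), so $\{u_r\}_{r=1}^N$ are genuine coordinates on the symmetric power. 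The total diagonal is the locus where all $x_i$ are equal, i.e.\ where $x_i=u_1$ for every $i$; the first part shows this locus satisfies $u_r=0$ for $r\geq 2$. Conversely I must show that $u_2=\cdots=u_N=0$ forces all the $x_i$ equal. The key step is to re-express the vanishing $u_r=0$ recursively in terms of the power sums or the $\tilde{e}_h$, showing it forces $\tilde{e}_h=\tilde{e}_1^h/h!$ for all $h$, which by the root-coefficient dictionary means $\prod_i(T-x_i)=(T-u_1)^N$, whence every $x_i=u_1$.

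The main obstacle I expect is this converse direction: translating the abstract vanishing $u_r=0$ into the concrete condition that the defining polynomial $\prod_i(T-x_i)$ is a pure $N$th power. The cleanest way around it is again the Euler/Bell route. Since $\tilde{e}_h=\tilde{e}_1^h/h!$ for all $h$ is equivalent to the generating identity $\sum_h \tilde{e}_h z^h = e^{\tilde{e}_1 z}$, and the $u_r$ are (up to the factorial normalizations) the coefficients in the logarithmic derivative of $\sum_h \tilde{e}_h z^h$, the vanishing of $u_r$ for $r\geq2$ is precisely the statement that this logarithmic derivative is the constant $\tilde{e}_1$, i.e.\ that the generating series is an exponential $e^{\tilde{e}_1 z}$. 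This forces the normalized elementary symmetric functions to take exactly their diagonal values, and hence $a:=\tilde{e}_1=u_1$ is the common value of all the $x_i$. The characteristic-$0$ hypothesis is used exactly here, to invert the factorials and to legitimize the exponential/logarithm manipulation; over a field of positive characteristic the factorials in the normalization would fail to be invertible and the characterization would break down.
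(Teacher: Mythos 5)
The first assertion you prove exactly as the paper does: the paper combines Euler's identity $\sum_{i=1}^{N}x_{i}\partial_{i}u_{r}=ru_{r}$ with Theorem \ref{annbyders} for $d=1$ to conclude $u_{r}=0$ at the diagonal point when $r\geq2$, and evaluates $u_{1}=\tilde{e}_{1}=e_{1}/N=a$ directly; this is precisely your preferred ``Euler-identity'' route (the Bell-polynomial evaluation you offer as an alternative is not what the paper does). Where you genuinely differ is the converse half of the second assertion. The paper handles it in one line: $\{u_{r}\}_{r=1}^{N}$ are independent generators of the ring of symmetric polynomials over any field of characteristic $0$ (Theorem \ref{egen} plus the triangular shape of the $u_{r}$'s), so a point of the symmetric power is determined by its $u$-coordinates; by the first assertion the diagonal point of value $a$ has coordinates $(a,0,\ldots,0)$, and these exhaust the locus $u_{2}=\cdots=u_{N}=0$, so that locus is exactly the diagonal. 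You instead prove concretely that $u_{2}=\cdots=u_{N}=0$ forces all the $x_{i}$ to be equal, via the generating-series identity $\sum_{r\geq1}u_{r}z^{r}\equiv\log\bigl(\sum_{h\geq0}\tilde{e}_{h}z^{h}\bigr)\pmod{z^{N+1}}$, which is exactly the content of Remark \ref{Bell} (or a direct expansion of $\log(1+w)$ with $w=\sum_{h\geq1}\tilde{e}_{h}z^{h}$): the vanishing gives $\sum_{h}\tilde{e}_{h}z^{h}\equiv e^{u_{1}z}$, hence $e_{h}=\binom{N}{h}u_{1}^{h}$ for all $h\leq N$, hence $\prod_{i}(T-x_{i})=(T-u_{1})^{N}$ and every $x_{i}$ equals $u_{1}$. (Pedantically, the coefficients of the logarithmic derivative you invoke are $ru_{r}$ rather than $u_{r}$; in characteristic $0$ this is immaterial.) Both arguments are sound. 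The paper's is shorter because it recycles the coordinate property already established after Theorem \ref{coorddef}; yours is more explicit, does not need injectivity of the $u$-coordinate map on points of the symmetric power, and has the pleasant byproduct of identifying $\sum_{r}u_{r}z^{r}$ as the logarithm of the generating series of the $\tilde{e}_{h}$'s, from which both halves of the corollary (the diagonal values $\tilde{e}_{h}=a^{h}/h!$ and the characterization of the diagonal) can be read off simultaneously.
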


\begin{proof}
As $u_{r}$ is homogenous of degree $r$, we have the equality $\sum_{i=1}^{N}x_{i}\partial_{i}u_{r}=ru_{r}$ as functions on $\mathbb{R}^{N}$. Now, Theorem \ref{annbyders} shows that if $r\geq2$ then $\partial_{i}u_{r}=0$ for all $i$ at our point, meaning that the left hand side vanishes at that point hence so does the right hand side, i.e., so does $u_{r}$. For $r=1$ both sides reduce to $\sum_{i=1}^{N}x_{i}/N$ (indeed, $u_{1}=\tilde{e}_{1}=\frac{e_{1}}{N}$, so that $\partial_{i}u_{1}=\frac{1}{N}$ as we saw in, e.g., the proof of Theorem \ref{annbyders}), which clearly attains the value $a$ at our point. The second assertion now follows from the first via the fact that $\{u_{r}\}_{r=1}^{N}$ are independent generators for the ring of symmetric polynomials over any such field, as we saw from Theorem \ref{egen}. This proves the corollary.
\end{proof}

\begin{rmk}
Neither the property from Corollary \ref{coordiag} nor that from Theorem \ref{annbyders} characterizes $\{u_{r}\}_{r=1}^{N}$, even up to scalar multiples. For seeing this we define, for a partition $\eta \vdash r$ with multiplicities $\{\mu_{h}\}_{h=1}^{r}$, the expression $u_{\eta}:=\prod_{h=1}^{r}u_{h}^{\mu_{h}}$ as usual. It is clear that $u_{\eta}$ is homogenous of degree $r$, and it follows from Corollary \ref{coordiag} that if $\mu_{h}\geq1$ for some $h\geq2$ (i.e., when $\eta$ is not the partition of length $r$ of $r$) then $u_{\eta}$ vanishes along the total diagonal. It follows that if $r\geq3$ and $\eta$ is any partition with $2 \leq l(\eta) \leq r-1$ then we can change $u_{r}$ by any multiple of $u_{\eta}$ and still get a set of coordinates with the property from that Corollary \ref{coordiag}. As for the derivatives as in Theorem \ref{annbyders}, take any $u_{\eta}$ for some $\eta \vdash r$ with $l(\eta)\geq2$ that does not contain the value 1 (the smallest such partition is that of 4 as $2+2$), and consider an operator of the form from Theorem \ref{annbyders} that has some degree $d<r$. As applying it to $u_{\eta}$ will give linear combinations of $\{u_{r}\}_{r=2}^{N}$ and their derivatives, all of which vanish along the total diagonal (by Theorem \ref{annbyders} and Corollary \ref{coordiag}), we deduce that such $u_{\eta}$ also has the property from Theorem \ref{annbyders}. A characterization of our coordinates $\{u_{r}\}_{r=1}^{N}$ via derivatives (along the total diagonal and elsewhere) is presented in Theorem \ref{ordwithN} below. \label{charorddiag}
\end{rmk}

\smallskip

Next we consider the dependence of our coordinates on $N$, and the variation with $N$. Recall that the classical definition of the ring of symmetric functions (say over $\mathbb{Q}$), in infinitely many variables, is given in \cite{[M]} and others as the direct sum over $d$ of the inverse limit of the spaces $\mathbb{Q}[x_{1},\ldots,x_{N}]_{d}^{S_{N}}$ of symmetric functions in $N$ variables that are homogenous of degree $d$, where the maps in the inverse limit construction are the ones going from $\mathbb{Q}[x_{1},\ldots,x_{N+1}]^{S_{N+1}}$ to $\mathbb{Q}[x_{1},\ldots,x_{N}]^{S_{N}}$ (as graded rings) and taking $x_{i}$ with $i \leq N$ to itself and $x_{n+1}$ to 0. We denote this ring, which is also graded by the degrees, by $\Lambda_{\mathbb{Q}}$ (since it is the extension of scalars from $\mathbb{Z}$ to $\mathbb{Q}$ from the ring denoted by $\Lambda$ in \cite{[M]}).

Now, for a fixed degree $d$, the maps used in the inverse limit construction for the $d$-homogenous part of $\Lambda_{\mathbb{Q}}$ are isomorphisms for every $N \geq d$. Therefore as maps of additive groups they can be inverted, thus producing the $d$-homogenous part of $\Lambda_{\mathbb{Q}}$ (and, in fact, all of $\Lambda_{\mathbb{Q}}$ at once) as a direct limit as well. The corresponding map from $\mathbb{Q}[x_{1},\ldots,x_{N}]_{d}^{S_{N}}$ to $\mathbb{Q}[x_{1},\ldots,x_{N+1}]_{d}^{S_{N+1}}$ takes the basic symmetric monomial $m_{\eta}(x_{1},\ldots,x_{N})$ associated with some partition $\eta$ of $d$ to $\frac{1}{N+1-l(\eta)}\sum_{\sigma \in S_{N+1}/S_{N}}\sigma(m_{\eta})$, as an element of $\mathbb{Q}[x_{1},\ldots,x_{N+1}]_{d}^{S_{N+1}}$. Indeed, the term associated with every $\sigma \in S_{N+1}/S_{N}$ is well-defined (because $m_{\eta}(x_{1},\ldots,x_{N})$ is already $S_{N}$-invariant), the sum is clearly $S_{N+1}$-invariant and is based only on monomials from $m_{\eta}$, and the multiplying coefficient is one over the number of summands $\sigma$ which produce any fixed monomial from $m_{\eta}$ (it is also the ratio between the sizes of the stabilizers of a monomial in $m_{\eta}$ in $S_{N}$ and in $S_{N+1}$). Using these maps we can view each $\mathbb{Q}[x_{1},\ldots,x_{N}]_{d}^{S_{N}}$ as contained in $\mathbb{Q}[x_{1},\ldots,x_{N+1}]_{d}^{S_{N+1}}$, or, more importantly, in $\Lambda_{\mathbb{Q}}$.

We set $\hat{u}_{r}:=\hat{u}_{r}^{(N)}:=\frac{N!}{(N-r)!}u_{r}$ (in $N$ variables), as well as $\hat{D}_{r}:=\frac{(N-r)!}{N!}D_{r}$, which equals the average $\sum_{I\subseteq\mathbb{N}_{N},\ |I|=r}D_{I}\big/\binom{N}{r}$ of the operators $D_{I}$ from Lemma \ref{DIexI} over all the relevant sets $I\subseteq\mathbb{N}_{N}$, and prove the following result.
\begin{prop}
The coordinates $\{\hat{u}_{r}^{(N)}\}_{r=1}^{N}$ generate the ring of symmetric functions in $N$ variables over $\mathbb{Q}$, and if we express a continuously differentiable symmetric function $\phi(x_{1},\ldots,x_{N})$ as $\hat{\varphi}(\hat{u}_{1},\ldots,\hat{u}_{N})$, then the derivative $\hat{\varphi}_{\hat{u}_{d}}$ equals $\hat{D}_{d}\phi$ for every $1 \leq d \leq N$. Moreover, for any $r\in\mathbb{N}$, the images of the symmetric function $\hat{u}_{r}^{(N)}$ in $\Lambda_{\mathbb{Q}}$ converges with $N$ to the symmetric function $(-1)^{r-1}\frac{p_{r}}{r}$, where $p_{r}$ is the usual power sum symmetric function. \label{Ntoinfty}
\end{prop}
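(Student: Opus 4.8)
The plan is to prove the three assertions of Proposition \ref{Ntoinfty} in turn, the first two being quick consequences of earlier results and the third being the substantive claim. For the generation statement, I would observe that $\hat{u}_{r}^{(N)}=\frac{N!}{(N-r)!}u_{r}$ differs from $u_{r}$ only by a nonzero rational scalar (since $r \leq N$), so the fact that $\{u_{r}\}_{r=1}^{N}$ generate the ring of symmetric polynomials over $\mathbb{Q}$ (established right after Theorem \ref{coorddef}) immediately transfers to $\{\hat{u}_{r}^{(N)}\}_{r=1}^{N}$. For the derivative formula, I would use the chain rule exactly as in the proof of Corollary \ref{difcoord}: writing $\phi=\hat{\varphi}(\hat{u}_{1},\ldots,\hat{u}_{N})$ and applying $\hat{D}_{d}$ gives $\sum_{r=1}^{N}\hat{D}_{d}\hat{u}_{r}\cdot\hat{\varphi}_{\hat{u}_{r}}$, so it suffices to check $\hat{D}_{d}\hat{u}_{r}=\delta_{d,r}$. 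But $\hat{D}_{d}\hat{u}_{r}=\frac{(N-d)!}{N!}\cdot\frac{N!}{(N-r)!}D_{d}u_{r}=\frac{(N-d)!}{(N-r)!}\delta_{d,r}$ by Theorem \ref{coorddef}, and the prefactor is $1$ precisely when $d=r$, so the normalization is exactly arranged to produce $\delta_{d,r}$.

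The heart of the proof is the limit claim. The plan is to start from the explicit formula in Theorem \ref{coorddef}, $u_{r}=\sum_{\lambda \vdash r}(-1)^{l(\lambda)-1}\frac{(l(\lambda)-1)!}{\prod_{h}m_{h}!}\tilde{e}_{\lambda}$, and to track what happens to each term under the embedding into $\Lambda_{\mathbb{Q}}$ as $N\to\infty$. Recalling $\tilde{e}_{h}=e_{h}/\big(h!\binom{N}{h}\big)$, so that $\hat{u}_{r}^{(N)}=\frac{N!}{(N-r)!}u_{r}$ rescales things, I would compute the leading-order behavior in $N$ of each summand. The key point is that for a partition $\lambda$ of length $l=l(\lambda)$, the normalized product $\tilde{e}_{\lambda}=\prod_h \tilde{e}_h^{m_h}$ carries a factor $\prod_h\big(h!\binom{N}{h}\big)^{-m_h}$, whose denominator grows like $N^{\sum_h h\,m_h}=N^{r}$ up to lower order only for the longest partition, whereas for shorter partitions the growth is slower. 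After multiplying by $\frac{N!}{(N-r)!}\sim N^{r}$, I expect every summand with $l(\lambda)<$ (something) to vanish in the limit and only the contribution governed by the length-$1$ structure of the power sum to survive. Concretely I would argue that, in $\Lambda_{\mathbb{Q}}$, the image of $\hat{u}_{r}^{(N)}$ is a fixed polynomial in the $e_h$'s (independent of $N$ once $N\geq r$ via the direct-limit identification), and then identify its limit using the classical Newton-type identity expressing the power sum $p_r$ in terms of elementary symmetric functions.

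For this identification I would invoke Remark \ref{Bell}, which records that $u_{r}=-\sum_{t=1}^{r}\hat{B}_{r,t}(-\tilde{e}_{1},\ldots,-\tilde{e}_{r-t+1})/t$ in terms of partial ordinary Bell polynomials. The plan is to compute $\lim_{N\to\infty}\tilde{e}_{h}\cdot N^{h}/h!$, or more precisely the limit of the rescaled variables entering the Bell polynomial, and to use the generating-function identity $\sum_{r\geq1}(-1)^{r-1}\frac{p_r}{r}T^r=\log\big(1+\sum_{h\geq1}e_h T^h\big)$, together with the fact that the ordinary Bell polynomials are exactly the coefficients in the expansion of $\log(1+\sum z_h T^h)$. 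Matching the two expansions term by term, after carrying through the $N\to\infty$ normalization so that the relevant coefficients of $\tilde{e}_h$ converge to those of $e_h$ with the right scaling, should produce the stated limit $(-1)^{r-1}p_r/r$. The main obstacle I anticipate is bookkeeping the asymptotics of the normalizing constants $\frac{N!}{(N-r)!}\prod_h\big(h!\binom{N}{h}\big)^{-m_h}$ across all partitions $\lambda\vdash r$ and verifying that the subleading terms genuinely vanish in the limit rather than conspiring to contribute; organizing this cleanly is where I would spend the most care, and using the logarithmic generating function to bypass the partition-by-partition estimate is the cleanest route.
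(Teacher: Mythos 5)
Your handling of the first two assertions is correct and is essentially the paper's argument (the paper simply notes they follow from the scalar relation between $\hat{u}_{r},u_{r}$ and $\hat{D}_{d},D_{d}$ via Theorem \ref{coorddef} and Corollary \ref{difcoord}; your explicit check $\hat{D}_{d}\hat{u}_{r}=\frac{(N-d)!}{(N-r)!}\delta_{d,r}$ is exactly the point). Your endgame for the limit---Remark \ref{Bell} together with the Newton/logarithmic generating-function identity---is also the paper's. The problem is the asymptotic analysis in the middle, which contains a claim that is false and would derail the proof if followed.

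You assert that the denominator $\prod_{h}\big(h!\binom{N}{h}\big)^{m_{h}}$ of $\tilde{e}_{\lambda}$ grows like $N^{r}$ \emph{only} for the longest partition, with slower growth for shorter ones, and consequently that after multiplying by $\frac{N!}{(N-r)!}$ all summands but one should vanish in the limit. This is wrong, and in fact contradicts the exponent you yourself computed: $\sum_{h}hm_{h}=r$ is the \emph{weight} of $\lambda$, not its length, so for \emph{every} $\lambda\vdash r$ the denominator $P_{\lambda}(N)=\prod_{h}\big(N!/(N-h)!\big)^{m_{h}}$ is a monic polynomial of degree $r$ in $N$, and the quotient $\frac{N!/(N-r)!}{P_{\lambda}(N)}$ tends to $1$ for every partition. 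Hence no summand vanishes: the limit of $\hat{u}_{r}^{(N)}$ is the full formula of Theorem \ref{coorddef} with each $\tilde{e}_{\lambda}$ replaced by $e_{\lambda}$, and it is this entire combination---not a single surviving term---that the Newton identities identify with $(-1)^{r-1}\frac{p_{r}}{r}$ (e.g.\ for $r=2$ the limit is $e_{2}-\tfrac{1}{2}e_{1}^{2}=-\tfrac{1}{2}p_{2}$, with both terms contributing). Relatedly, your parenthetical claim that the image of $\hat{u}_{r}^{(N)}$ in $\Lambda_{\mathbb{Q}}$ is a fixed polynomial in the $e_{h}$'s independent of $N$ is also false (it would make the convergence statement vacuous, and it contradicts your own vanishing claim): the coefficients are rational functions of $N$, and, as the paper remarks right after the proposition, equality with the limit holds for no finite $N$ once $r\geq2$. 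The repair is exactly the observation above---quotient of two monic degree-$r$ polynomials tends to $1$ for each $\lambda$ separately---after which Remark \ref{Bell} and the Newton identities finish the proof; equivalently, your rescaling $N^{h}\tilde{e}_{h} \to e_{h}$ inside the Bell polynomials works, but note that under it every monomial survives, none vanish.
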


\begin{proof}
The fact that each $\hat{u}_{r}$ is a scalar multiple of the corresponding $u_{r}$, and that $\hat{D}_{r}$ is $D_{r}$ divided by the same scalar, makes the first two statements immediate consequences of Theorem \ref{coorddef} (via Theorem \ref{egen}) and Corollary \ref{difcoord}.

Now, the dependence of $u_{r}$ on $N$ is via the definition of the normalized expressions $\tilde{e}_{h}$ and $\tilde{e}_{\lambda}$. Since $\tilde{e}_{h}$ was obtained from $e_{h}$ by dividing by the monic polynomial $\frac{N!}{(N-h)!}=\prod_{j=0}^{h-1}(N-j)$ of degree $h$ in $N$ means, it follows that when $\lambda \vdash r$, the expression $\tilde{e}_{\lambda}$ is $e_{\lambda}$ divided a the monic polynomial of degree $r$ in $N$, namely $P_{\lambda}(N):=\prod_{h=1}^{N}\prod_{j=0}^{h-1}(N-j)^{m_{h}}=\prod_{j=0}^{h-1}(N-j)^{\sum_{h=j+1}^{N}m_{h}}$. Therefore in $\hat{u}_{r}$ each expression $e_{\lambda}$ with $\lambda \vdash r$ is multiplied by the coefficient $(-1)^{l(\lambda)-1}\big(l(\lambda)-1\big)!\big/\prod_{h=1}^{r}m_{h}!$ from Theorem \ref{coorddef}, as well as by the quotient $\frac{N!/(N-r)!}{P_{\lambda}(N)}$ of two monic polynomials of degree $r$ in $N$. Since the limit of every such quotient as $N\to\infty$ exists and equals to 1, the convergence of $\hat{u}_{r}^{(N)}$ in $\Lambda_{\mathbb{Q}}$ follows.

It is also clear that the limit of the $\hat{u}_{r}^{(N)}$'s is given by the same formula from Theorem \ref{coorddef}, but with each $\tilde{e}_{\lambda}$ replaced by the ordinary $e_{\lambda}$. Moreover, Remark \ref{Bell} allows us to write this symmetric function as $-\sum_{t=1}^{r}\hat{B}_{r,t}(-e_{1},\ldots,-e_{r-t+1})/t$. But the Newton identities express $p_{r}$ as $(-1)^{r}r\sum_{t=1}^{r}\hat{B}_{r,t}(-e_{1},\ldots,-e_{r-t+1})/t$, so that this limit function is indeed the asserted one. This proves the proposition.
\end{proof}
We remark that the elementary symmetric function $e_{r}$ itself, associated with the partition of length 1 of $r$, is multiplied in $\hat{u}_{r}^{(N)}$ by 1 for every $N$ (as both the multiplier from Theorem \ref{coorddef} and the quotient of the two monic polynomials reduce to 1 for this partition). It is important to note that while the $\hat{u}_{r}^{(N)}$ tend with $N$ to $(-1)^{r-1}\frac{p_{r}}{r}$, and the equality with that limit holds for any $N$ if $r=1$, such an equality occurs for no $N$ when $r\geq2$. For example, $\hat{u}_{2}^{(N)}$ equals $e_{2}-\frac{(N-1)}{2N}e_{1}^{2}$, which for $N=2$ and the variables $x$ and $y$ becomes $\frac{xy}{2}-\frac{x^{2}+y^{2}}{4}=(x-y)^{2}/4$ (giving a multiple of the coordinate form Lemma 2.3 of \cite{[EKZ]}), and when $N=3$ (and the third variable $z$) it reduces to $(xy+xz+yz-x^{2}-y^{2}-z^{2})/3$, neither of which are multiples of $p_{2}$.

\smallskip

Recall from Remark \ref{charorddiag} that the properties from Theorem \ref{annbyders} and Corollary \ref{coordiag} are not sufficient for characterizing the $u_{r}$'s up to scalar multiples. For doing so, we consider derivatives like from those results, but now of order $r$. Since applying a derivative of order $r$ to a symmetric function of degree $r$ yields a constant, we may consider the behavior of this constant as a function of $N$, when the symmetric function is given in terms of expressions that depend on $N$, like the $\tilde{e}_{\lambda}$'s from our formulae. The proof of Proposition \ref{Ntoinfty} shows that the $\Lambda_{\mathbb{Q}}$-images of the $u_{r}$'s themselves, written as $u_{r}^{(N)}$ for emphasizing the dependence on $N$, decay like $O\big(\frac{1}{N^{r}}\big)$ as $N\to\infty$ there. This is why the normalization $\hat{u}_{r}^{(N)}$ can be more appropriate in some situations (see also Corollary \ref{totdiag} and Theorem \ref{coorgenpt} below).

Using this property we obtain the following characterization of our coordinates $\{u_{r}\}_{r=1}^{N}$, up to scalars.
\begin{thm}
For every $r\geq1$, the expression $u_{r}$ is the only linear combination of $\{\tilde{e}_{\lambda}\}_{\lambda \vdash r}$ with coefficients that do not depend on $N$ such that its image under every differential operator $\prod_{q=1}^{r}\partial_{i_{q}}$ of order $r$ in which not all the indices $i_{q}$, $1 \leq q \leq r$ decays, in the limit $N\to\infty$, as $O\big(\frac{1}{N^{r+1}}\big)$, normalized such that $\partial_{i}^{r}u_{r}=(-1)^{r-1}\frac{(r-1)!}{N^{r}}$ for some, hence any, $i\in\mathbb{N}_{N}$. \label{ordwithN}
\end{thm}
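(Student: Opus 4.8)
The plan is to pass to the leading order in $1/N$ and reduce the whole statement to a computation with power sums. Write a candidate as $v_{r}=\sum_{\lambda \vdash r}c_{\lambda}\tilde{e}_{\lambda}$ with $N$-independent coefficients $c_{\lambda}$, and recall from the proof of Proposition \ref{Ntoinfty} that $\tilde{e}_{\lambda}=e_{\lambda}/P_{\lambda}(N)$ with $P_{\lambda}$ monic of degree $r$ in $N$. Hence $v_{r}=\frac{1}{N^{r}}\sum_{\lambda}c_{\lambda}e_{\lambda}+O\big(\frac{1}{N^{r+1}}\big)$ as a polynomial in the $x_{i}$'s whose coefficients are expanded in $1/N$. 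Since every operator $\prod_{q=1}^{r}\partial_{i_{q}}$ is independent of $N$ and sends each $e_{\lambda}$ with $\lambda \vdash r$ to a constant that does not depend on $N$ (each $e_{h}$ being linear in every variable, this constant counts configurations among finitely many of the variables; it is also the leading coefficient extracted from Lemma \ref{derdelambda} at $d=r$), applying it commutes with the expansion, and the constant $\prod_{q=1}^{r}\partial_{i_{q}}v_{r}$ has leading term $\frac{1}{N^{r}}\prod_{q=1}^{r}\partial_{i_{q}}f$ with $f:=\sum_{\lambda}c_{\lambda}e_{\lambda}$. Because $f$ is symmetric, this constant depends only on the coincidence pattern of the indices, i.e. on the partition $\mu \vdash r$ recording the multiplicities of the distinct values taken by the $i_{q}$'s; write $L_{\mu}(f)$ for it. The pure operator $\partial_{i}^{r}$ corresponds to $\mu=(r)$ (all indices equal), every other $\mu$ being ``mixed''; and since each $e_{h}$ is linear in each variable one checks directly that $\partial_{i}^{r}\tilde{e}_{\lambda}=\delta_{\lambda,(1^{r})}r!/N^{r}$ exactly. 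Thus the hypotheses translate into $L_{\mu}(f)=0$ for every $\mu\neq(r)$ together with the normalization $L_{(r)}(f)=(-1)^{r-1}(r-1)!$.

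The heart of the argument is then the claim that the functionals $\{L_{\mu}\}_{\mu \vdash r}$ form a basis of the dual of the degree-$r$ part of $\Lambda_{\mathbb{Q}}$. I would prove this by evaluating them on the power-sum basis $\{p_{\nu}\}_{\nu \vdash r}$. Writing $p_{\nu}=\prod_{k}p_{\nu_{k}}$ and distributing the order-$r$ derivative by Leibniz' rule, each factor $p_{\nu_{k}}=\sum_{i}x_{i}^{\nu_{k}}$ is a sum of one-variable monomials, so it can absorb derivatives with respect to at most one variable, and in order for the total to be a nonzero constant it must be differentiated all the way down to degree $0$, i.e. receive exactly $\nu_{k}$ derivatives in a single variable. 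Grouping the factors by the variable assigned to them then forces the parts of $\nu$ to split into groups whose sums are the parts of $\mu$; in other words $L_{\mu}(p_{\nu})=0$ unless $\nu$ refines $\mu$ (so in particular $l(\nu)\geq l(\mu)$), while the diagonal value $L_{\mu}(p_{\mu})=\prod_{h}m_{h}!\,(h!)^{m_{h}}$ is nonzero. Ordering partitions by refinement (or merely by length) therefore makes the matrix $\big[L_{\mu}(p_{\nu})\big]$ triangular with nonvanishing diagonal, hence invertible, which is precisely the asserted basis property.

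With this in hand the theorem follows. The basis property shows that $f$ is uniquely determined by the prescribed values $L_{\mu}(f)$, and the element $f=(-1)^{r-1}\frac{p_{r}}{r}$ realizes them: as $(r)$ refines only itself, $L_{\mu}(p_{r})=\delta_{\mu,(r)}r!$, so $L_{\mu}\big((-1)^{r-1}\frac{p_{r}}{r}\big)$ equals $0$ for $\mu\neq(r)$ and $(-1)^{r-1}(r-1)!$ for $\mu=(r)$, as required. On the other hand, Proposition \ref{Ntoinfty} identifies $\sum_{\lambda}c_{\lambda}^{(u)}e_{\lambda}$, where the $c_{\lambda}^{(u)}$ are the coefficients of $u_{r}$ from Theorem \ref{coorddef}, precisely as $(-1)^{r-1}\frac{p_{r}}{r}$; thus $u_{r}$ has the required leading part and is a solution, while any solution $v_{r}$ satisfies $\sum_{\lambda}c_{\lambda}e_{\lambda}=(-1)^{r-1}\frac{p_{r}}{r}=\sum_{\lambda}c_{\lambda}^{(u)}e_{\lambda}$. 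Since the $e_{\lambda}$ with $\lambda \vdash r$ are linearly independent (Theorem \ref{egen}), $c_{\lambda}=c_{\lambda}^{(u)}$ for all $\lambda$, i.e. $v_{r}=u_{r}$.

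The step I expect to require the most care is the evaluation of $L_{\mu}(p_{\nu})$ and the resulting triangularity. It is tempting to guess that the matrix is diagonal, but this is \emph{false}: the derivatives coming from a single repeated index may split across several factors $p_{\nu_{k}}$ (for instance $\partial_{i}^{2}p_{1}^{2}=2\neq0$ although $(2)\neq(1,1)$), so $L_{\mu}(p_{\nu})$ can be nonzero whenever $\nu$ refines $\mu$, not only when $\nu=\mu$. Getting the refinement direction and the nonvanishing of the diagonal right is exactly what makes the functionals a basis. The only other point needing attention is the asymptotic bookkeeping of the first paragraph, namely that $\prod_{q}\partial_{i_{q}}e_{\lambda}$ is genuinely $N$-independent and is the coefficient of $1/N^{r}$, so that ``decays as $O(1/N^{r+1})$'' is equivalent to its vanishing; this can be underpinned, if one wishes, by the explicit expansion of Lemma \ref{derdelambda} specialized to $d=r$.
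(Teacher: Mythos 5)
Your proposal is correct, and its outer skeleton (reduce every order-$r$ derivative to its coefficient of $1/N^{r}$, observe that the pure derivative is exactly $c_{(1^{r})}r!/N^{r}$ so the normalization fixes $c_{(1^{r})}=(-1)^{r-1}/r$, and invoke Proposition \ref{Ntoinfty} to bring in $(-1)^{r-1}p_{r}/r$) matches the paper's. But the heart of the argument---showing that the leading-order linear conditions pin down the combination---is done by a genuinely different mechanism. The paper specializes Lemma \ref{derdelambda} to $d=r$: the pattern-$\sigma$ derivative of $\tilde{e}_{\lambda}$ contributes $|X_{\iota}^{\lambda}|\prod_{h}m_{h}!\big/P_{\lambda}(N)$, the key combinatorial fact being that $|X_{\iota}^{\lambda}|>0$ if and only if $\lambda\leq\sigma^{t}$ in the dominance order; uniqueness is then an induction along dominance in the $\tilde{e}_{\lambda}$-coefficient basis, with existence proved separately from Proposition \ref{Ntoinfty}. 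You instead pair the leading functionals $L_{\mu}$ against the power-sum basis $\{p_{\nu}\}_{\nu\vdash r}$, where a direct Leibniz computation gives $L_{\mu}(p_{\nu})=0$ unless $\nu$ refines $\mu$, together with $L_{\mu}(p_{\mu})=\prod_{h}m_{h}!\,(h!)^{m_{h}}\neq0$, so that refinement-triangularity (or just ordering by length) makes $\{L_{\mu}\}_{\mu\vdash r}$ a dual basis; uniqueness and existence then fall out together, since the prescribed values are visibly realized by $(-1)^{r-1}p_{r}/r$, and the linear independence of the $e_{\lambda}$'s from Theorem \ref{egen} transfers the identity of leading parts to an identity of coefficients, hence $v_{r}=u_{r}$. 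Your route buys a shorter and more classical argument that bypasses Lemma \ref{derdelambda} and the dominance bookkeeping entirely, derivatives of products of power sums being far easier to control than those of the $e_{\lambda}$'s; the paper's route buys exact finite-$N$ expressions for all the order-$r$ derivatives, which is precisely the information needed for the finer decay analysis proposed in Conjecture \ref{decay}. Your closing caution about non-diagonality (e.g.\ $\partial_{i}^{2}p_{1}^{2}=2$) is exactly the right subtlety, and you identify the refinement direction correctly, so the triangularity claim is sound.
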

Explicit calculations suggest that for some derivatives the decay is even faster than the one established in Theorem \ref{ordwithN}---see Conjecture \ref{decay} below.

\begin{proof}
As in the proof of Theorem \ref{coorddef}, we begin with proving uniqueness, and then show that the $u_{r}$'s satisfy the required property.

Now, for evaluating the image of a combination of the $\tilde{e}_{\lambda}$'s under such a differential operator, we can again apply Lemma \ref{derdelambda}. Moreover, the same argument from the proof of Theorem \ref{annbyders} shows that for a fixed $\lambda$, we must take the $\kappa_{\nu}$'s such that $\lambda-\sum_{\nu=1}^{\gamma}\kappa_{\nu}\varepsilon_{h_{\nu}}$ gives a partition of $r-d$, where here $d=r$ so that we only have the empty partition. Thus $\lambda-\sum_{\nu=1}^{\gamma}h_{\nu}\varepsilon_{h_{\nu}}$ is also the empty partition, so that $\gamma=l(\lambda)$ and the $h_{\nu}$'s are those showing up in $\lambda$, and we also get $\kappa_{\nu}=h_{\nu}$ for every $1\leq\nu\leq\gamma=l(\lambda)$, so that the functions $\tilde{e}_{h_{\nu}-\kappa_{\nu}}$ are trivial. Therefore, every element of $X_{\iota}$ in which there are $l(\lambda)$ sets, with the set $J_{\nu}$ of indices having distinct $\iota$-images having size $h_{\nu}$ as shows up in $\lambda$ for every $1\leq\nu \leq l(\lambda)$ (in the decreasing ordering of the entries of $\lambda$, say), contributes to the expression from Lemma \ref{derdelambda} a single instance of $\prod_{\nu=1}^{l(\lambda)}\frac{m_{h_{\nu}}!}{N!/(N-h_{\nu})!}$, more simply expressed as $\prod_{h=1}^{r}\frac{m_{h}!}{N!/(N-h)!}=\big(\prod_{h=1}^{r}m_{h}!\big)\big/P_{\lambda}(N)$ in the notation from the proof of Proposition \ref{Ntoinfty}.

We therefore denote by $X_{\iota}^{\lambda}$ the subset of $X_{\iota}$ consisting of partitions of $\mathbb{N}_{d}$ into sets $\{J_{\nu}\}_{\nu=1}^{l(\lambda)}$ satisfying these conditions, and it is clear that if the indices $\{i_{q}\}_{q=1}^{r}$ have the coincidence relation such that there are $t$ distinct indices, say $j_{p}$, $1 \leq p \leq l$, and the index $j_{p}$ appears $s_{p}$ times among the $i_{q}$'s, then we express this coincidence pattern via that the partition $\sigma$ of $r$ as $\sum_{p=1}^{l}s_{p}$. It is clear that the size of $X_{\iota}^{\lambda}$ depends on $\iota$ only through the partition $\sigma$ (indeed, since our derivative of order $r$ takes any symmetric function of degree $r$ to a constant, we can evaluate this constant at any point, and applying such our derivative to any function $\phi$ gives, at such a point, the expression denoted by $\partial_{H}^{\sigma}\phi$ in the next section, where $H\subseteq\mathbb{N}_{N}$ is any subset containing all the $i_{q}$'s). Now, we have $|X_{\iota}^{\lambda}|>0$ if and only if the indices with pattern $\sigma$ can be organized in sets of disjoint indices whose sizes are determined by $\lambda$, which is easily seen to be equivalent to $\sigma^{t}$ dominating $\lambda$ in the natural dominance order on partitions of $r$, a statement that we shall denote by $\lambda\leq\sigma^{t}$.

Consider thus a linear combination $\sum_{\lambda \vdash r}c_{\lambda}\tilde{e}_{\lambda}$, with coefficients $\{c_{\lambda}\}_{\lambda \vdash r}$ that are independent of $N$, not all of which are 0. Take a differential operator $\prod_{q=1}^{r}\partial_{i_{q}}$, corresponding to $\iota:\mathbb{N}_{r}\to\mathbb{N}_{N}$ and thus to a partition $\sigma \vdash r$, and then its action sends our combination to $\sum_{\lambda \vdash r}c_{\lambda}|X_{\iota}^{\lambda}|\big(\prod_{h=1}^{r}m_{h}!\big)\big/P_{\lambda}(N)$, where we saw that we can restrict the sum to be taken only over $\lambda$ that are dominated by $\sigma^{t}$. Moreover, as all the numerators are independent of $N$, and all the denominators are monic polynomials of degree $r$ in $N$, this expression is $\sum_{\lambda\leq\sigma^{t}}c_{\lambda}|X_{\iota}^{\lambda}|\big(\prod_{h=1}^{r}m_{h}!\big)\big/N^{r}+O\big(\frac{1}{N^{r+1}}\big)$. We are therefore interested in those combinations $\sum_{\lambda \vdash r}c_{\lambda}\tilde{e}_{\lambda}$ for which the last numerator vanishes for every $\sigma$ which is not the maximal partition in terms of dominance, of length 1 (i.e., where $\sigma^{t}$ is not the minimal partition, of length $r$).

Consider first the case where $\sigma$ is such that $\sigma^{t}$ is one of the (possibly several) minimal partitions $\lambda \vdash r$ for which $c_{\lambda}\neq0$. Then our numerator is the sum over one non-zero element, meaning that the order of decay of our derivative with $N$ is precisely as $O\big(\frac{1}{N^{r}}\big)$. Under our assumption this is allowed only if $\sigma$ is the maximal partition, corresponding to the derivative of order $r$ with respect to a single variable, and therefore the multiplier of the element $\tilde{e}_{1}^{r}$ corresponding to the minimal partition cannot vanish. For this $\sigma$ the corresponding derivative is $\partial_{i}^{r}u_{r}$, sending any combination to $\frac{r!}{N^{r}}$ times the coefficient of $\tilde{e}_{1}^{r}$, so that in our normalization this coefficient has to be $(-1)^{r-1}/r$, and we have to see that our condition determines the remaining coefficients. We do this by induction using the dominance order, take a partition $\lambda \vdash r$ that is not the one that we just normalized, and assume that we have already determined all the coefficients $c_{\eta}$ for $\eta\leq\lambda$ except for $c_{\lambda}$ itself. As the vanishing of the numerator arising from the derivative associated with $\sigma=\lambda^{t}$ expresses a positive multiple of $c_{\lambda}$ in terms of the coefficients $c_{\eta}$ that we already know, the uniqueness statement follows (in fact, so does the existence, but without the explicit form of the combination satisfying it).

It thus remains to prove that our combination $u_{r}$ does have this property. After multiplying by $\frac{N!}{(N-r)!}$, this is equivalent, for rational functions of $N$, to all the derivatives of order $r$ of the expression $\hat{u}_{r}^{(N)}$ from Proposition \ref{Ntoinfty} vanishing as $N\to\infty$, and the derivative with respect to a single variable tending to $(-1)^{r-1}(r-1)!$. But that proposition shows that in this limit, $\hat{u}_{r}^{(N)}$ tends to $(-1)^{r}\frac{p_{r}}{r}$, so that its derivatives tend to those of the latter symmetric function. Recalling that $p_{r}$ is the power sum function $\sum_{i=1}^{N}x_{i}^{r}$ (i.e., it is a trace function in the terminology of \cite{[B]}), it is indeed annihilated by any mixed derivative, and the pure derivative indeed gives the required (non-vanishing) value. This completes the proof of the theorem.
\end{proof}

One can determine the expression for all the derivatives of order $r$ of $u_{r}$ for small $r$ explicitly. Based on doing so for all $r\leq6$ and $\iota:\mathbb{N}_{r}\to\mathbb{N}_{N}$ (or more precisely $\sigma \vdash r$), we pose the following conjecture
\begin{conj}
Let $\prod_{q=1}^{r}\partial_{i_{q}}$ be a differential operator of order $r$ that corresponds to a partition $\sigma \vdash r$ as above. Then the total expression of $\partial^{d}u_{r}\big/\prod_{q=1}^{d}\partial_{i_{q}}$ decays like $O\big(\frac{1}{N^{r+l(\sigma)-1}}\big)$ as $N\to\infty$. \label{decay}
\end{conj}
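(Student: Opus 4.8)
The plan is to turn the problem into a single generating-function identity and then into an asymptotic expansion of $u_r$ in powers of $1/N$. Introducing a formal variable $t$ and setting $\tilde E(t):=\sum_{h\geq0}\tilde e_ht^h$, the formula of Remark~\ref{Bell} is exactly the statement that $\sum_{r\geq1}u_rt^r=\log\tilde E(t)$. Since $\partial_{i_1}\cdots\partial_{i_r}u_r$ is a constant (the full order-$r$ derivative of a function of degree $r$), I may commute the operator past $[t^r]$ and evaluate at the origin $x_i=0$, where $\tilde E(t)=1$. Using that each $e_h$ is multilinear---so a block containing a repeated index is annihilated---together with the cumulant expansion of the logarithm, this produces the closed form
\[\partial_{i_1}\cdots\partial_{i_r}u_r=\sum_{\pi}(-1)^{|\pi|-1}\big(|\pi|-1\big)!\prod_{B\in\pi}\frac{1}{(N)_{|B|}},\qquad(N)_{k}:=\frac{N!}{(N-k)!},\]
where $\pi$ runs over the set partitions of the $r$ positions all of whose blocks $B$ are \emph{rainbow} for the coincidence pattern $\sigma$ (no two positions of a block carry the same index). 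Pulling out $N^{-r}$ makes the baseline $O(N^{-r})$ transparent, and the content of the conjecture becomes the vanishing of the first $l(\sigma)-1$ coefficients of the $1/N$-expansion of this sum.

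To control them I would produce an asymptotic expansion $u_r=\sum_{g\geq0}N^{-(r+g)}v_r^{(g)}$ with $v_r^{(g)}\in\Lambda_{\mathbb Q}$ homogenous of degree $r$; such an expansion exists because, as in the proof of Proposition~\ref{Ntoinfty}, each coefficient of $e_\lambda$ in $u_r$ is a ratio of two monic polynomials of degree $r$ in $N$. Writing $y=t/N$, $\vartheta=t\partial_t$, $E(y)=\prod_i(1+x_iy)$ and $\mathcal E=\log E=\sum_i\log(1+x_iy)$, one has $\tilde E(t)=e^{L(\vartheta)}E(y)$ with $L(\vartheta)=\log\big(N^{\vartheta}/(N)_{\vartheta}\big)=\sum_{m\geq1}Q_m(\vartheta)/(mN^m)$, where $Q_m$ is a polynomial in $\vartheta$ of degree $m+1$. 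The key algebraic step is the conjugation $e^{-\mathcal E}\vartheta\,e^{\mathcal E}=\vartheta+\Theta_1$ with $\Theta_1:=\vartheta\mathcal E$, which upgrades to $e^{-\mathcal E}e^{L(\vartheta)}e^{\mathcal E}=e^{L(\vartheta+\Theta_1)}$ and hence to
\[\log\tilde E(t)=\mathcal E+\log\!\big(e^{L(\vartheta+\Theta_1)}1\big).\]
Expanding the right-hand side gives, at each order $N^{-g}$, a polynomial $\mathcal L_g$ in the \emph{trace functions} $\Theta_j:=\vartheta^j\mathcal E=\sum_s(-1)^{s-1}s^{j-1}p_sy^s$, and after extracting $[t^r]$ (recalling $y=t/N$) one finds $v_r^{(g)}=[y^r]\mathcal L_g$, a $\mathbb Q$-combination of products of power sums with exactly one factor $p_\bullet$ for each factor $\Theta_\bullet$.

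The heart of the matter is then the bound $\deg_{\Theta}\mathcal L_g\leq g+1$, i.e.\ every $\mathcal L_g$ has degree at most $g+1$ in the $\Theta_j$. I would prove it by a linked-cluster count: the logarithm retains only connected contributions, in which $v$ factors $L_{m_1},\dots,L_{m_v}$ with $\sum_im_i=g$ are joined through the action of their $\vartheta$'s on one another's $\Theta_1$-insertions. A factor $L_{m_i}$ is a word of length at most $m_i+1$ in the symbols $\{\vartheta,\Theta_1\}$, so if $A$ counts the $\Theta_1$-symbols and $B$ the $\vartheta$-symbols then $A+B\leq\sum_i(m_i+1)=g+v$; the number of surviving trace factors is exactly $A$, while connectedness of the $v$ factors forces at least $v-1$ of the $\vartheta$'s to link distinct factors, giving $B\geq v-1$ and therefore $A\leq g+1$. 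Granting this, $v_r^{(g)}$ is a combination of products of at most $g+1$ power sums. The proof closes with the observation that a mixed derivative $\prod_p\partial_{j_p}^{s_p}$ in $l=l(\sigma)$ distinct variables annihilates every product of fewer than $l$ power sums: each $p_a=\sum_ix_i^a$ is a trace function with $\partial_j\partial_{j'}p_a=0$ for $j\neq j'$, so by Leibniz a product of $n$ power sums can absorb derivatives in at most $n$ distinct variables. Hence $\partial_{i_1}\cdots\partial_{i_r}v_r^{(g)}\equiv0$ whenever $g+1<l$, i.e.\ for $g\leq l-2$, and $\partial_{i_1}\cdots\partial_{i_r}u_r=\sum_{g\geq l-1}N^{-(r+g)}\partial_{i_1}\cdots\partial_{i_r}v_r^{(g)}=O\big(N^{-(r+l(\sigma)-1)}\big)$, as conjectured.

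The step I expect to be the main obstacle is making the linked-cluster count fully rigorous. Because a $\vartheta$ acts on a product of $\Theta$'s by the Leibniz rule---raising one factor rather than a designated one---the incidence ``which $\vartheta$ links which pair of factors'' must be set up with care before one can invoke the exponential formula to discard disconnected diagrams and apply the Euler inequality $\text{edges}\geq\text{vertices}-1$ that yields $B\geq v-1$. That genuine cancellations across different numbers of factors are unavoidable is already visible for $l(\sigma)=4$, where the degree-$4$ pieces of the one-factor and two-factor contributions to $\mathcal L_2$ cancel against each other; no term-by-term estimate can replace the connectedness argument. Once the count is justified the conjectured upper bound follows, and the same bookkeeping, applied to the single tree that links the $l(\sigma)$ colour classes, should further show that the exponent $r+l(\sigma)-1$ is sharp.
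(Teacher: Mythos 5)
You are proving Conjecture \ref{decay}, which the paper does not prove: it is posed as an open problem, the paper establishes only the weaker Theorem \ref{ordwithN} (decay $O(1/N^{r+1})$ for every mixed derivative of order $r$), and the finer exponent $r+l(\sigma)-1$ is explicitly left for future research. There is therefore no proof in the paper to compare yours against; what follows is an assessment of your proposal on its own terms.

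The steps that can be checked line by line are correct. The identity $\sum_{r\geq1}u_{r}t^{r}=\log\tilde{E}(t)$ is a restatement of Remark \ref{Bell}, via $\bigl(\sum_{h\geq1}z_{h}t^{h}\bigr)^{k}=\sum_{r}\hat{B}_{r,k}(z_{\bullet})t^{r}$. Your rainbow-partition formula for $\partial_{i_{1}}\cdots\partial_{i_{r}}u_{r}$ follows from Theorem \ref{coorddef} together with the multilinearity of the $e_{h}$'s, and after grouping the partitions $\pi$ by their type $\lambda$ it coincides with the expression $\sum_{\lambda\leq\sigma^{t}}c_{\lambda}|X_{\iota}^{\lambda}|\bigl(\prod_{h}m_{h}!\bigr)/P_{\lambda}(N)$ appearing in the paper's proof of Theorem \ref{ordwithN}. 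The expansion $u_{r}=\sum_{g\geq0}N^{-(r+g)}v_{r}^{(g)}$ with $N$-independent $v_{r}^{(g)}\in\Lambda_{\mathbb{Q}}$ exists for the reason you give; since $\partial^{\sigma}e_{\lambda}$ (writing $\partial^{\sigma}$ for $\prod_{q=1}^{r}\partial_{i_{q}}$) is an $N$-independent constant, the conjecture is indeed equivalent to the vanishing $\partial^{\sigma}v_{r}^{(g)}=0$ for all $g\leq l(\sigma)-2$. The conjugation $e^{-\mathcal{E}}e^{L(\vartheta)}e^{\mathcal{E}}=e^{L(\vartheta+\Theta_{1})}$ is a valid operator identity, and the closing lemma is right: an order-$r$ derivative involving $l$ distinct variables kills any product of fewer than $l$ power sums, because a factor $p_{a}$ receiving derivatives in two distinct variables dies, so the $l$ variables require pairwise disjoint nonempty sets of factors. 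I also verified the machinery in low order: $\mathcal{L}_{1}=\frac{1}{2}(\Theta_{2}+\Theta_{1}^{2}-\Theta_{1})$, hence $v_{2}^{(1)}=e_{2}$ and $v_{3}^{(1)}=p_{3}-p_{1}p_{2}$, in exact agreement with the values $\partial_{i}\partial_{j}u_{2}=1/(N^{2}(N-1))$ and $\partial_{i}^{2}\partial_{j}u_{3}=-2/(N^{3}(N-1))$ obtained from your rainbow formula; both are sharp instances of the conjectured exponent.

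The one genuine gap is the one you flag yourself: the bound $\deg_{\Theta}\mathcal{L}_{g}\leq g+1$ rests on a linked-cluster count that is asserted rather than proved, and in this non-commutative setting ($[\vartheta,\Theta_{j}]=\Theta_{j+1}$) the exponential formula cannot simply be cited. It can, however, be made rigorous along exactly the lines you indicate. Expand each factor $Q_{m_{i}}(\vartheta+\Theta_{1})$ into words of length between $1$ and $m_{i}+1$ (there is no constant term, as $Q_{m}(0)=0$), and then expand the normal ordering of $X_{m_{1}}\cdots X_{m_{v}}1$ into elementary terms in which every $\vartheta$-symbol has a designated target: either a specific $\Theta$-symbol to its right, whose index it raises by one, or the terminal constant $1$, which kills the term. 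Each surviving elementary term is a monomial of degree $A$ (the number of $\Theta_{1}$-symbols) in the $\Theta_{j}$'s, and carries a well-defined multigraph on the $v$ factors, with one edge for each $\vartheta$ acting outside its own factor. The value of a term with disconnected graph is the product of the values of its components, each computed as if it stood alone, and this decomposition is compatible with set partitions of the linearly ordered factor set; the standard moment--cumulant argument then identifies $\log\bigl(e^{L(\vartheta+\Theta_{1})}1\bigr)$ with the sum over connected terms, and connectedness forces at least $v-1$ linking $\vartheta$'s, giving $A\leq(g+v)-(v-1)=g+1$. So the missing step is a matter of careful write-up, not of substance, and once supplied your argument proves the conjectured bound. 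Two caveats: your final sentence about sharpness would need a separate lower-bound argument, which the conjecture as stated does not require; and note that your route differs from the refinement the paper sketches after Conjecture \ref{decay} (subtracting the parts associated with dominating partitions)---the cluster expansion handles the cancellations globally rather than order by order in the dominance filtration, which is precisely why it succeeds where term-by-term estimates cannot.
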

Note that while for Theorem \ref{ordwithN} it suffices to consider the limit of $\hat{u}_{r}^{(N)}$ as $N\to\infty$, for doing the finer analysis required for investigating Conjecture \ref{decay} in a similar way one needs to remove the parts associated with partitions that are dominating $\sigma$. As, for example, subtracting $(-1)^{r}\frac{p_{r}}{r}\big/N^{r}$ and subtracting $(-1)^{r}\frac{p_{r}}{r}\big/\frac{N!}{(N-r)!}$ from $u_{r}$ and multiplying by $N^{r+1}$ (or by $\frac{N!}{(N-r-1)!}$) would give different limits as $N\to\infty$, this seems to be a more delicate question, that is therefore left for future research.

\section{Derivatives at Diagonal Points \label{Diag}}

The expression $D_{d}\phi$ from Corollary \ref{difcoord} is only defined at points where the variables $x_{i}$, $i\in\mathbb{N}_{N}$ have distinct values. We now turn to evaluate these derivatives in case some (or all) of the values of the $x_{i}$'s coincide. We shall assume throughout that $\phi$ is a symmetric function of $N$ real variables $x_{i}$, $i\in\mathbb{N}_{N}$ (expressed as $\varphi(u_{1},\ldots,u_{N})$), and that $\phi$ (and $\varphi$) have the derivatives of every order that will show up. All the results will hold equally well for holomorphic symmetric functions of $N$ complex variables, but since smooth functions of complex variables can be non-holomorphic, the real case is more natural for presenting our results.

\smallskip

We begin by observing the symmetry in the derivatives of $\phi$ where points coincide.
\begin{lem}
Assume that $J\subseteq\mathbb{N}_{N}$ is a set of size $p$, and that we differentiate at a point where the values of all the variables $x_{i}$ with $i \in J$ is the same. Assign, to each $i \in J$, a multiplicity $h_{i}$, and set $g:=\sum_{i \in J}h_{i}$. Then the value of the derivative $\partial^{g}\phi\big/\prod_{i \in J}\partial x_{i}^{h_{i}}$ is invariant under the action of the symmetric group of $J$ on the sets of multiplicities. \label{dersym}
\end{lem}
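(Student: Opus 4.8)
The plan is to read off the statement directly from the symmetry of $\phi$ via the chain rule, the point being that a permutation supported on $J$ leaves the base point of differentiation fixed. First I would record the elementary chain-rule identity, valid for \emph{any} differentiable function $f$ of the $x_{i}$ (symmetric or not): let a permutation $\pi$ of $\mathbb{N}_{N}$ act on points by $(\pi\cdot x)_{k}=x_{\pi^{-1}(k)}$, and write $f\circ\pi$ for the function $x\mapsto f(\pi\cdot x)$. A one-line computation with $\partial_{i}x_{\pi^{-1}(k)}=\delta_{\pi(i),k}$ gives $\partial_{i}(f\circ\pi)=(\partial_{\pi(i)}f)\circ\pi$.

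Next, since $\phi$ is symmetric we have $\phi\circ\pi=\phi$ for every $\pi$, and hence $\partial_{i}\phi=(\partial_{\pi(i)}\phi)\circ\pi$. Applying the identity of the previous step repeatedly (now with $f$ running through the partial derivatives of $\phi$ that are produced along the way, which need not be symmetric themselves) I would obtain, by induction on $g=\sum_{i\in J}h_{i}$, the multi-index version
\[
\Big(\prod_{i\in J}\partial_{i}^{h_{i}}\Big)\phi=\Big[\Big(\prod_{i\in J}\partial_{\pi(i)}^{h_{i}}\Big)\phi\Big]\circ\pi=\Big[\Big(\prod_{j\in J}\partial_{j}^{h_{\pi^{-1}(j)}}\Big)\phi\Big]\circ\pi,
\]
where the last equality is just the reindexing $j=\pi(i)$, using that $\pi$ permutes $J$.

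I would then specialize $\pi$ to a permutation of $J$, extended by the identity outside $J$, and evaluate at the base point $x^{0}$ where $x_{i}^{0}=a$ for all $i\in J$. Because $\pi$ fixes every index outside $J$ and merely permutes the equal coordinates inside $J$, we have $\pi\cdot x^{0}=x^{0}$, so evaluating the displayed identity at $x^{0}$ removes the outer $\circ\pi$ and yields
\[
\frac{\partial^{g}\phi}{\prod_{i\in J}\partial x_{i}^{h_{i}}}(x^{0})=\frac{\partial^{g}\phi}{\prod_{i\in J}\partial x_{i}^{h_{\pi^{-1}(i)}}}(x^{0}).
\]
As $\pi$ ranges over all of the symmetric group of $J$, so does $\pi^{-1}$, and this is exactly the asserted invariance of the derivative under the action on the multiplicities.

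I expect no serious obstacle: the lemma is essentially a bookkeeping consequence of symmetry. The one point that requires care is the inductive step, where one must verify that the chain-rule identity of the first paragraph persists when $f$ is an arbitrary (non-symmetric) string of partial derivatives of $\phi$, so that the permutation $\pi$ can be pushed through the whole product $\prod_{i\in J}\partial_{i}^{h_{i}}$; once this is in place, the reindexing $j=\pi(i)$ and the verification that $\pi$ fixes $x^{0}$ are immediate.
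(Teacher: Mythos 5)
Your proof is correct, but it takes a genuinely different route from the paper's. The paper argues pointwise from the limit definition of the derivative: it first establishes the case $|J|=2$ with multiplicities $(h,0)$ by comparing difference quotients (symmetry of $\phi$ makes an increment $\delta$ added to $x_{i}$ and the same increment added to $x_{j}$ produce equal values at a point where $x_{i}=x_{j}$), and then reduces the general case to transpositions: since $S_{J}$ is generated by transpositions, it suffices to treat the swap of $i$ and $j$, which is done by applying the base case to the auxiliary function $\psi:=\partial^{g-h}\phi\big/\partial x_{i}^{h_{j}}\partial x_{j}^{h_{j}}\prod_{k\in J\setminus\{i,j\}}\partial x_{k}^{h_{k}}$ (with $h=h_{i}-h_{j}$), which is itself symmetric under interchanging $i$ and $j$. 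You instead prove the global equivariance identity $\partial_{i}(f\circ\pi)=(\partial_{\pi(i)}f)\circ\pi$ for arbitrary $f$, iterate it to push a whole permutation through the entire string of derivatives (the crucial point, which you correctly flag, being that the identity applies to the intermediate non-symmetric derivatives of $\phi$), and only at the end evaluate at the base point, which is fixed by $\pi$ precisely because $\pi$ permutes coordinates with equal values. Your approach handles an arbitrary permutation of $J$ in one stroke, with no reduction to transpositions and no auxiliary function, and it isolates transparently the two inputs: symmetry of $\phi$ gives $\phi\circ\pi=\phi$, and coincidence of values gives $\pi\cdot x^{0}=x^{0}$; it also yields the stronger statement that the two derivative functions agree identically after composing with $\pi$, not merely at the special point. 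What the paper's argument buys is that it stays at the level of derivatives as limits of difference quotients, so it is self-contained at the most elementary level of regularity. One shared caveat: both arguments implicitly use equality of mixed partials, yours in the reindexing step $j=\pi(i)$ (which reorders the operators in the product) and the paper's in forming $\psi$ and identifying $\partial_{i}^{h}\psi$ with the asserted derivative; this is covered by the standing smoothness assumption made at the start of Section \ref{Diag}.
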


\begin{proof}
Consider first the case where $J$ is the set $\{i,j\}$, of size 2. For every $\delta\neq0$, the value of $\phi$ obtained by adding $\delta$ to $x_{i}$ and leaving the other variables invariant coincides with the value attained when $x_{j}$ is taken to $x_{j}+\delta$ and the rest of the variables are left invariant (by symmetry). Therefore the limits defining the derivatives $\phi_{i}$ and $\phi_{j}$ at our point are the same, as desired. Similarly, the limit defining $\phi_{i}$ at the point where $x_{i}$ is replaced by $x_{i}+\delta$ coincides (again by symmetry) with the one producing $\phi_{j}$ with $x_{j}+\delta$ in place of $x_{j}$. As this is the case for all $\delta$, we obtain $\phi_{ii}=\phi_{jj}$. More generally, the value of $\partial_{i}^{h-1}\phi$ with $x_{i}+\delta$ instead of $x_{i}$ will be the same as that of $\partial_{j}^{h-1}\phi$ when $x_{j}$ is replaced by  $x_{j}+\delta$, so that by the taking the limit we established the result when $|J|=2$ and one of the multiplicities vanishes.

Let now $J$ and the multiplicities by general. Since the permutation group is generated by transpositions, take two indices $i$ and $j$ from $J$, and it suffices to prove that $\partial^{g}\phi\big/\prod_{i \in J}\partial x_{i}^{h_{i}}$ is invariant under the transposition interchanging $i$ and $j$. Assume, without loss of generality, that $h_{i} \geq h_{j}$, let $h:=h_{i}-h_{j}$, and consider the function $\psi:=\partial^{g-h}\phi\big/\partial x_{i}^{h_{j}}\partial x_{j}^{h_{j}}\prod_{k \in J\setminus\{i,j\}}\partial x_{k}^{b_{k}}$. Since $\phi$ and the differential operator taking it to $\psi$ are symmetric under interchanging $i$ and $j$, so is $\psi$, and thus the previous paragraph gives us $\partial_{i}^{h}\psi=\partial_{j}^{h}\psi$. But as the latter two functions are the asserted one $\partial^{g}\phi\big/\prod_{i \in J}\partial x_{i}^{h_{i}}$ and its image under our transposition respectively, the desired equality follows. This proves the lemma.
\end{proof}
Note that for second derivatives, Lemma \ref{dersym} gives $\phi_{ii}=\phi_{jj}$ when $x_{i}=x_{j}$ (as well as $\phi_{ik}=\phi_{jk}$ for a third, distinct index $k$), but these derivatives are not related to $\phi_{ij}$. As one example for this, let $\phi$ be a \emph{trace function} in the terminology of \cite{[B]}, i.e., a function of the form $\sum_{i=1}^{N}f(x_{i})$ for a function $f$ of a single variable. Then $\phi_{ij}=0$ but $\phi_{ii}$ and $\phi_{jj}$ equal $f''(x_{i})$ (and $x_{i}=x_{j}$), which need not vanish. As another example, take $\phi$ be the symmetric function $e_{h}$ for some $h\geq2$. Then it is clear (e.g., using Lemma \ref{expeh}) that $\phi_{ij}=e_{h-2}(x_{I^{c}})\neq0$ for $I=\{i,j\}$, while $\phi_{ii}=\phi_{jj}=0$.

It follows from Lemma \ref{dersym} that if $\sigma$ is the partition of the total order $g$ of the derivative mentioned there into the parts $\{h_{i}\}_{i \in J}$ (with $l(\sigma)\leq|J|$), written classically as $g=\sum_{q=1}^{l}h_{q}$ (ignoring the vanishing $h_{i}$'s, say), then by assigning the multiplicities $h_{q}$, $1 \leq q \leq l$ to arbitrary distinct indices from $J$ the resulting derivative $\partial^{g}\phi\big/\prod_{i \in J}\partial x_{i}^{h_{i}}$ of $\phi$ depends only on $J$ and $\sigma$, and not on the assignment. We can thus denote this derivative by $\partial_{J}^{\sigma}\phi$, and it is well-defined at points like in that lemma (we can define it using an arbitrary assignment, or using an average, at every point in $\mathbb{R}^{N}$, but we shall consider such expressions only under the assumption that the values of the variables $x_{i}$, $i \in J$ are all the same, where we now know it to be well-defined and independent of any additional data). In particular $\partial_{J}^{(1)}\phi$ is just $\phi_{i}$ for $i \in J$, where the superscript stands for the unique partition of 1, and this is clearly the only simple derivative of order 1 that we can define based on $J$. Similarly, if $J$ contains at at least two elements and $i$ and $j$ are in $J$ then the derivatives $\phi_{ij}$ and $\phi_{ii}$, whose difference was mentioned in the Introduction, can be written as $\partial_{J}^{(1,1)}\phi$ and $\partial_{J}^{(2)}\phi$ respectively. The combination of third derivatives that shows up there, when $|J|\geq3$, is therefore $\partial_{J}^{(1,1,1)}\phi-\frac{3}{2}\partial_{J}^{(2,1)}\phi+\frac{1}{2}\partial_{J}^{(3)}\phi$ in this notation.

\smallskip

The combinations of higher-order derivatives were obtained from the action of an operator like $D_{I}$ at points where some variables coincide, and due to the vanishing denominators, they can be obtained by taking the limit of the expressions for points with distinct variables as some variables tend to the value of others. We will see that when the values of $p$ variables coincide, we shall obtain, in general, derivatives of all orders $1 \leq g \leq p$, and the form of the $g$th derivative is always the same. This means that for every $g\in\mathbb{N}$ (recall that in this paper the notation $\mathbb{N}$ only includes positive integers, without 0, like the set $\mathbb{N}_{N}$ from above) there are fixed coefficients $\{c_{\sigma}\}_{\sigma \vdash g}$ such that wherever $J$ is a set of $p$ indices, with $p \geq g$, the derivatives of order $g$ of $\phi$ that are based on indices from $J$ is always the same combination $\partial_{J}^{g}\phi=\sum_{\sigma \vdash g}c_{\sigma}\partial_{J}^{\sigma}\phi$ of the derivatives $\partial_{J}^{\sigma}\phi$ with $\sigma \vdash g$. The derivative $\partial_{J}^{1}\phi$ is just $\partial_{J}^{(1,1)}\phi=\phi_{i}$ from above, $\partial_{J}^{2}\phi$ will be the difference $\partial_{J}^{(1,1)}\phi-\partial_{J}^{(2)}\phi$ from above, and the expression from the end of the previous paragraph will be $\partial_{J}^{3}\phi$.

For determining the explicit value of $\partial_{J}^{g}\phi$, we shall need some notation. Let a finite set $K$ of indices be given, as well as a non-negative integer $\nu$. We then define the set $A_{K,\nu}$ to be the set of tuples $(a,\vec{b})$, with $a\in\mathbb{N}$ and $\vec{b}=\{b_{k}\}_{k \in K}\in\mathbb{N}^{K}$, satisfying the equality $a+\sum_{k \in K}b_{k}=|K|+\nu+1$. Note that $K$ can be empty, and then $A_{\nu}$ only consists of the single 1-tuple with entry $a=\nu+1$. Also note that for $\nu=0$ the set $A_{K,\nu}$ contains again only one element, in which $a$ and all the $b_{k}$'s equal 1.

We now aim to prove the following result.
\begin{thm}
Assume that $I$ is a set containing $J$, with $|I|=d$ and $|J|=p\geq1$, and consider a point where the variables $x_{i}$, $i \in J$ all take the value $y$, and that the values of the variables $x_{i}$, $i \in I \setminus J$ are distinct from $y$ and from one another. At such a point set \[\partial_{J}^{g}\phi:=\sum_{\sigma \vdash g}(-1)^{g-l(\sigma)}\frac{g\cdot\big(l(\sigma)-1\big)!}{\prod_{h=1}^{g}h!^{m_{h}}m_{h}!}\partial_{J}^{\sigma}\phi\] for every $1 \leq g \leq p$ (independently of $I$), where for $\sigma \vdash g$ and $1 \leq h \leq p$ the symbol $m_{h}$ stands for the multiplicity with which $h$ appears in $\sigma$, and we have the equality \[D_{I}\phi=\sum_{k \in I \setminus J}\frac{\phi_{k}}{(y-x_{k})^{p}\prod_{l \in I\setminus(J\cup\{k\})}(x_{l}-x_{k})}+\sum_{(a,\vec{b}) \in A_{I \setminus J,p-1}}\frac{(-1)^{p-a}\partial_{J}^{a}\phi}{\prod_{k \in I \setminus J}(x_{k}-y)^{b_{k}}}.\] \label{expordp}
\end{thm}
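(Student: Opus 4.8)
The plan is to prove the identity by induction on $p=|J|$, after first peeling off the part of $D_I$ that is already regular at the given point. Writing $K:=I\setminus J$ and $D_I=\sum_{i\in I}\partial_i\big/\prod_{j\in I\setminus\{i\}}(x_j-x_i)$, the summands with index $k\in K$ stay finite at our point: there $\prod_{j\in J}(x_j-x_k)=(y-x_k)^p$, so these summands evaluate to precisely the first sum in the statement. All the difficulty is concentrated in the summands with $i\in J$, whose denominators $\prod_{j\in J\setminus\{i\}}(x_j-x_i)$ all vanish at the point. That the singular part has a finite limit follows from the antisymmetry-and-divisibility mechanism of Lemma \ref{DIexI}, applied now in the $J$-variables and using that $\phi$ is symmetric under permuting $\{x_i\}_{i\in J}$: multiplying the singular part by the $J$-Vandermonde yields a smooth antisymmetric function, hence the quotient is smooth across the $J$-diagonal. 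The theorem thus amounts to computing this limit and identifying it with the second sum.

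The base case $p=1$ is a direct check: then $J=\{i_0\}$, the set $A_{K,0}$ is the single tuple with $a=1$ and all $b_k=1$, one has $\partial_J^{1}\phi=\phi_{i_0}$, and the two displayed sums reduce exactly to the evaluation of $D_I$ written above. For the inductive step I would fix an index $i_0\in J$, put $J':=J\setminus\{i_0\}$ and $K':=K\cup\{i_0\}=I\setminus J'$, and evaluate $D_I\phi$ at the perturbed point $P_t$ where $x_{i_0}=y+t$ while $x_i=y$ for $i\in J'$ and the $x_k$, $k\in K$, keep their distinct values. At $P_t$ only $p-1$ of the variables coincide, so the induction hypothesis applies and expresses $D_I\phi(P_t)$ through the two sums attached to the pair $(J',K')$. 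Since $D_I\phi$ extends continuously across the $J$-diagonal by the mechanism above, the left-hand side tends as $t\to0$ to the value we want, and it remains to compute the limit of the right-hand side.

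The main computation is this limit. In the $(p-1)$-formula the terms that blow up as $t\to0$ are the $k'=i_0$ term of its first sum, of order $t^{-(p-1)}$, together with the whole of its second sum, of order $t^{-b_{i_0}}$; the remaining terms are regular and converge to the first sum of the target, the factor $(x_{i_0}-x_k)=y+t-x_k$ supplying the missing power of $y-x_k$. In each singular term I would Taylor-expand the numerator in powers of $t$, writing the coefficient of $t^m$ by applying $\partial_{i_0}^m$ at the full diagonal, and likewise expand the regular denominator factors $(y+t-x_k)^{-1}$. Here Lemma \ref{dersym} does essential work: it rewrites each mixed derivative $\partial_{i_0}^{m}\partial_{J'}^{\sigma}\phi$ as the symmetric derivative $\partial_J^{\sigma'}\phi$ of the enlarged set $J$, where $\sigma'$ is $\sigma$ with one extra part of size $m$. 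Extracting the coefficient of $t^{0}$ then attaches, to each target tuple $(a,\vec b)\in A_{K,p-1}$, a combination of derivatives $\partial_J^{\sigma'}\phi$ of order $a$: the purely iterated part $\partial_{i_0}^{a}\phi=\partial_J^{(a)}\phi$ comes from the $k'=i_0$ term of the first sum, and the genuinely mixed parts come from the second sum, together reassembling $\partial_J^{a}\phi$. The reindexing $A_{K',p-2}\to A_{K,p-1}$ is produced automatically by absorbing the $i_0$-slot into the derivative order, since both index sets share the total $|K|+p$.

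I expect the genuinely delicate step to be the coefficient bookkeeping, of which there are two halves. First, all negative powers of $t$ must cancel; this is the consistency that the continuity of $D_I\phi$ guarantees in advance, but one still wants to see it as an explicit identity among the $c_\sigma$, forced by the symmetry encoded in Lemma \ref{dersym}. Second, the surviving $t^0$ term must carry exactly the coefficients defining $\partial_J^{a}\phi$ in the statement, i.e. one must show that the product of the inherited sign $(-1)^{p-1-a}$, the Taylor factor $1/m!$, and the confluence factor collapses to $(-1)^{p-a}$ times $a\,(l(\sigma)-1)!\big/\prod_h h!^{m_h}m_h!$. This coefficient identity is the derivative-side analogue of the Bell-polynomial identity underlying Theorem \ref{coorddef}; verifying it (by summing over the ways a partition $\sigma'\vdash a$ can split a part onto $i_0$) is where the real content lies, while the two expansions organized by the induction are routine.
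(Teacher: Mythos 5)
Your plan follows the paper's own route almost step for step: the same induction on $p$, the same splitting into regular terms (which give the first sum, with the factor $y+t-x_k$ supplying the missing power) and singular terms (the $i_0$-term of the first sum together with the whole second sum), the same device of perturbing one coinciding variable to $y+t$ and computing the limit of the induction-hypothesis formula, and the same reindexing observation that $A_{K',p-2}$ and $A_{K,p-1}$ share the total $|K|+p$. Even replacing L'H\^opital's Rule by a Taylor expansion in $t$ is not a real departure: the acknowledgments indicate that an earlier draft of the paper used the Taylor expansion, and extracting the $t^{0}$ coefficient of the numerator against the denominator $t^{p}$ is equivalent to the $p$-fold L'H\^opital argument.

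The genuine gap is that you never prove the identity on which the induction actually closes. What you call ``the coefficient bookkeeping \ldots where the real content lies'' is precisely the paper's Lemma \ref{relsders}: that as $x_{j}\to y$ the combination $\partial_{j}^{g}\phi+\sum_{\mu=1}^{g-1}\frac{(g-1)!}{\mu!}(-1)^{g-\mu}\partial_{j}^{\mu}\partial_{J}^{g-\mu}\phi$ tends to $-(-1)^{g}(g-1)!\,\partial_{J\cup\{j\}}^{g}\phi$, an identity obtained by counting how each partition $\sigma\vdash g$ of length at least $2$ arises as $\tau+\mu\varepsilon_{\mu}$ and checking that all signs, factorials and multiplicities reproduce the coefficients $(-1)^{g-l(\sigma)}g\,(l(\sigma)-1)!/\prod_{h}h!^{m_{h}}m_{h}!$ defining $\partial_{J}^{g}\phi$. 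The paper then still needs Lemma \ref{derwithdenom} and Proposition \ref{LHopital} to run these identities, for all $g\leq p$ simultaneously and against the expansion of $1/\prod_{k\in K}(x_{k}-x_{j})$, into the two facts your outline takes for granted: the numerator vanishes to order $p-1$, and its $p$th derivative is exactly the sum over $A_{K,p}$. Asserting that you ``expect'' the inherited sign, the Taylor factor and the confluence factor to collapse to the stated coefficients is not a verification; these coefficients are forced, and the whole theorem lives in checking them. Secondarily, your fallback that the negative powers of $t$ ``must cancel by continuity'' is not free: dividing a finitely differentiable antisymmetric function by the $J$-Vandermonde loses a derivative per linear factor, so that argument needs on the order of $\binom{p}{2}$ derivatives of $\phi$, while the statement (and the paper's L'H\^opital proof, which establishes existence and value of the limit at once) involves only derivatives of order at most $p$. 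In short: right skeleton, but the core computation that constitutes the theorem is missing.
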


\begin{rmk}
The derivative $\partial_{J}^{\sigma}\phi$ is the value at our point of the image of $\phi$ under an operator $\prod_{q=1}^{g}\partial_{i_{q}}$ for some choice of $i_{q}$'s such that the associated map $\iota:\mathbb{N}_{g}\to\mathbb{N}_{n}$ as in Lemma \ref{derdelambda} has image in $J$ and corresponds to $\sigma$. Choosing $g$ distinct elements of $J$, say $j_{h}$, $1 \leq h \leq g$, and normalizing the maps $\iota$ yielding the different $\sigma \vdash r$ according to these $j_{h}$'s, this means that in the terminology from Remark \ref{Bell}, we can express the operator $\partial_{J}^{g}$ in terms of the usual (exponential) partial Bell polynomials $B_{g,t}$ in the derivatives $\partial_{j_{h}}$, $1 \leq h \leq g$ as $(-1)^{g}g\sum_{t=1}^{g}B_{g,t}(-\partial_{j_{1}},\ldots,-\partial_{j_{g-t+1}})$. \label{Bellexp}
\end{rmk}

We will prove Theorem \ref{expordp} by induction on $p$. For this we shall need the following preliminary calculations.
\begin{lem}
Let $\psi$ be a function of the variable $x_{j}$ that is differentiable at least $\nu$ times, and let $\{x_{k}\}_{k \in K}$ be distinct real numbers, where $K$ is a finite set of indices not containing $j$. Recall the set $A_{K,\nu}$ from above, and then the equality \[\frac{d^{\nu}}{dx_{j}^{\nu}}\frac{\psi'(x_{j})}{\prod_{k \in K}(x_{k}-x_{j})}=\sum_{(a,\vec{b}) \in A_{K,\nu}}\frac{\nu!\psi^{(a)}(x_{j})}{(a-1)!\prod_{k \in K}(x_{k}-x_{j})^{b_{k}}}\] holds wherever $x_{j}$ is different from the values of the $x_{k}$'s. \label{derwithdenom}
\end{lem}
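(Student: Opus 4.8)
The plan is to treat the quantity to be differentiated as a product of $|K|+1$ elementary factors and apply the general (multinomial) Leibniz rule, after which a clean cancellation of factorials produces exactly the claimed sum. Writing $t$ for $x_{j}$ and regarding the $x_{k}$, $k\in K$, as constants, I would set $f_{0}(t):=\psi'(t)$ and $f_{k}(t):=(x_{k}-t)^{-1}$ for each $k\in K$, so that the left-hand side is the $\nu$-th derivative of $f_{0}\prod_{k\in K}f_{k}$.

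First I would record the two relevant single-factor derivative formulas. Differentiating $\psi'$ simply raises the order, $f_{0}^{(n_{0})}=\psi^{(n_{0}+1)}(t)$, and a one-line induction gives $\frac{d^{n}}{dt^{n}}(x_{k}-t)^{-1}=n!\,(x_{k}-t)^{-(n+1)}$ for every $n\geq0$; here the two sign flips at each step, one from the power rule and one from the inner derivative of $x_{k}-t$ being $-1$, cancel, so the result stays positive and the factor $n!$ accumulates.

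Next I would invoke the general Leibniz rule
\[
\frac{d^{\nu}}{dt^{\nu}}\big(f_{0}\textstyle\prod_{k\in K}f_{k}\big)=\sum_{\substack{n_{0},n_{k}\geq0\\ n_{0}+\sum_{k}n_{k}=\nu}}\frac{\nu!}{n_{0}!\prod_{k}n_{k}!}\,f_{0}^{(n_{0})}\prod_{k}f_{k}^{(n_{k})},
\]
and substitute the two formulas above. Setting $a:=n_{0}+1$ and $b_{k}:=n_{k}+1$, the constraint $n_{0}+\sum_{k}n_{k}=\nu$ becomes $a+\sum_{k}b_{k}=|K|+\nu+1$ with $a,b_{k}\geq1$, which is precisely the defining condition of $A_{K,\nu}$. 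The multinomial weight is $\nu!/\big((a-1)!\prod_{k}(b_{k}-1)!\big)$, while each $f_{k}^{(n_{k})}$ supplies a factor $n_{k}!=(b_{k}-1)!$; these cancel the matching $(b_{k}-1)!$ in the denominator, leaving the clean weight $\nu!\,\psi^{(a)}(t)/(a-1)!$ on each tuple of $A_{K,\nu}$ and reproducing the asserted right-hand side.

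The crux, and the one step where I would be most careful, is exactly this cancellation of the $(b_{k}-1)!$ factors: any slip in the single-factor derivative of $(x_{k}-t)^{-1}$, whether in the power of the factorial or in the sign, would spoil it, so I would verify that formula by induction before using it. As a backup route, the same identity follows directly by induction on $\nu$: differentiating the $\nu$-case once more sends each tuple $(a,\vec{b})$ to $(a+1,\vec{b})$ (from $f_{0}$) and to the tuples $(a,\vec{b}+e_{k})$ with weight $b_{k}$ (from the $k$-th factor), and the coefficient of a fixed target tuple in $A_{K,\nu+1}$ reassembles, via the elementary identity $(a-1)+(\nu+2-a)=\nu+1$ together with $\sum_{k}b_{k}-|K|=\nu+2-a$, into $\frac{(\nu+1)!}{(a-1)!}$, completing the step.
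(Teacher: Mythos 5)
Your proposal is correct. Your primary route---writing the function as the product $f_{0}\prod_{k\in K}f_{k}$ with $f_{0}=\psi'$ and $f_{k}=(x_{k}-x_{j})^{-1}$, recording the single-factor derivatives $f_{0}^{(n_{0})}=\psi^{(n_{0}+1)}$ and $f_{k}^{(n)}=n!\,(x_{k}-x_{j})^{-(n+1)}$, and then applying the generalized (multinomial) Leibniz rule---is genuinely different from the paper's proof, which proceeds by induction on $\nu$: the paper assumes the formula for $\nu$, differentiates once more, tracks how each tuple of $A_{K,\nu}$ feeds into tuples of $A_{K,\nu+1}$ (raising $a$ by one via the numerator, or some $b_{k}$ by one with weight $b_{k}-1$ via a denominator), and sums the incoming contributions $(a-1)+\sum_{k\in K}(b_{k}-1)=\nu+1$ to recover the coefficient $(\nu+1)!/(a-1)!$. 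Your substitution $a=n_{0}+1$, $b_{k}=n_{k}+1$ converts the Leibniz constraint $n_{0}+\sum_{k}n_{k}=\nu$ exactly into the defining equation $a+\sum_{k\in K}b_{k}=|K|+\nu+1$ of $A_{K,\nu}$, and the cancellation of each $n_{k}!=(b_{k}-1)!$ against the multinomial denominator is correct, so the clean weight $\nu!/(a-1)!$ appears in a single step with no recursion. What your route buys is a closed-form derivation in which the coefficient never has to be reassembled; what the paper's induction buys is that it uses nothing beyond first derivatives of individual factors (no multinomial Leibniz formula), and its bookkeeping of entries incrementing by one is the same mechanism reused in the subsequent proofs of Lemma \ref{relsders} and Proposition \ref{LHopital}, which keeps the section stylistically uniform. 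Your backup induction is, in substance, identical to the paper's argument, down to the key identity $\sum_{k\in K}(b_{k}-1)=\nu+2-a$ for tuples of $A_{K,\nu+1}$.
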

Lemma \ref{derwithdenom} is clear when $K$ is empty, as it reduces to the immediate equality $\frac{d^{\nu}}{dx_{j}^{\nu}}\psi'(x_{j})=\psi^{(\nu+1)}(x_{j})$. However, we shall require it for non-empty $K$ later.

\begin{proof}
We argue by induction on $\nu$, with the case $\nu=0$ being trivial, since the set $A_{K,0}$ only contains the tuple with $a=1$ and $b_{k}=1$ for every $k \in K$ (note the initial derivative of $\psi$ in the numerator). Now, assume that the result holds for some $\nu$, and since differentiating the numerator in one summand for the $\nu$th derivative increases $a$ by 1 and differentiating $\frac{1}{(x_{k}-x_{j})^{b_{k}}}$ increases $b_{k}$ by 1, we indeed obtain a sum over the expressions associated with $A_{K,\nu+1}$, and we only have to check the multiplying coefficients. So take an element $(a,\vec{b}) \in A_{K,\nu+1}$, and we saw that the derivatives producing multiples of this term are obtained from terms associated with tuples in $A_{K,\nu+1}$ in which exactly one of the entries is one less than the corresponding entry of $(a,\vec{b})$, and the other entries are like those of $(a,\vec{b})$. Now, from the term with $b_{k}-1$ the derivative of $\frac{1}{(x_{k}-x_{j})^{b_{k}-1}}$ gives our term multiplied by $b_{k}-1$ (thus covering the case with $b_{k}=1$ not appearing in this construction), and all of these summands showed up in the $\nu$th derivative with the coefficient $\frac{\nu!}{(a-1)!}$. The last contribution is, when $a>1$, from the term with $a-1$, where there is no coefficient from the derivative but the multiplier is $\frac{\nu!}{(a-2)!}=\frac{\nu!}{(a-1)!}(a-1)$ (which indeed vanishes when $a=1$ and we can ignore this restriction as well). The total numerical coefficient is thus $\frac{\nu!}{(a-1)!}$ times $a-1+\sum_{k \in K}(b_{k}-1)$, and as the latter sum equals $\nu+|K|+2-1-|K|=\nu+1$ for $(a,\vec{b}) \in A_{K,\nu+1}$, we indeed obtain the desired coefficient $\frac{(\nu+1)!}{(a-1)!}$. Thus proves the lemma.
\end{proof}

\smallskip

Recall that the definition of $\partial_{J}^{p}\phi$ from Theorem \ref{expordp} remains the same when $J$ is increased to a larger set. In particular, the expressions $\partial_{J}^{h}\phi$ are defined for every $1 \leq h \leq p$. The next relation that we shall need is among these expressions.
\begin{lem}
Assume that all the variables $x_{i}$ with $i \in J$ equal $y$, and choose some $j$ which is not in $J$. Take some $1 \leq g \leq p$, and then as $x_{j} \to y$ as well, the limit of the expression $\partial_{j}^{g}\phi+\sum_{\mu=0}^{g-1}\frac{(g-1)!}{\mu!}(-1)^{g-\mu}\partial_{j}^{\mu}\partial_{J}^{g-\mu}\phi$ vanishes. Equivalently, the expression $\partial_{j}^{g}\phi+\sum_{\mu=1}^{g-1}\frac{(g-1)!}{\mu!}(-1)^{g-\mu}\partial_{j}^{\mu}\partial_{J}^{g-\mu}\phi$ takes the limit $-(-1)^{g}(g-1)!\partial_{J}^{g}\phi$ as $x_{j} \to y$ for every such $g$, and when we consider this expression as a multiple of $\partial_{J\cup\{j\}}^{g}\phi$, the latter statement holds also for $g=p+1$. \label{relsders}
\end{lem}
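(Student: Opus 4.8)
The plan is to prove one unified statement that covers all three assertions: as $x_{j}\to y$,
\[\partial_{j}^{g}\phi+\sum_{\mu=1}^{g-1}\frac{(g-1)!}{\mu!}(-1)^{g-\mu}\partial_{j}^{\mu}\partial_{J}^{g-\mu}\phi\longrightarrow-(-1)^{g}(g-1)!\,\partial_{J\cup\{j\}}^{g}\phi\]
for every $1\le g\le p+1$. The vanishing formulation follows for $g\le p$ by adding back the $\mu=0$ term $(-1)^{g}(g-1)!\partial_{J}^{g}\phi$, whose limit is $(-1)^{g}(g-1)!\partial_{J\cup\{j\}}^{g}\phi$ because the value of $\partial_{J}^{g}\phi$ is unchanged when $J$ is enlarged (Theorem \ref{expordp} together with Lemma \ref{dersym}); the case $g=p+1$ is exactly the last assertion, where $\partial_{J}^{p+1}\phi$ is undefined but $\partial_{J\cup\{j\}}^{p+1}\phi$ is. Working with this form treats everything at once and never forces me to write the undefined symbol.

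First I would pass to the limit term by term, which is legitimate since $\phi$ is smooth and each mixed derivative is continuous. Expanding $\partial_{J}^{g-\mu}\phi$ by its defining formula from Theorem \ref{expordp} as $\sum_{\rho\vdash g-\mu}c_{\rho}^{(g-\mu)}\partial_{J}^{\rho}\phi$ (writing $c_{\rho}^{(k)}$ for the coefficient of $\partial_{J}^{\rho}\phi$ in $\partial_{J}^{k}\phi$), the quantity $\partial_{j}^{\mu}\partial_{J}^{\rho}\phi$ is a simple mixed derivative whose index pattern is the partition $\rho\cup(\mu)$ obtained by adjoining a part $\mu$, supplied by the index $j\notin J$; by Lemma \ref{dersym} its value at the fully coincident point is $\partial_{J\cup\{j\}}^{\rho\cup(\mu)}\phi$, independent of the assignment of indices. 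Hence each $\partial_{j}^{\mu}\partial_{J}^{g-\mu}\phi$ tends to $\sum_{\rho\vdash g-\mu}c_{\rho}^{(g-\mu)}\partial_{J\cup\{j\}}^{\rho\cup(\mu)}\phi$, while the pure term $\partial_{j}^{g}\phi$ tends to $\partial_{J\cup\{j\}}^{(g)}\phi$. The limit is thus a fixed linear combination of the $\partial_{J\cup\{j\}}^{\sigma}\phi$, $\sigma\vdash g$, and since the target is by Theorem \ref{expordp} also such a combination, the whole statement reduces to matching, for each $\sigma$, the coefficient of $\partial_{J\cup\{j\}}^{\sigma}\phi$; no linear independence is needed, as equal coefficients give literally equal combinations for every $\phi$.

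It then remains to verify the resulting coefficient identity: for each $\sigma\vdash g$ with multiplicities $m_{h}$ and length $l=l(\sigma)$,
\[\bigl[\sigma=(g)\bigr]+\sum_{\mu:\,m_{\mu}\ge1}\frac{(g-1)!}{\mu!}(-1)^{g-\mu}c_{\sigma\setminus\mu}^{(g-\mu)}=-(-1)^{g}(g-1)!\,c_{\sigma},\]
where $\sigma\setminus\mu$ deletes one part $\mu$ and the bracket is $1$ when $\sigma$ is the one-part partition and $0$ otherwise. The case $\sigma=(g)$ is immediate (the sum is empty and the two constants agree). For $l\ge2$ I would substitute the explicit $c_{\sigma\setminus\mu}^{(g-\mu)}$ from Theorem \ref{expordp}: the factors $\mu!$ cancel, the signs collapse to $(-1)^{l+1}$, and each part value contributes a factor $m_{\mu}(g-\mu)$, so the whole sum hinges on the elementary identity $\sum_{\mu}m_{\mu}(g-\mu)=g\sum_{\mu}m_{\mu}-\sum_{\mu}\mu m_{\mu}=gl-g=g(l-1)$, after which $(g-1)!\,g=g!$ and $(l-2)!(l-1)=(l-1)!$ reassemble precisely the right-hand side. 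The main obstacle I anticipate is bookkeeping rather than depth: one must keep straight that the pure term $\partial_{j}^{g}\phi$ enters with coefficient $1$ and not the value $1/g$ that the uniform pattern would assign to a hypothetical $\mu=g$ term, correctly match each product $\partial_{j}^{\mu}\partial_{J}^{\rho}\phi$ to its partition $\sigma=\rho\cup(\mu)$ (so that exactly one $\rho=\sigma\setminus\mu$ contributes per part value $\mu$ of $\sigma$), and keep separate the regime $g\le p$, where $\partial_{J}^{g}\phi$ already equals $\partial_{J\cup\{j\}}^{g}\phi$, from $g=p+1$, where only the latter exists. Throughout, Lemma \ref{dersym} is exactly what guarantees that every $\partial_{J\cup\{j\}}^{\sigma}\phi$ is a well-defined quantity on the coincident diagonal, legitimizing the termwise passage to the limit.
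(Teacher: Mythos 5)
Your proposal is correct and follows essentially the same route as the paper's proof: you expand each $\partial_{J}^{g-\mu}\phi$ by its defining formula, use Lemma \ref{dersym} to identify the limit of each simple mixed derivative $\partial_{j}^{\mu}\partial_{J}^{\rho}\phi$ with $\partial_{J\cup\{j\}}^{\rho\cup(\mu)}\phi$, and match coefficients of $\partial_{J\cup\{j\}}^{\sigma}\phi$ partition by partition via the identity $\sum_{\mu}m_{\mu}(g-\mu)=g\big(l(\sigma)-1\big)$, exactly as the paper does (including proving the second assertion first for all $g \leq p+1$ and deducing the vanishing statement from it). Your coefficient bookkeeping, the treatment of the length-one partition, and the reduction of the $\mu=0$ term via $\partial_{J}^{g}\phi=\partial_{J\cup\{j\}}^{g}\phi$ all check out.
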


\begin{proof}
For each $\mu\geq1$ we write $\partial_{J}^{g-\mu}\phi$, via the definition in Theorem \ref{expordp}, as the sum $\sum_{\tau \vdash g-\mu}(-1)^{g-\mu-l(\tau)}(g-\mu)\cdot\big(l(\sigma)-1\big)!\partial_{J}^{\tau}\phi\big/\prod_{h=1}^{g-\mu}h!^{\tilde{m}_{h}}\tilde{m}_{h}!$, where for $\tau$ we use $\tilde{m}_{h}$ for the multiplicities. When we differentiate it $\mu$ times with respect to $j$, at the limit $x_{j} \to y$ the derivative $\partial_{j}^{\mu}\partial_{J}^{\tau}$ becomes, by definition, $\partial_{J\cup\{j\}}^{\sigma}$ where $\sigma$ is the partition of $g$ such that $\sigma-\mu\varepsilon_{\mu}=\tau$ in the notation used in, e.g., Corollary \ref{difepart}. We thus take a partition $\sigma \vdash g$, and see with which multiplicity the expression $\partial_{J\cup\{j\}}^{\sigma}\phi$ shows up in the limit of $\partial_{j}^{g}\phi+\sum_{\mu=1}^{g-1}\frac{(g-1)!}{\mu!}(-1)^{g-\mu}\partial_{j}^{\mu}\partial_{J}^{g-\mu}\phi$ as $x_{j} \to y$. The first term only produces the derivative corresponding to the partition of $g$ as $g$ (with length 1), and as for every $1\leq\mu \geq g-1$ both $j$ and indices different from $j$ show up in $\partial_{j}^{\mu}\partial_{J}^{\tau}\phi$ for any $\tau \vdash g-\mu$, the sum over $\mu$ only contributes to partitions $\sigma \vdash g$ with $l(\sigma)\geq2$.

Now, given such $\sigma$ we have $l(\sigma^{t}) \leq g-1$, and we get a contribution exactly from those $1\leq\mu \leq g-1$ and $\tau \vdash g-\mu$ such that $\tau=\sigma-\mu\varepsilon_{\mu}$, namely from those indices $\mu$ that show up in $\sigma$, and then $\tau$ is determined. When $m_{h}$ (resp. $\tilde{m}_{h}$) is the multiplicity with which $h$ shows up in $\sigma$ (resp. $\tau$), we get that $\tilde{m}_{h}$ can be expressed succinctly as $m_{\mu}-\delta_{h,\mu}$, and $l(\tau)=l(\sigma)-1$. Moreover, we can write $1\big/\prod_{h=1}^{g-\mu}h!^{\tilde{m}_{h}}\tilde{m}_{h}!$, expressing the denominator of the numerical coefficient multiplying $\partial_{J}^{\tau}\phi$, as $\mu!m_{\mu}\big/\prod_{h=1}^{g-1}h!^{m_{h}}m_{h}!$ using the multiplicities for $\sigma$ (and we can take the product over $h$ to go up to $g$, since $m_{g}=0$ when $\sigma \vdash g$ and $l(\sigma)\geq2$). Altogether, since the part $(-1)^{g-\mu}$ of the sign with which $\partial_{J}^{\tau}\phi$ appears in $\partial_{J}^{g-\mu}\phi$ cancels with the sign multiplier of the $\mu$th summand, and the $\mu!$ in the latter expression cancels with the denominator of $\frac{(g-1)!}{\mu!}$, the total coefficient with which $\partial_{J\cup\{j\}}^{\sigma}\phi$ shows up in our expression is $(g-1)!$ times \[-\frac{(-1)^{l(\sigma)}\big(l(\sigma)-2\big)!}{\prod_{h=1}^{g-\mu}h!^{m_{h}}m_{h}!}\bigg[\sum_{\mu=1}^{g-1}(g-\mu)m_{\mu}\bigg]=-(-1)^{g}\frac{(-1)^{g-l(\sigma)}g\big(l(\sigma)-1\big)!}{\prod_{h=1}^{g-\mu}h!^{m_{h}}m_{h}!}\] (this is so, since the vanishing of the multiplier $m_{\mu}$ when $\mu$ does not show up in $\sigma$ allows us to consider the sum over all $1\leq\mu \leq g-1$, and by adding $\mu=g$ in the same trivial manner we recall that $\sum_{\mu=1}^{g}m_{\mu}=l(\sigma)$ and $\sum_{\mu=1}^{g}m_{\mu}\mu=g$ when $\sigma \vdash g$). Note that in the remaining partition of length 1 the latter expression (including the multiplier $(g-1)!$) reduces to the coefficient 1 from the previous paragraph. As this is $-(-1)^{g}(g-1)!$ times the coefficient multiplying $\partial_{J\cup\{j\}}^{\sigma}\phi$ in $\partial_{J\cup\{j\}}^{g}\phi$ for every $\sigma \vdash g$, now without the restriction on $l(\sigma)$ (also when $g=p+1$), we obtain the second assertion, from which the first one follows since when $g \leq p$ we can represent the summand $(g-1)!(-1)^{g}\partial_{J}^{g}\phi$ cancels with $-(g-1)!(-1)^{g}\partial_{J\cup\{j\}}^{g}\phi$ by Lemma \ref{dersym}. This proves the lemma.
\end{proof}

The combination of these results that we shall need is the following one.
\begin{prop}
At a point like in Lemma \ref{relsders}, with $\phi$ differentiable at least $p$ times, take an index set $K$ that is disjoint from $J\cup\{j\}$, and let $\{x_{k}\}_{k \in K}$ be as in Lemma \ref{derwithdenom}. Then the function \[\frac{\phi_{j}}{\prod_{k \in K}(x_{k}-x_{j})}+\sum_{c=0}^{p-1}(x_{j}-y)^{c}\sum_{(a,\vec{b}) \in A_{K,c}}\frac{(-1)^{a}\partial_{J}^{a}\phi}{\prod_{k \in K}(x_{k}-y)^{b_{k}}}\] of $x_{j}$, and all of its derivatives up to order $p-1$, vanish at the limit $x_{j} \to y$. The value of its $p$th derivative is $-p!\sum_{(a,\vec{b}) \in A_{K,p}}(-1)^{a}\partial_{J}^{a}\phi\big/\prod_{k \in K}(x_{k}-y)^{b_{k}}$. \label{LHopital}
\end{prop}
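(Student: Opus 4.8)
The plan is to regard the displayed expression as a function $F$ of the single variable $x_{j}$ (with all variables in $J$ frozen at $y$, and the $x_{k}$, $k\in K$, held fixed and distinct from $y$), to compute $\frac{d^{\nu}F}{dx_{j}^{\nu}}$ for $0\leq\nu\leq p$, and to evaluate $\lim_{x_{j}\to y}$ of each such derivative. Write $F=T_{1}+T_{2}$, where $T_{1}=\phi_{j}\big/\prod_{k\in K}(x_{k}-x_{j})$ is the first summand and $T_{2}=\sum_{c=0}^{p-1}(x_{j}-y)^{c}G_{c}$ with $G_{c}=\sum_{(a,\vec{b})\in A_{K,c}}(-1)^{a}\partial_{J}^{a}\phi\big/\prod_{k\in K}(x_{k}-y)^{b_{k}}$ is the second. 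Differentiating $T_{1}$ is exactly Lemma \ref{derwithdenom} (with $\psi=\phi$ viewed as a function of $x_{j}$, so $\psi^{(a)}=\partial_{j}^{a}\phi$), which yields $\frac{d^{\nu}T_{1}}{dx_{j}^{\nu}}=\sum_{(a,\vec{b})\in A_{K,\nu}}\frac{\nu!}{(a-1)!}\partial_{j}^{a}\phi\big/\prod_{k\in K}(x_{k}-x_{j})^{b_{k}}$.

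For $T_{2}$ I would apply the Leibniz rule to each product $(x_{j}-y)^{c}G_{c}$. Since $\frac{d^{\nu-\mu}}{dx_{j}^{\nu-\mu}}(x_{j}-y)^{c}$ tends to $0$ as $x_{j}\to y$ unless $c=\nu-\mu$ (in which case it tends to $c!$), only the terms with $c=\nu-\mu$ survive the limit; the remaining $\mu$ derivatives fall on $G_{c}$, producing mixed derivatives $\partial_{j}^{\mu}\partial_{J}^{a}\phi$, whose limits are governed by the definition of $\partial_{J\cup\{j\}}^{\sigma}$ and Lemma \ref{dersym}. The denominators of $T_{1}$ likewise tend to $\prod_{k\in K}(x_{k}-y)^{b_{k}}$, so both pieces end up indexed by the same exponent vector $\vec{b}$. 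The decisive move is then to collect, for each fixed $\vec{b}$ with $\sum_{k}b_{k}=B$, all contributions to the coefficient of $1\big/\prod_{k\in K}(x_{k}-y)^{b_{k}}$. Setting $g:=|K|+\nu+1-B$ (so that $(g,\vec{b})\in A_{K,\nu}$ supplies the $T_{1}$ term, while the surviving $T_{2}$ terms carry $a=g-\mu$ with $\mu=\nu-c$), this collected coefficient becomes
\[
\frac{\nu!}{(g-1)!}\lim_{x_{j}\to y}\partial_{j}^{g}\phi+\sum_{\mu}\frac{\nu!}{\mu!}(-1)^{g-\mu}\lim_{x_{j}\to y}\partial_{j}^{\mu}\partial_{J}^{g-\mu}\phi,
\]
which is precisely $\frac{\nu!}{(g-1)!}$ times the bracket whose limit is shown to vanish in Lemma \ref{relsders}.

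For $\nu<p$ the range of $\mu$ coming from the truncated sum over $c$ is exactly $\{0,1,\ldots,g-1\}$ (using $B\geq|K|$, hence $g\leq\nu+1$, to see that the upper end is $g-1$, and to check that $T_{1}$ and $T_{2}$ involve the very same set of vectors $\vec{b}$), so the collected coefficient is the full Lemma \ref{relsders} bracket and vanishes; this gives $\lim_{x_{j}\to y}\frac{d^{\nu}F}{dx_{j}^{\nu}}=0$ for every $\nu\leq p-1$. For $\nu=p$ the only change is that the term $c=p$, equivalently $\mu=0$, is absent because the sum defining $T_{2}$ stops at $c=p-1$; thus the collected coefficient equals the (vanishing) Lemma \ref{relsders} bracket minus the missing $\mu=0$ term $\nu!(-1)^{g}\partial_{J}^{g}\phi$, leaving $-p!(-1)^{g}\partial_{J}^{g}\phi\big/\prod_{k\in K}(x_{k}-y)^{b_{k}}$. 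Summing over $\vec{b}$, and noting that $g$ is exactly the index $a$ with $(a,\vec{b})\in A_{K,p}$, reproduces the asserted value $-p!\sum_{(a,\vec{b})\in A_{K,p}}(-1)^{a}\partial_{J}^{a}\phi\big/\prod_{k\in K}(x_{k}-y)^{b_{k}}$.

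The main obstacle is the bookkeeping in this collection step: one must reconcile the two different index sets $A_{K,\nu}$ (from $T_{1}$) and $A_{K,c}$ (from $T_{2}$) through the common parameter $\vec{b}$, verify that the $\mu$-range produced by the Leibniz expansion matches the summation range $0\leq\mu\leq g-1$ of Lemma \ref{relsders} on the nose, and track the single boundary term that distinguishes $\nu<p$ from $\nu=p$, namely that truncating the $c$-sum at $p-1$ deletes precisely the $\mu=0$ contribution once $\nu$ reaches $p$. Everything else (the limits of the pure and mixed derivatives, and the survival of only the $c=\nu-\mu$ terms) is routine once this matching is in place.
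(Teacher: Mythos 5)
Your overall route coincides with the paper's: the same splitting into $T_{1}+T_{2}$, Lemma \ref{derwithdenom} applied to $T_{1}$, the generalized Leibniz rule applied to $T_{2}$ with only the terms $c=\nu-\mu$ surviving the limit, and the collection of contributions by the exponent vector $\vec{b}$. Your treatment of the orders $\nu\leq p-1$ is complete and correct: there $g=|K|+\nu+1-B\leq\nu+1\leq p$, so the collected coefficient is $\frac{\nu!}{(g-1)!}$ times the bracket of the first assertion of Lemma \ref{relsders}, which vanishes in the limit.

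The gap is in the case $\nu=p$, and concretely in the element of $A_{K,p}$ with $b_{k}=1$ for every $k\in K$, whose complementary index is $a=g=p+1$. This element always exists (when $K=\emptyset$ it is the \emph{only} element), and it is precisely the term that produces the top-order derivative driving the induction in Theorem \ref{expordp}. Your argument there is ``vanishing full bracket minus the missing $\mu=0$ term,'' which needs two things: that the full bracket $\partial_{j}^{g}\phi+\sum_{\mu=0}^{g-1}\frac{(g-1)!}{\mu!}(-1)^{g-\mu}\partial_{j}^{\mu}\partial_{J}^{g-\mu}\phi$ be well defined, and that its limit vanish. Both fail for $g=p+1$: the $\mu=0$ term involves $\partial_{J}^{p+1}\phi$, which does not exist since $|J|=p$ (the partition $(1,\ldots,1)$ of $p+1$ has length $p+1>|J|$, so the defining formula of Theorem \ref{expordp} cannot be applied to it), and the first assertion of Lemma \ref{relsders} is stated and proved only for $1\leq g\leq p$ --- in the paper's proof of that lemma the first assertion is \emph{deduced} from the second by cancelling the $\mu=0$ term via Lemma \ref{dersym}, which is possible only when $g\leq p$. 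This boundary case is exactly why Lemma \ref{relsders} carries its separate second assertion, valid ``also for $g=p+1$'' when the limit is read as a multiple of $\partial_{J\cup\{j\}}^{g}\phi$. The paper's proof therefore applies that second assertion directly to the truncated bracket (sum over $1\leq\mu\leq g-1$), which is well defined for all $g\leq p+1$, obtaining the limit $-(-1)^{g}(g-1)!\partial_{J\cup\{j\}}^{g}\phi$; this also makes precise what $\partial_{J}^{p+1}\phi$ means in the stated value, namely $\partial_{J\cup\{j\}}^{p+1}\phi$ at the limit point. Replacing your subtraction argument by this direct appeal to the second assertion, uniformly in $g$, closes the gap with no other changes to your proof.
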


\begin{proof}
Take some $\nu\geq0$, and consider the $\nu$th derivative of this function of $x_{j}$. The derivative of the first term is given in Lemma \ref{derwithdenom}, where $\psi$ is the restriction of $\phi$ to the variable $x_{j}$ with the other ones fixed with our conditions, so that each $\psi^{(a)}(x_{j})$ is $\partial_{j}^{a}\phi$. In the second term we only differentiate the powers of $x_{j}-y$ and the numerators $\partial_{J}^{a}\phi$, since the denominators are independent of $x_{j}$. The Generalized Leibniz' Rule shows that the $\nu$th derivative of $(x_{j}-y)^{c}\partial_{J}^{a}\phi$ is $\sum_{\mu}\binom{\nu}{\mu}\frac{c!}{(c-\nu+\mu)!}(x_{j}-y)^{c-\nu+\mu}\partial_{j}^{\mu}\partial_{J}^{a}\phi$, with $\max\{0,\nu-c\}\leq\mu\leq\nu$. Interchanging the sum over $c$ and $\mu$, and replacing $c$ by $c+\nu-\mu$, we find that the $\nu$th derivative of the second term is \[\sum_{\mu=\nu+1-p}^{\nu}\binom{\nu}{\mu}\sum_{c=0}^{p-1-\nu+\mu}(x_{j}-y)^{c}\frac{(c+\nu-\mu)!}{c!}\sum_{(a,\vec{b}) \in A_{K,c+\nu-\mu}}\frac{(-1)^{a}\partial_{j}^{\mu}\partial_{J}^{a}\phi}{\prod_{k \in K}(x_{k}-y)^{b_{k}}},\] where the binomial coefficient restricts $\mu$ to $\max\{0,\nu+1-p\}\leq\mu\leq\nu$. As the terms with $c>0$ clearly vanish as $x_{j} \to y$, it remains to consider the limit of \[\sum_{(a,\vec{b}) \in A_{K,\nu}}\frac{\nu!\partial_{j}^{a}\phi}{(a-1)!\prod_{k \in K}(x_{k}-x_{j})^{b_{k}}}+\sum_{\mu=\nu+1-p}^{\nu}\frac{\nu!}{\mu!}\sum_{(a,\vec{b}) \in A_{K,\nu-\mu}}\frac{(-1)^{a}\partial_{j}^{\mu}\partial_{J}^{a}\phi}{\prod_{k \in K}(x_{k}-y)^{b_{k}}}\] (again with the sum over $\mu$ effectively being over $\max\{0,\nu+1-p\}\leq\mu\leq\nu$), where we can substitute $x_{j}=y$ in the denominators of the summands in the first term.

Take some values of $\{b_{k}\}_{k \in K}$ that show up in an element of $A_{K,\nu}$. The value of $a$ in the corresponding summand in the first term is $|K|+\nu+1-\sum_{k \in K}b_{k}$, an expression that we denote by $g$, and satisfies $g\geq1$ since $(g,\vec{b}) \in A_{K,\nu}$ by assumption. The values of $\mu$ for which this element shows up in $A_{K,\nu-\mu}$ are those for which the required value of $a$, which is now $g-\mu$, is positive, namely $\mu \leq g-1$. Assuming that $\nu \leq p-1$, so that the effective lower bound on $\mu$ is indeed 0, these terms in our expression are $\nu!\big/(g-1)!\prod_{k \in K}(x_{k}-y)^{b_{k}}$ times the combination $\partial_{j}^{g}\phi+\sum_{\mu=0}^{g-1}\frac{(g-1)!}{\mu!}(-1)^{g-\mu}\partial_{j}^{\mu}\partial_{J}^{g-\mu}\phi$, and $g\leq\nu+1 \leq p$. As this combination vanishes as $x_{j} \to y$ by the first assertion in Lemma \ref{relsders}, this establishes the first assertion. If we now take $\nu=p$, then the combination that we get is the same, but with the sum over $\mu$ starting from 1, and the bound $g \leq p+1$. We apply the second assertion of Lemma \ref{relsders} for the limit as $x_{j} \to y$, and merge with the external multiplier (with $\nu=p$) to get $(-1)^{g+1}p!\partial_{J}^{g}\phi\big/\prod_{k \in K}(x_{k}-y)^{b_{k}}$. As $g$ was seen to be the value of $a$ complementing the $b_{k}$'s to an element of $A_{K,\nu}=A_{K,p}$, the second assertion follows as well. This proves the proposition.
\end{proof}

\begin{proof}[Proof of Theorem \ref{expordp}]
We prove the desired formula by induction on $p$. The case $p=1$ is just the definition of $D_{I}\phi$ where all the points are distinct, since $A_{I \setminus J,0}$ has a single simple element and $\partial_{J}^{1}\phi=\phi_{i}$ where $J$ is the singleton $\{i\}$. We shall assume that $d>p$, and, by induction, that the formula for $D_{I}\phi$ is valid when $|J|=p$. We take some index $j \in I \setminus J$, view the values of the variable $x_{k}$ for $k \in I\setminus(J\cup\{j\})$ as fixed, and allow the variable $x_{j}$ to tend to the common value $y$ of $x_{i}$, $i \in J$. In the terms involving $\phi_{k}$ for $k \in I\setminus(J\cup\{j\})$, this limit is obtained by simply replacing the multiplier $x_{j}-x_{k}$ by another power of $y-x_{k}$, thus yielding the desired sum over such $k$ in $D_{I}\phi$ for $p+1$.

For analyzing the remaining terms, set $K:=I\setminus(J\cup\{j\})$, so that $I \setminus J$ is $K\cup\{j\}$, and note that the value of $b_{j}$ in an element of $A_{I \setminus J,p-1}$ lies between 1 and $\nu$, and that when we fix that value, the remaining entries give an element of $A_{K,p-b_{j}}$. Having the denominator $(y-x_{j})^{p}$ in the term involving $\phi_{j}$, we replace $b_{j}$ by $c=p-b_{j}$, and the equality that we have to show that the limit of
\[\frac{1}{(x_{j}-y)^{p}}\Bigg[\frac{(-1)^{p}\phi_{j}}{\prod_{k \in K}(x_{k}-x_{j})}+\sum_{c=0}^{p-1}(x_{j}-y)^{c}\sum_{(a,\vec{b}) \in A_{K,c}}\frac{(-1)^{p-a}\partial_{J}^{a}\phi}{\prod_{k \in K}(x_{k}-y)^{b_{k}}}\Bigg]\] as $x_{j} \to y$ exists and equals $\sum_{(a,\vec{b}) \in A_{K,p}}(-1)^{p+1-a}\partial_{J\cup\{j\}}^{a}\phi\big/\prod_{k \in K}(x_{k}-y)^{b_{k}}$. We claim that we can do it by applying L'H\^{o}pital's Rule $p$ successive times.

Indeed, the denominator is $(x_{j}-y)^{p}$, which vanishes along with its first $p-1$ derivatives as $x_{j} \to y$, and the numerator is $(-1)^{p}$ times the function from Corollary \ref{LHopital}, which also has the same vanishing properties. Therefore the limit is the quotient of the $p$th derivative of the numerator divided by that of the denominator, which is $p!$. As $\frac{(-1)^{p}}{p!}$ times the expression for the $p$th derivative from Proposition \ref{LHopital} is the asserted value, the result required for the induction step follows. This proves the theorem.
\end{proof}
It is clear from the proof of Theorem \ref{expordp} that the only property of the explicit expression for $\partial_{J}^{p}\phi$ is the relation from Lemma \ref{relsders}. As this lemma expresses $\partial_{J}^{g}\phi$, and with it $\partial_{J\cup\{j\}}^{g}\phi$, in terms of the derivatives of lower order (and their derivatives with respect to $x_{j}$), this relation can be used for constructing these expressions inductively.

\smallskip

Theorem \ref{expordp} considers one operator $D_{I}$, with $|I|=d$, at a point where the the values of the variables associated with one set $J$, of size $p$, coincide, while all the other values are distinct. This is, indeed, the most generic assumptions under which an expression like $\partial_{J}^{p}\phi$, for this value of $p$, shows up. We can view this situation as a partition of $I$ into $d-p+1$ sets, one of which is $J=J_{1}$ and the other ones are singletons $J_{\alpha}$, $2\leq\alpha \leq d-p+1$, and denote by $y_{\alpha}$ the value of $x_{i}$ for $i \in J_{\alpha}$ (this is $y$ when $\alpha=1$ and $x_{k}$ if $\alpha>1$ and $J_{\alpha}$ is the singleton $\{k\}$). All the terms in the expression for $D_{I}\phi$ in Theorem \ref{expordp} involve (up to a multiplying numerical coefficient which is a sign) a derivative $\partial_{J_{\alpha}}^{g}\phi$ with $1 \leq g\leq|J_{\alpha}|$ (this means $g=1$ for $\alpha>1$), over a denominator which is a product over $\beta\neq\alpha$ of $(y_{\beta}-y_{\alpha})^{c_{\beta}}$, where $c_{\beta}\geq|J_{\beta}|$ and $g+\sum_{\beta\neq\alpha}c_{\beta}=d$.

At a general point in $\mathbb{R}^{N}$ we may well have several sets of points coinciding. For generalizing the interpretation of Theorem \ref{expordp} from the previous paragraph, we consider a collection $\vec{J}$ of non-empty disjoint finite sets $\{J_{\alpha}\}_{\alpha=1}^{s}$ and an index $1\leq\alpha \leq s$, and define the set $B_{\vec{J},\alpha}$ to be the set of tuples of positive integers the form $(a,\vec{c})$ in which $\vec{c}=\{c_{\beta}\}_{\beta\neq\alpha}$ is such that $c_{\beta}\geq|J_{\beta}|$ for every $\beta\neq\alpha$, and $a\in\mathbb{N}$ satisfies $a+\sum_{\beta\neq\alpha}c_{\beta}=\sum_{\alpha=1}^{s}|J_{\alpha}|$. Note that when $J_{\beta}$ is a singleton for every $\beta\neq\alpha$ and $I=\bigcup_{\alpha=1}^{s}J_{\alpha}$, the set $B_{\vec{J},\alpha}$ is the same as $A_{I \setminus J_{\alpha},|J_{\alpha}|-1}$ (as it should be, by comparing Proposition \ref{DIgenpt} below to Theorem \ref{expordp}), so that it is indeed a more general definition. Using these sets, we obtain the following formula for $D_{I}\phi$ at any point.
\begin{prop}
Assume that the index set $I$ is the disjoint union of non-empty sets $\{J_{\alpha}\}_{\alpha=1}^{s}$, and consider a point in $\mathbb{R}^{N}$ at which $x_{i}$ takes the same value $y_{\alpha}$ for every $i \in J_{\alpha}$, but the values $\{y_{\alpha}\}_{\alpha=1}^{s}$ are distinct. At such a point we have \[D_{I}\phi=\sum_{\alpha=1}^{s}\sum_{(a,\vec{c}) \in B_{\vec{J},\alpha}}\Bigg[\prod_{\beta\neq\alpha}\binom{c_{\beta}-1}{|J_{\beta}|-1}\Bigg]\frac{(-1)^{|J_{\alpha}|-a}\partial_{J_{\alpha}}^{a}\phi}{\prod_{\beta\neq\alpha}(y_{\beta}-y_{\alpha})^{c_{\beta}}}.\] \label{DIgenpt}
\end{prop}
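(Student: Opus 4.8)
The plan is to prove the formula by a limiting argument that mirrors the proof of Theorem \ref{expordp}, organized as an induction on the number $m$ of non-singleton sets among the $J_{\alpha}$. When $m=0$ every $J_{\alpha}$ is a singleton, the set $B_{\vec{J},\alpha}$ forces $a=1$ and all $c_{\beta}=1$ (so every binomial coefficient is $\binom{0}{0}=1$), and the asserted formula is exactly the defining expression $\sum_{i \in I}\phi_{i}\big/\prod_{j \neq i}(x_{j}-x_{i})$ for $D_{I}\phi$ at a point with distinct values. The case $m=1$ is precisely Theorem \ref{expordp}, via the identification $B_{\vec{J},\alpha}=A_{I \setminus J_{\alpha},|J_{\alpha}|-1}$ recorded before the statement. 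These two cases serve as the base of the induction.

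For the inductive step I would single out one non-singleton set, say $J_{\alpha_{0}}$ with $|J_{\alpha_{0}}|=p\geq2$, and compare the target configuration with the nearby one in which the $p$ variables of $J_{\alpha_{0}}$ are pulled apart to distinct values close to $y_{\alpha_{0}}$, all other groups being left at their coincident values $y_{\beta}$. That auxiliary configuration has only $m-1$ non-singleton groups, so by the induction hypothesis $D_{I}\phi$ is given there by the stated formula; the target value is recovered by letting the $p$ variables of $J_{\alpha_{0}}$ coalesce back to $y_{\alpha_{0}}$ one variable at a time, exactly as in the passage from $|J|=p$ to $|J|=p+1$ inside the proof of Theorem \ref{expordp}. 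In this limit the terms split into two kinds according to which group carries the surviving numerator. In a term whose numerator is $\partial_{J_{\beta}}^{a}\phi$ with $\beta \neq \alpha_{0}$, the coalescing variables of $J_{\alpha_{0}}$ appear only in the denominator, the limit is smooth (no factor vanishes, since $y_{\alpha_{0}} \neq y_{\beta}$), and the per-index factors $(x_{k}-y_{\beta})^{b_{k}}$ with $k \in J_{\alpha_{0}}$ merge into one factor $(y_{\alpha_{0}}-y_{\beta})^{c_{\alpha_{0}}}$ with $c_{\alpha_{0}}=\sum_{k \in J_{\alpha_{0}}}b_{k}$; summing over all compositions of $c_{\alpha_{0}}$ into $|J_{\alpha_{0}}|$ positive parts produces exactly the count $\binom{c_{\alpha_{0}}-1}{|J_{\alpha_{0}}|-1}$ demanded by the proposition. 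In a term whose numerator belongs to $J_{\alpha_{0}}$ itself, the coalescence is the genuinely singular one, handled by iterating L'H\^{o}pital's Rule through Lemma \ref{derwithdenom}, Lemma \ref{relsders} and Proposition \ref{LHopital}, and it is this process that upgrades the first-order derivatives $\phi_{k}$, $k \in J_{\alpha_{0}}$, into the combinations $\partial_{J_{\alpha_{0}}}^{a}\phi$ with the signs $(-1)^{|J_{\alpha_{0}}|-a}$.

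The step I expect to be the main obstacle is this singular coalescence when other groups have already been merged: there the variable $x_{j}$ being sent to $y_{\alpha_{0}}$ sits in denominators carrying factors $(y_{\beta}-x_{j})^{c_{\beta}}$ of exponent $c_{\beta}\geq1$ coming from the already-coalesced spectator groups $J_{\beta}$, whereas Lemma \ref{derwithdenom} and Proposition \ref{LHopital} are stated for simple factors. I would resolve this in one of two equivalent ways: either keep every spectator group in expanded, per-index form until the very end of the whole computation—legitimate because the proofs of Lemma \ref{derwithdenom} and Proposition \ref{LHopital} differentiate only in the active variable and never use that the spectator values are distinct—merging all per-index denominators into group denominators only in the final substitution; or first prove a version of Lemma \ref{derwithdenom} for a denominator $\prod_{\beta}(y_{\beta}-x_{j})^{n_{\beta}}$ with arbitrary multiplicities, whose repeated differentiation reproduces the weights $\binom{c_{\beta}-1}{n_{\beta}-1}$ directly. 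Once the two contributions are assembled I would check that the exponent bookkeeping matches $B_{\vec{J},\alpha}$, namely $a+\sum_{\beta \neq \alpha}c_{\beta}=|I|$ together with $c_{\beta}\geq|J_{\beta}|$, and that the signs and the reduction to Theorem \ref{expordp} are consistent, which completes the induction.
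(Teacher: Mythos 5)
Your proposal is correct in outline --- the base cases, the split of the inherited terms into smooth ones and singular ones, and the composition count $\binom{c_{\alpha_{0}}-1}{|J_{\alpha_{0}}|-1}$ all match what is needed --- but it is organized quite differently from the paper, which sidesteps entirely the obstacle your last paragraph is devoted to. The paper does not induct on the number of non-singleton groups; it decomposes $D_{I}\phi$ into its $\alpha$-parts $\sum_{i \in J_{\alpha}}\partial_{i}\phi\big/\prod_{j \neq i}(x_{j}-x_{i})$ and, for each $\alpha$ separately, takes the iterated limit in the order opposite to yours: first the variables of $J_{\alpha}$ coalesce while \emph{all} other variables are still pairwise distinct, so that Theorem \ref{expordp} applies verbatim as a black box and no merged group ever serves as a spectator during a singular coalescence; only afterwards are the spectator groups merged, which for the $\alpha$-part is a purely smooth substitution into the denominators $\prod_{k \in I \setminus J_{\alpha}}(x_{k}-y_{\alpha})^{b_{k}}$, and the binomial coefficient appears as the fiber count of the map $(a,\vec{b})\mapsto(a,\vec{c})$, $c_{\beta}=\sum_{k \in J_{\beta}}b_{k}$ --- your stars-and-bars count in the same place. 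What the paper's ordering buys is that Lemma \ref{derwithdenom}, Lemma \ref{relsders} and Proposition \ref{LHopital} never need to be reopened; what your sequential ordering buys is a single consistent coalescence path for all terms at once (the paper uses a different limit order for each $\alpha$-part, so it implicitly needs each part to have a well-defined limit at the target point --- your scheme is no worse off in this respect, since both arguments compute limits along chosen paths). Finally, your resolution of the obstacle is sound and in fact easier than you anticipate: at the auxiliary configuration the constraints defining $B_{\vec{J},\{k\}}$ for a singleton group $\{k\}$ ($a\geq1$, $c_{G}\geq|J_{G}|$, $a+\sum_{G}c_{G}=|I|$) force $a=1$ and $c_{G}=|J_{G}|$ exactly, so the offending factors $(y_{\beta}-x_{j})^{c_{\beta}}$ in the singular terms carry the minimal exponents $|J_{\beta}|$ and are literally products of simple per-index factors with equal values; your first fix --- re-running the L'H\^{o}pital machinery with spectators allowed to coincide, which is legitimate because those proofs only ever differentiate in the active variable --- therefore suffices, and no weighted generalization of Lemma \ref{derwithdenom} is required.
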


\begin{proof}
As in the proof of Theorem \ref{expordp}, we can evaluate our $D_{I}\phi$ by considering the ordinary $D_{I}\phi$ from Lemma \ref{DIexI} at points that are close to the desired one and in which all the variables take different values, and then take the limit at our point in a convenient way. For this we decompose $D_{I}\phi$ from Lemma \ref{DIexI} as the sum over $1\leq\alpha \leq s$ of $\sum_{i \in J_{\alpha}}\partial_{i}\phi\big/\prod_{i \neq j \in I}(x_{j}-x_{i})$, and in the part associated with $\alpha$ we first take the limit where all the variables $x_{i}$ with $i \in J_{\alpha}$ tend to $y_{\alpha}$, and then consider the limit in the other variables. Theorem \ref{expordp} shows that the $\alpha$th part becomes $\sum_{(a,\vec{b}) \in A_{I \setminus J_{\alpha},|J_{\alpha}|-1}}(-1)^{|J_{\alpha}|-a}\partial_{J}^{a}\phi\big/\prod_{k \in I \setminus J_{\alpha}}(x_{k}-y_{\alpha})^{b_{k}}$ at the first limit, and at the second limit, each denominator $\prod_{k \in I \setminus J_{\alpha}}(x_{k}-y_{\alpha})^{b_{k}}$ becomes $\prod_{\beta\neq\alpha}(y_{\beta}-y_{\alpha})^{c_{\beta}}$, with $c_{\beta}:=\sum_{k \in J_{\alpha}}b_{k}$.

Now, from the conditions on elements of $A_{I \setminus J_{\alpha},|J_{\alpha}|-1}$ we clearly have the equality $a+\sum_{\beta\neq\alpha}c_{\beta}=a+\sum_{k \in I \setminus J}=d=|I|$, and as $b_{k}\geq1$ for each $k \in I \setminus J_{\alpha}$ we get $c_{\beta}\geq|J_{\beta}|$ for every $\beta\neq\alpha$. The limit of the $\alpha$th part of $D_{I}\phi$ is thus indeed the sum over $(a,\vec{c}) \in B_{\vec{J},\alpha}$ of the corresponding function (with the asserted sign), times a coefficient counting how many elements $(a,\vec{b}) \in A_{I \setminus J_{\alpha},|J_{\alpha}|-1}$ that become our element $(a,\vec{c}) \in B_{\vec{J},\alpha}$ under the map taking such an element of $A_{I \setminus J_{\alpha},|J_{\alpha}|-1}$ to that of $B_{\vec{J},\alpha}$ having the same $a$ and for which $c_{\beta}$ is $\sum_{k \in J_{\alpha}}b_{k}$. But as the $b_{k}$'s are positive and free up to the sum conditions, we can consider the question for each $\beta$ separately, and by subtracting 1 from each we need the number of options to add $|J_{\beta}|$ ordered non-vanishing integers and get $c_{\beta}-|J_{\beta}|$, which is $\binom{c_{\beta}-1}{|J_{\beta}|-1}$ by a standard exercise in Discrete Mathematics (put $|J_{\beta}|-1$ sticks between $c_{\beta}-|J_{\beta}|$ balls to represent a solution, and count the options to do this). This proves the proposition.
\end{proof}

\smallskip

Using Proposition \ref{DIgenpt}, we can obtain the form of the action of our operators $D_{d}$ from Proposition \ref{Ddoneh} and Theorem \ref{coorddef}, and with them of the derivatives from Corollary \ref{difcoord}, at any point in $\mathbb{R}^{N}$. Take an index $1\leq\alpha \leq M$, and assume that we are given integers $c_{\beta}\geq0$ for all $\beta\neq\alpha$. We then denote by $\Xi_{\alpha,\vec{c}}$ the set of all integers $\{\kappa_{\beta}\}_{\beta\neq\alpha}$ such that $\kappa_{\beta}=0$ when $c_{\beta}=0$ and $1\leq\kappa_{\beta} \leq c_{\beta}$ if $c_{\beta}>0$.
\begin{prop}
Let now the full set $\mathbb{N}_{N}$ of indices be the disjoint union $\bigcup_{\alpha=1}^{M}H_{\alpha}$ of non-empty sets, and take a point where $x_{i}=y_{\alpha}$ for any variable $x_{i}$ associated with $i \in H_{\alpha}$, and that $y_{\alpha} \neq y_{\beta}$ when $\alpha\neq\beta$. Take some $1 \leq d \leq N$, and then we have the equality $D_{d}\phi$ is $d!$ times the sum over $1\leq\alpha \leq M$ of \[D_{d}\phi=d!\sum_{\alpha=1}^{M}\sum_{\sum_{\beta\neq\alpha}c_{\beta}<d}C_{\alpha,\vec{c}}\frac{\partial_{H_{\alpha}}^{d-\sum_{\beta\neq\alpha}c_{\beta}}\phi}{\prod_{\beta\neq\alpha}(y_{\beta}-y_{\alpha})^{c_{\beta}}},\] where the combinatorial multipliers are given by \[C_{\alpha,\vec{c}}:=\sum_{\vec{\kappa}\in\Xi_{\alpha,\vec{c}}}(-1)^{\sum_{\beta\neq\alpha}(c_{\beta}-\kappa_{\beta})}\binom{|H_{\alpha}|}{d-\sum_{\beta\neq\alpha}\kappa_{\beta}}\prod_{\substack{\beta\neq\alpha \\ c_{\beta}>0}}\binom{c_{\beta}-1}{\kappa_{\beta}-1}\binom{|H_{\beta}|}{\kappa_{\beta}}.\] \label{Ddallpt}
\end{prop}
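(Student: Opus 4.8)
The plan is to reduce everything to Proposition \ref{DIgenpt} together with a count of the subsets $I$ according to how they meet the blocks $H_\alpha$. Writing $D_d=d!\sum_{|I|=d}D_I$, I would apply Proposition \ref{DIgenpt} to each $D_I\phi$ at the given point: for a fixed $I$ of size $d$, the non-empty intersections $J_\alpha:=I\cap H_\alpha$ form the collection $\vec J$ to which that proposition applies, and since all variables $x_i$ with $i\in H_\alpha$ equal $y_\alpha$, Lemma \ref{dersym} lets me replace each numerator $\partial_{J_\alpha}^a\phi$ by $\partial_{H_\alpha}^a\phi$, a quantity depending only on the block $H_\alpha$ and the order $a$, not on the particular subset $J_\alpha\subseteq H_\alpha$. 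This independence is the step that makes the subsequent reorganization legitimate, since then all subsets $I$ with the same intersection-size profile contribute identical terms.

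Next I would exchange the order of summation: rather than fixing $I$ first, I fix the block $\alpha$ supplying the numerator derivative together with the exponent vector $\vec c=\{c_\beta\}_{\beta\neq\alpha}$, now ranging over all $\beta\neq\alpha$ with $c_\beta\geq0$. For such a pair the derivative order is forced to $a=d-\sum_{\beta\neq\alpha}c_\beta$ by the defining relation $a+\sum_{\beta\neq\alpha}c_\beta=|I|=d$ of $B_{\vec J,\alpha}$, and the requirement $a\geq1$ reproduces exactly the range $\sum_{\beta\neq\alpha}c_\beta<d$ in the statement. The subsets $I$ contributing this particular term are precisely those with $I\cap H_\beta=\emptyset$ whenever $c_\beta=0$ (otherwise a non-empty $J_\beta$ would force a factor $(y_\beta-y_\alpha)^{c'_\beta}$ with $c'_\beta\geq1$ into the denominator), and with $\kappa_\beta:=|I\cap H_\beta|$ satisfying $1\leq\kappa_\beta\leq c_\beta$ when $c_\beta>0$ (from the constraint $c_\beta\geq|J_\beta|$ in $B_{\vec J,\alpha}$); these are exactly the vectors $\vec\kappa\in\Xi_{\alpha,\vec c}$, with $\kappa_\alpha=d-\sum_{\beta\neq\alpha}\kappa_\beta$ then determined.

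Then I would assemble the coefficient of a fixed term $\partial_{H_\alpha}^a\phi\big/\prod_{\beta\neq\alpha}(y_\beta-y_\alpha)^{c_\beta}$. For a fixed profile $\vec\kappa$ the number of subsets $I$ meeting each $H_\gamma$ in exactly $\kappa_\gamma$ elements is $\binom{|H_\alpha|}{\kappa_\alpha}\prod_{\beta\neq\alpha}\binom{|H_\beta|}{\kappa_\beta}$, and by the independence noted above each such $I$ contributes identically. The coefficient supplied by Proposition \ref{DIgenpt} is $\prod_{\beta\neq\alpha,\,c_\beta>0}\binom{c_\beta-1}{\kappa_\beta-1}$ and the sign is $(-1)^{|J_\alpha|-a}=(-1)^{\kappa_\alpha-a}=(-1)^{\sum_{\beta\neq\alpha}(c_\beta-\kappa_\beta)}$, using $\kappa_\alpha-a=\sum_{\beta\neq\alpha}(c_\beta-\kappa_\beta)$. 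Substituting $\kappa_\alpha=d-\sum_{\beta\neq\alpha}\kappa_\beta$ into the first binomial, and noting that the factors with $c_\beta=0$ (hence $\kappa_\beta=0$) are trivial, the sum over $\vec\kappa\in\Xi_{\alpha,\vec c}$ is precisely $C_{\alpha,\vec c}$; summing over $\alpha$ and $\vec c$ and restoring the factor $d!$ then yields the asserted formula.

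I expect the main obstacle to be purely bookkeeping: matching the convention of Proposition \ref{DIgenpt}, in which only the non-empty blocks index the denominator, with the statement's convention, in which every $\beta\neq\alpha$ carries an exponent $c_\beta\geq0$, and confirming that the degenerate possibilities are handled automatically. In particular I would check that along the contributing terms one always has $a\leq|J_\alpha|\leq|H_\alpha|$, so that each $\partial_{H_\alpha}^a\phi$ is the genuinely well-defined object of Lemma \ref{dersym}, and that any spurious index vector with $\kappa_\gamma>|H_\gamma|$ (equivalently $a>|H_\alpha|$) is killed by a vanishing binomial coefficient, so that no ill-defined derivative ever enters $C_{\alpha,\vec c}$.
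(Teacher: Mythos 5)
Your proposal is correct and follows essentially the same route as the paper's proof: decomposing $D_{d}$ as $d!$ times the sum of the $D_{I}$'s, applying Proposition \ref{DIgenpt} with $J_{\alpha}=I \cap H_{\alpha}$, using the fact that $\partial_{J_{\alpha}}^{a}\phi=\partial_{H_{\alpha}}^{a}\phi$ to reduce the dependence to the cardinality profile $\vec{\kappa}\in\Xi_{\alpha,\vec{c}}$, and then counting the contributing sets $I$ via the binomial coefficients $\binom{|H_{\beta}|}{\kappa_{\beta}}$ and $\binom{|H_{\alpha}|}{d-\sum_{\beta\neq\alpha}\kappa_{\beta}}$ with the same sign computation $(-1)^{|J_{\alpha}|-a}=(-1)^{\sum_{\beta\neq\alpha}(c_{\beta}-\kappa_{\beta})}$. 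Your closing checks on degenerate index vectors are a slightly more explicit treatment of points the paper handles implicitly, but the argument is the same.
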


\begin{proof}
Recall that $D_{d}\phi$ is $d!$ times the sum of $D_{I}\phi$ for subsets $I\subseteq\mathbb{N}_{N}$ of size $d$. Take such a set $I$, set $J_{\alpha}:=I \cap H_{\alpha}$ for every $\alpha$ (some of which might be empty), and Proposition \ref{DIgenpt} describes $D_{I}\phi$ in terms of the sets $\{J_{\alpha}\}_{\alpha=1}^{M}$ (ignoring the empty ones). Now, the values $y_{\alpha}$ and $y_{\beta}$ are the same from those coming from $H_{\alpha}$ and $H_{\beta}$, and the derivative $\partial_{J_{\alpha}}^{a}\phi$ equals $\partial_{H_{\alpha}}^{a}\phi$, so that the only remaining dependence on the sets $\{J_{\alpha}\}_{\alpha=1}^{M}$ are through its cardinalities. Therefore $D_{d}\phi$ is indeed a combination of the expressions $\partial_{H_{\alpha}}^{a}\phi\big/\prod_{\beta\neq\alpha}(y_{\beta}-y_{\alpha})^{c_{\beta}}$, where the equality $a+\sum_{\beta\neq\alpha}c_{\beta}=d$ holds, and where $\partial_{H_{\alpha}}^{a}\phi$ shows up in a term from $D_{I}\phi$ if and only if $J_{\alpha}\neq\emptyset$, while in this case $y_{\beta}-y_{\alpha}$ appears with a non-trivial power if and only if $J_{\beta}\neq\emptyset$ as well.

This means that for a given term, the sets $I$ for which $D_{I}\phi$ contributes to the coefficient of said term are those in which the sets $J_{\beta}$ that are non-empty are precisely those with $\beta=\alpha$ or with $y_{\beta}-y_{\alpha}$ showing up in the denominator, and by the definition of $B_{\vec{J},\alpha}$ in that proposition, we must have $|J_{\beta}| \leq c_{\beta}$ for every $\beta\neq\alpha$ as well as $a+\sum_{\beta\neq\alpha}=d=|I|=\sum_{\beta=1}^{M}|J_{\beta}|$. The inequality $|J_{\alpha}| \geq a$, required in Proposition \ref{DIgenpt}, indeed follows, and the exponent $|J_{\alpha}|-a$ in the sign from that proposition equals $\sum_{\beta\neq\alpha}(c_{\beta}-|J_{\beta}|)$. Therefore the cardinalities $\kappa_{\beta}$ of $J_{\beta}$ for all $\beta\neq\alpha$ (including the indices of empty sets) lie in the corresponding set $\Xi_{\alpha,\vec{c}}$, and for an element of this set the total contribution to the coefficient $C_{\alpha,\vec{c}}$ is the asserted sign times the product of the binomial coefficients $\binom{c_{\beta}-1}{\kappa_{\beta}-1}$ from Proposition \ref{DIgenpt}, times the number of options to choose $J_{\beta}$ from $H_{\beta}$ for every $\beta$ (including $\alpha$) in order to form $I$. As the choices for different $\beta$'s are independent, and there are $\binom{|H_{\beta}|}{\kappa_{\beta}}$ such options for every $\beta\neq\alpha$ (which means one option when $\kappa_{\beta}=0$, and we can discard those in the product), and the remaining binomial coefficient represents the number of choices of $J_{\alpha}$, of size $d-\sum_{\beta\neq\alpha}\kappa_{\beta}$, from $H_{\alpha}$, this is indeed the required contribution. Thus $C_{\alpha,\vec{c}}$ is the sum of those expressions over elements of $\Xi_{\alpha,\vec{c}}$, and we have the external multiplier $d!$ in the first equation. This proves the proposition.
\end{proof}
As for the combinatorial coefficients from Proposition \ref{Ddallpt}, note that the set $\Xi_{a,\vec{c}}^{\alpha}$ is a singleton if $c_{\beta}\leq1$ for every $\beta$, but not otherwise. In the former case the unique element is with $\kappa_{\beta}=c_{\beta}$ for all $\beta$ and hence $d-\sum_{\beta\neq\alpha}\kappa_{\beta}=a$, so that the coefficient $C_{a,\vec{c}}$ equals just $\binom{|H_{\alpha}|}{a}\prod_{\{\beta|c_{\beta}=1\}}|H_{\beta}|$ (this is, in particular, the case when $a=d$ and all the $c_{\beta}$'s vanish, where $\partial_{H_{\alpha}}^{a}\phi$ simply comes multiplied by $\binom{|H_{\alpha}|}{d}$). However, even in the next simplest case, where $c_{\gamma}=2$ and the other $c_{\beta}$'s are 0 and 1, the set $\Xi_{a,\vec{c}}^{\alpha}$ contains two elements, and $C_{a,\vec{c}}$ takes the value $\big[\binom{|H_{\alpha}|}{a}\binom{|H_{\gamma}|}{2}-\binom{|H_{\alpha}|}{a+1}|H_{\gamma}|\big]\prod_{\{\beta|c_{\beta}=1\}}|H_{\beta}|$, and the expression in parentheses expands as $\binom{|H_{\alpha}|}{a}|H_{\gamma}|\cdot\frac{(a+1)|H_{\gamma}|-2|H_{\alpha}|+(a-1)}{2a+2}$, which does not seem to simplify in any reasonable manner. When the $c_{\beta}$'s have larger values these coefficients are, of course, more complicated. Therefore in the generality that Proposition \ref{Ddallpt} is given we can write $D_{1}\phi$ as $\sum_{\alpha=1}^{M}|H_{\alpha}|\partial_{H_{\alpha}}^{1}\phi$, and we have \[D_{2}\phi=2\sum_{\alpha=1}^{M}\sum_{\beta\neq\alpha}|H_{\alpha}|\cdot|H_{\beta}|\cdot\frac{\partial_{H_{\alpha}}^{1}\phi}{y_{\beta}-y_{\alpha}}+\sum_{\alpha=1}^{M}|H_{\alpha}|(|H_{\alpha}|-1)\partial_{H_{\alpha}}^{2}\phi,\] but $D_{3}$ already contains coefficients like $3|H_{\alpha}|\cdot|H_{\beta}|(|H_{\alpha}|-|H_{\beta}|)$ in front of $\partial_{H_{\alpha}}^{1}\phi/(y_{\beta}-y_{\alpha})^{2}$. However, since the derivatives are local, at a given point we can find coordinates that respect only the partial symmetry existing around this point, which will produce a very simple form of all the derivatives of the function $\phi$ near that point---see Theorem \ref{coorgenpt} below.

For finding these coordinates, we first consider the special case of interest in Proposition \ref{Ddallpt} where the point lies along the total diagonal. Then we get the following consequence, which we phrase in terms of the normalized differential operators $\hat{D}_{r}$ from Remark \ref{Ntoinfty}.
\begin{cor}
At the point in the total diagonal of $\mathbb{R}^{N}$ where all the variables $x_{i}$, $i\in\mathbb{N}_{N}$ attain the same value $a$, the operator $\hat{D}_{d}$ from Remark \ref{Ntoinfty} takes $\phi$ to $\partial_{\mathbb{N}_{N}}^{d}\phi$. When $\phi$ is the trace function taking $(x_{1},\ldots,x_{N})$ to $\sum_{i\in\mathbb{N}_{N}}f(x_{i})$ for some function $f$, the value of $\hat{D}_{d}\phi$ is $\frac{(-1)^{d-1}}{(d-1)!}f^{(d)}(a)$. \label{totdiag}
\end{cor}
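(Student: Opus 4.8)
The plan is to derive both assertions directly from Proposition \ref{Ddallpt}, specialized to a single block, together with the explicit formula for $\partial_{J}^{g}\phi$ from Theorem \ref{expordp}. The point on the total diagonal is precisely the case $M=1$ of Proposition \ref{Ddallpt}, with $H_{1}=\mathbb{N}_{N}$ and hence $|H_{1}|=N$; here the single value $y_{1}=a$ is trivially distinct from the (empty) list of other values, so the hypotheses are met.

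First I would specialize Proposition \ref{Ddallpt} to $M=1$ and track the empty sums and products carefully. With only the index $\alpha=1$ present, the tuple $\vec{c}=\{c_{\beta}\}_{\beta\neq\alpha}$ is the empty tuple: the constraint $\sum_{\beta\neq\alpha}c_{\beta}<d$ becomes $0<d$, which holds; the denominator $\prod_{\beta\neq\alpha}(y_{\beta}-y_{\alpha})^{c_{\beta}}$ is the empty product $1$; and the only surviving derivative is $\partial_{\mathbb{N}_{N}}^{d}\phi$. The coefficient $C_{1,\vec{c}}$ collapses, since $\Xi_{1,\vec{c}}$ consists of the single empty tuple: the sign is $(-1)^{0}=1$, the final product over $\beta\neq\alpha$ is empty, and the remaining factor is $\binom{|H_{1}|}{d}=\binom{N}{d}$. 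Therefore $D_{d}\phi=d!\binom{N}{d}\partial_{\mathbb{N}_{N}}^{d}\phi=\frac{N!}{(N-d)!}\partial_{\mathbb{N}_{N}}^{d}\phi$, and multiplying by $\frac{(N-d)!}{N!}$ in the definition $\hat{D}_{d}=\frac{(N-d)!}{N!}D_{d}$ yields $\hat{D}_{d}\phi=\partial_{\mathbb{N}_{N}}^{d}\phi$, establishing the first assertion.

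For the second assertion I would substitute the trace function $\phi=\sum_{i\in\mathbb{N}_{N}}f(x_{i})$ into the explicit formula $\partial_{\mathbb{N}_{N}}^{d}\phi=\sum_{\sigma\vdash d}(-1)^{d-l(\sigma)}\frac{d\cdot(l(\sigma)-1)!}{\prod_{h=1}^{d}h!^{m_{h}}m_{h}!}\partial_{\mathbb{N}_{N}}^{\sigma}\phi$ from Theorem \ref{expordp}. The key observation, already noted for trace functions after Lemma \ref{dersym}, is that any mixed partial derivative involving two or more distinct variables vanishes, since $\partial_{x_{j}}\partial_{x_{i}}\phi=0$ for $i\neq j$. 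Hence $\partial_{\mathbb{N}_{N}}^{\sigma}\phi=0$ whenever $l(\sigma)\geq2$, and only the single-part partition $\sigma=(d)$ survives. For that partition $l(\sigma)=1$, $m_{d}=1$, so $\partial_{\mathbb{N}_{N}}^{(d)}\phi=\partial_{i}^{d}\phi=f^{(d)}(a)$ at the diagonal point, and its coefficient is $(-1)^{d-1}\frac{d\cdot0!}{d!}=\frac{(-1)^{d-1}}{(d-1)!}$. Combining with the first assertion gives $\hat{D}_{d}\phi=\partial_{\mathbb{N}_{N}}^{d}\phi=\frac{(-1)^{d-1}}{(d-1)!}f^{(d)}(a)$, as required.

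I expect no serious obstacle: the entire argument is a specialization of formulae established earlier. The only points demanding care are the bookkeeping of empty sums and products in $C_{1,\vec{c}}$ when $M=1$, so as to confirm it reduces to $\binom{N}{d}$, and the verification that the single surviving partition $(d)$ contributes exactly the coefficient $\frac{(-1)^{d-1}}{(d-1)!}$. Both are routine.
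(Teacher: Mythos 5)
Your proposal is correct and follows essentially the same route as the paper's own proof: specializing Proposition \ref{Ddallpt} to the single block $H_{1}=\mathbb{N}_{N}$, identifying the coefficient $d!\binom{N}{d}=\frac{N!}{(N-d)!}$ that cancels against the normalization of $\hat{D}_{d}$, and then observing that for a trace function all $\partial_{\mathbb{N}_{N}}^{\sigma}\phi$ with $l(\sigma)\geq2$ vanish, leaving only the length-one partition with coefficient $\frac{(-1)^{d-1}}{(d-1)!}$. Your explicit bookkeeping of the empty sums and products in $C_{1,\vec{c}}$ is a slightly more careful write-up of exactly the same argument.
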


\begin{proof}
We apply Proposition \ref{Ddallpt} for the partition of $\mathbb{N}_{N}$ into one single set $H_{\alpha}=\mathbb{N}_{N}$. Then there are no $\beta\neq\alpha$, meaning that for the unique value of $\alpha$, we only get a multiple of $\partial_{\mathbb{N}_{N}}^{d}\phi$ in the formula for $D_{d}\phi$. Since the single set $H_{\alpha}$ is of size $N$, the multiplying coefficient $C_{d,\emptyset}$ reduces to $\binom{N}{d}$, and we have the multiplier $d!$. As the normalization from Remark \ref{Ntoinfty} gives $\hat{D}_{d}=\frac{(N-d)!D_{d}}{N!}$, the two multipliers cancel and the first assertion follows. When $\phi$ is a trace function, all the derivatives $\partial_{\mathbb{N}_{N}}^{\sigma}\phi$ for $\sigma \vdash d$ with $l(\sigma)\geq2$ vanish, so that the formula for $\partial_{\mathbb{N}_{N}}^{d}\phi$ in the definition in Theorem \ref{expordp} reduces to a single term, with the coefficient $(-1)^{d-1}/(d-1)!$. As the derivative itself is the pure one of order $d$ in some variable $x_{i}$, its value at our total diagonal point is $f^{(d)}(a)$. This proves the corollary.
\end{proof}
The simple formulae from Corollary \ref{totdiag} do not extend to compositions---see Remark \ref{nocomp} below.

\begin{rmk}
There might be different normalizations for the $u_{r}$'s and the $D_{d}$'s that might produce neater results in some situations. For example, if we replace $\hat{u}_{r}$ by $(-1)^{r-1}r\hat{u}_{r}$, then the limit from Proposition \ref{Ntoinfty} becomes simply $p_{r}$, and the coefficients in Theorem \ref{coorddef} will be integers, and co-prime ones (this is related to the Newton identities mentioned in the proof of Proposition \ref{Ntoinfty}, via Remark \ref{Bell}). Then the $g$th dual operator which will now be $\frac{(-1)^{g-1}}{g}\hat{D}_{g}$, will be defined along the total diagonal as in Theorem \ref{totdiag} by the formula from Theorem \ref{expordp} but without the $g$ in the numerator and with the sign $(-1)^{l(\sigma)-1}$, and the image of as trace function arising from $f$ under the $d$th operator will simply be the Taylor coefficient $f^{(d)}(a)/d!$. Alternatively, one could have taken $\frac{(-1)^{r-1}}{(r-1)!}\hat{u}_{r}$, so that the operator $(-1)^{d-1}(d-1)!\hat{D}_{d}$ will take a trace function along the total diagonal just to the $d$th derivative of the basic univariate function. However, the normalization that we initially chose for $D_{d}$ is the one with which the basic calculation in Corollary \ref{actnoreh} involves no coefficients, and $u_{r}$ are the dual coordinates in this normalization. \label{norm}
\end{rmk}

\smallskip

At a generic point in $\mathbb{R}^{N}$ the variables take different values. Therefore at a small neighborhood of that point one can distinguish the variables according to their values, and at such a point a much simpler way to obtaining a full set of $N$ derivatives at the point would simply be the ordinary partial ones with respect to the initial variables (i.e., the derivative obtained by fixing all the variables but one), regardless of the symmetry property. This was seen (e.g., in Lemma \ref{dersym}) not to give enough derivatives when some values coincide, but merging the results that we got with the latter simple observation gives the following, simpler choice of coordinates around any given point in $\mathbb{R}^{N}$.
\begin{thm}
Take some point in $\mathbb{R}^{N}$, and define the partition of $\mathbb{N}_{N}$ into sets $\{H_{\alpha}\}_{\alpha=1}^{M}$ such that $x_{i}=y_{\alpha}$ wherever $i \in H_{\alpha}$ and $y_{\alpha} \neq y_{\beta}$ when $\alpha\neq\beta$ as in Proposition \ref{Ddallpt}. For each such $\alpha$ we define $|H_{\alpha}|$ coordinates $\hat{u}_{r}^{|H_{\alpha}|}(x_{H_{\alpha}})$, with the variables being taken only from $H_{\alpha}$, and $\hat{u}_{r}$ being the functions from Proposition \ref{Ntoinfty}, with the normalizing coefficient there, as well as those in the expressions $\tilde{e}_{\lambda}$ showing up in Theorem \ref{difcoord} for these variables, naturally being based on $|H_{\alpha}|$ rather than the full number $N$ of variables. Then, expressing any function $\phi$ which is differentiable enough times around our point using the set of coordinates $\big\{\hat{u}_{r}^{|H_{\alpha}|}(x_{H_{\alpha}})\big\}_{1\leq\alpha \leq M,\ 1 \leq r\leq|H_{\alpha}|}$, the respective derivatives of our representing function of these coordinates are $\{\partial^{d}_{H_{\alpha}}\phi\}_{1\leq\alpha \leq M,\ 1 \leq d\leq|H_{\alpha}|}$. If $\phi$ is the trace function arising from the univariate function $f$ then for each $\alpha$ these derivatives are $\{(-1)^{d-1}f^{(d)}(y_{\alpha})/(d-1)!\}_{d=1}^{|H_{\alpha}|}$. \label{coorgenpt}
\end{thm}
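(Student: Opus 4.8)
The plan is to reduce everything to two facts already established for a single block of variables: the generation-and-duality statement of Proposition~\ref{Ntoinfty} and the total-diagonal evaluation of Corollary~\ref{totdiag}, applied separately to each group $H_{\alpha}$. Throughout I write $\hat{D}_{d}^{(\alpha)}$ for the normalized operator $\hat{D}_{d}$ built only from the $|H_{\alpha}|$ variables $x_{H_{\alpha}}$, so that its denominators involve only differences $x_{j}-x_{i}$ with $i,j\in H_{\alpha}$; by Proposition~\ref{Ntoinfty} (applied in the $|H_{\alpha}|$ variables of the block) this is precisely the operator dual to the coordinate $\hat{u}_{d}^{|H_{\alpha}|}(x_{H_{\alpha}})$.

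First I would justify that the listed functions really do form a local coordinate system near our point. Since $\phi$ is symmetric it descends to $\operatorname{Sym}^{N}\mathbb{R}$, and because the block values $\{y_{\alpha}\}_{\alpha=1}^{M}$ are pairwise distinct, a neighborhood of our point in $\operatorname{Sym}^{N}\mathbb{R}$ factors as the product $\prod_{\alpha=1}^{M}\operatorname{Sym}^{|H_{\alpha}|}\mathbb{R}$ of the symmetric powers of the individual blocks (distinct blocks cannot merge under a small perturbation). On each factor the functions $\{\hat{u}_{r}^{|H_{\alpha}|}(x_{H_{\alpha}})\}_{r=1}^{|H_{\alpha}|}$ generate the ring of symmetric functions of $x_{H_{\alpha}}$ by Proposition~\ref{Ntoinfty}, hence serve as coordinates there; taken together over all $\alpha$ they give $\sum_{\alpha}|H_{\alpha}|=N$ coordinates on the product, so $\phi$ can be written as $\hat{\varphi}$ of them near the point. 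The only delicate issue is that the point sits on the within-block diagonals, which is harmless precisely because it is a \emph{smooth} point in these coordinates—this is the reason for passing to the block symmetric powers rather than working on $\mathbb{R}^{N}$ directly.

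Next I would run the duality argument. At nearby points where the variables inside each block are distinct, I apply $\hat{D}_{d}^{(\alpha)}$ to the identity $\phi=\hat{\varphi}(\{\hat{u}_{e}^{|H_{\beta}|}\})$ and use the chain rule. Because $\hat{D}_{d}^{(\alpha)}$ differentiates only the $H_{\alpha}$-variables while $\hat{u}_{e}^{|H_{\beta}|}$ depends on the disjoint block $H_{\beta}$ for $\beta\neq\alpha$, every cross term drops out; and by Proposition~\ref{Ntoinfty} (equivalently the duality $D_{d}u_{r}=\delta_{d,r}$ of Theorem~\ref{coorddef}) one has $\hat{D}_{d}^{(\alpha)}\hat{u}_{e}^{|H_{\alpha}|}=\delta_{d,e}$. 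Hence $\hat{\varphi}_{\hat{u}_{d}^{|H_{\alpha}|}}=\hat{D}_{d}^{(\alpha)}\phi$ at all such nearby points.

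Finally I would pass to the limit at our point. The coordinates are polynomials forming a genuine chart there, so $\hat{\varphi}_{\hat{u}_{d}^{|H_{\alpha}|}}$ is continuous and its value at the point equals the limit of $\hat{D}_{d}^{(\alpha)}\phi$ as the $H_{\alpha}$-variables coalesce to $y_{\alpha}$. Viewing $\phi$ as a symmetric function of $x_{H_{\alpha}}$ alone (with the remaining variables frozen at their values $y_{\beta}$), Corollary~\ref{totdiag} evaluates this limit as $\partial_{H_{\alpha}}^{d}\phi$, giving the first assertion. For the trace function $\phi=\sum_{i}f(x_{i})$ the summands $f(x_{i})$ with $i\notin H_{\alpha}$ are constant under $\hat{D}_{d}^{(\alpha)}$ and drop out, so the trace part of Corollary~\ref{totdiag} applied to the block yields $\hat{\varphi}_{\hat{u}_{d}^{|H_{\alpha}|}}=(-1)^{d-1}f^{(d)}(y_{\alpha})/(d-1)!$, as claimed. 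The main obstacle is the legitimacy of interchanging the coordinate derivative with the coalescence limit, i.e.\ knowing that the partially diagonal point is a smooth point of the block-product chart, so that the continuous derivatives of $\hat{\varphi}$ may be computed as limits of the (a priori only generically defined) operators $\hat{D}_{d}^{(\alpha)}$.
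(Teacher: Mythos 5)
Your proposal is correct and takes essentially the same approach as the paper: both arguments reduce to the individual blocks $H_{\alpha}$ by exploiting that each coordinate depends only on the variables of one block, invoke the block-wise duality $\hat{D}_{d}^{(\alpha)}\hat{u}_{e}^{|H_{\alpha}|}=\delta_{d,e}$ coming from Theorem \ref{coorddef}/Proposition \ref{Ntoinfty} in $|H_{\alpha}|$ variables, and then evaluate at the within-block total diagonal via Corollary \ref{totdiag}. Your added explicitness about the local product-chart structure on $\prod_{\alpha}\operatorname{Sym}^{|H_{\alpha}|}\mathbb{R}$ and about interchanging the coordinate derivative with the coalescence limit spells out steps the paper's proof treats implicitly, but it is the same argument, not a different route.
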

Of course, replacing $\hat{u}_{r}^{|H_{\alpha}|}(x_{H_{\alpha}})$ by some other normalization, like the ones from Remark \ref{norm}, one can obtain, for a trace function, the Taylor coefficients $f^{d}(y_{\alpha})/d!$, or just the derivatives $f^{(d)}(y_{\alpha})$ themselves, for every $1 \leq d\leq|H_{\alpha}|$ (and every $1\leq\alpha \leq M$). The coordinates used in Lemma 2.4 of \cite{[EKZ]} are, in some other scalar normalization, the ones arising from Theorem \ref{coorgenpt} when the sizes from that theorem being either 1 or 2.

\begin{proof}
At any point in a small enough neighborhood of our point, we can distinguish the variables associated with $i \in H_{\alpha}$ for any $1\leq\alpha \leq M$ as those whose values are close enough to $y_{\alpha}$. We thus take such $\alpha$, fix all the variables $x_{i}$ with $i \not\in H_{\alpha}$ as the corresponding value $y_{\beta}$ (with $\beta\neq\alpha$), and consider the resulting function of $\{x_{i}\}_{i \in H_{\alpha}}$. It is a symmetric function of these variables, considered around the total diagonal point where they all equal $y_{\alpha}$. We therefore express this function using the coordinates $\big\{\hat{u}_{r}^{|H_{\alpha}|}(x_{H_{\alpha}})\big\}_{r=1}^{|H_{\alpha}|}$, and the respective derivatives are $\{\partial^{d}_{H_{\alpha}}\phi\}_{d=1}^{|H_{\alpha}|}$ (or the asserted derivatives of $f$ at $y_{\alpha}$ alone in case $\phi$ is the trace function associated with $f$) by Theorem \ref{totdiag}.

Now, this description of the derivatives of our function of $|H_{\alpha}|$ variables give derivatives of our function representing $\phi$ as long as the other coordinates that we take for $\phi$ do not involve the variables $\{x_{i}\}_{i \in H_{\alpha}}$. As our set of coordinates has the property that a coordinate with index $\alpha$ is only based on the variables $\{x_{i}\}_{i \in H_{\alpha}}$, gathering these coordinates together indeed gives a set of coordinates for $\phi$ in which the total set of derivatives is the union of the ones arising from each $\alpha$ separately. As this yields the asserted formulae (also in the trace function case), this proves the theorem.
\end{proof}

\smallskip

We now complete a missing proof from above.
\begin{proof}[Proof of Theorem \ref{uniqders}]
We already saw that $\{u_{r}\}_{r=1}^{N}$ generate, over $\mathbb{Q}$, all the symmetric functions in $\{x_{i}\}_{i\in\mathbb{N}_{N}}$. It follows, by homogeneity, that we can write every $v_{r}$ as a polynomial $p_{r}$ in $\{u_{d}\}_{d=1}^{r}$. Moreover, $p_{r}$ is of the form $c_{r}u_{r}$ plus a polynomial in $\{u_{d}\}_{d=1}^{r-1}$ (homogeneity again), and for the $v_{r}$'s to be coordinates, i.e., algebraically independent, we must have $c_{r}\neq0$ for every $r$.

We now write $D_{d}\phi$ as $\partial_{u_{d}}\varphi(u_{1},\ldots,u_{N})=\partial_{u_{d}}\eta(v_{1},\ldots,v_{N})$ via Corollary \ref{difcoord} and the fact that $\varphi(u_{1},\ldots,u_{N})$ and $\eta(v_{1},\ldots,v_{N})$ both represent the same expression $\phi(x_{1},\ldots,x_{N})$, and then the chain rule expresses the latter derivative as $\sum_{r=d}^{N}\frac{\partial p_{r}}{u_{d}}\cdot\eta_{v_{r}}$, with the summation starting from $r=d$ since $v_{r}=p_{r}(u_{1},...,u_{r})$ does not depend on $u_{d}$ with $d>r$. Moreover, $\frac{\partial p_{d}}{u_{d}}$ is the scalar $c_{d}\neq0$, meaning that the column vector with entries $\{D_{d}\phi\}_{d=1}^{N}$ is obtained from $\{\eta_{v_{r}}\}_{r=1}^{N}$ via multiplication by an invertible upper-triangular matrix $M$. Moreover, the entry $M_{dr}$ of $M$, with $r \geq d$, is the derivative $\frac{\partial p_{r}}{u_{d}}$, which is homogenous of degree $r-d$ in $\{x_{i}\}_{i\in\mathbb{N}_{N}}$.

Thus, to obtain the derivatives $\eta_{v_{r}}$, $1\ \leq r \leq N$ themselves, we multiply the vector of the $D_{d}\phi$'s by the inverse matrix $M^{-1}$. The degree $r-d$ homogeneity of $M_{dr}$ implies that the entry $M^{-1}_{rd}$ of $M^{-1}$, with $d \geq r$, is homogenous of degree $d-r$ (as so are all the combinations of products of entries of $M$ that go into this inverse entry). Writing the symmetric polynomial appearing in $M^{-1}_{rd}$ as $q_{d,r}(x_{1},\ldots,x_{n})$, we deduce that $\eta_{v_{r}}=\sum_{d=r}^{N}q_{d,r} \cdot D_{d}\phi$, where $q_{d,d}$ is the constant $\frac{1}{c_{d}}$.

Now, the function $q_{d,r}$ is symmetric of degree $d-r$, and when it multiplies $D_{d}$, the statement that $\eta_{v_{r}}$ has no non-constant numerator means that at every point in $\mathbb{R}^{N}$, and in every term in the presentation of $D_{d}$ at that point, the denominator from that presentation of $D_{d}$ has to cancel with the form that $q_{d,r}$ takes at that point. For every $0 \leq s \leq N-1$ consider the set $Y_{s}$ where all the variables $x_{i}$ with $i>s$ have to take the same value, and the variables $x_{i}$ with $1 \leq i \leq s$ are free. It is clear that $Y_{s} \subseteq Y_{s+1}$, that $Y_{0}$ is the total diagonal, and $Y_{N-1}$ is the full space. Assume that $q_{d,r}\neq0$ for some $r$ and $d>r$, take $s$ to be minimal such that some such $q_{d,r}$ does not vanish identically on $Y_{p}$ (the existence of such a $s$ is clear), and let $d$ be maximal such that $q_{d,r}$ does not vanish identically on this $Y_{s}$ for some $r<d$. For such an $r$ (yielding $s$ and $d$) we shall consider the expression for $\eta_{v_{r}}$ at a point in $Y_{s}$ such that all the variables $x_{i}$ with $1 \leq i \leq s+1$ take distinct values, meaning that in the partition from Proposition \ref{Ddallpt} we can take $M=s+1$, with $H_{i}$ being the singleton $\{i\}$ for $1 \leq i \leq s$, and $H_{s+1}$ containing all the other indices from $\mathbb{N}_{N}$, and is thus of size $N-s$. We also have $y_{i}=x_{i}$ for all $1 \leq i \leq s$, and we denote by $y$ the common value $y_{s+1}$ of all $x_{i}$ with $i \in H_{s}$, i.e., $i>s$.

We will be interested in the summands associated with $\alpha=s+1$ in Proposition \ref{Ddallpt}. Since $|H_{\beta}|=1$ for every $\beta\neq\alpha$, we only obtain contributions to $C_{\alpha,\vec{c}}$ from the element of $\Xi_{\alpha,\vec{c}}$ in which $\kappa_{\beta}=1$ wherever $c_{\beta}>0$ (and $\kappa_{\beta}=0$ when $c_{\beta}=0$ by definition). Assuming that $|\{1\leq\beta \leq s|c_{\beta}>0\}|$ is some number $1 \leq t \leq s$, this coefficient is a sign times $\binom{N-s}{d-t}$. For maximizing the order of the differential operator, we consider the summands in which every positive $c_{\beta}$ equals 1 (yielding a positive $C_{\alpha,\vec{c}}$ multiplying $\partial_{H_{s}}^{d-t}\phi$ divided by a product of distinct linear terms), and we also concentrate on the summands in which $t$ is minimal (but $C_{\alpha,\vec{c}}\neq0$).

Now, if $d \leq N-s$ then we can take $t=0$, and obtain a non-trivial numerator times a multiple of $\partial_{H_{s}}^{d}\phi$, contradicting our assumption on such numerators at every point. This covers the case where $s=0$. Otherwise we have $d=N-s+t$ for some $1 \leq t \leq s$, we take this value of $t$ (with $d-t=N-s$), and we find that the numerator $q_{d,r}$ has to cancel to a constant when divided by any product of $t$ of the expressions $x_{i}-y$ for $1 \leq i \leq s$. But in the polynomial ring in $x_{i}$, $1 \leq i \leq s$ (for fixed $y$) this means that $q_{d,r}$ is divisible by all this linear expressions, hence by their product, and thus if $t<s$ then yet again we obtain $\partial_{H_{s}}^{d-t}\phi=\partial_{H_{s}}^{N-s}\phi$ times a non-trivial numerator. It remains to consider the case where $t=s$, i.e., $d=N$, and then when we divide $q_{d,r}$ by the product $\prod_{i=1}^{s}(x_{i}-y)$ then we must get a constant. This determines the degree $d-r$ of $q_{d,r}$ to be $s$, and as $d=N$ this can only happen for $t=N-s$.

This function $q_{N,N-s}$ is symmetric in $N$ variables and homogenous of degree $s$, but reduces to $\prod_{i=1}^{s}(x_{i}-y)$ when $x_{i}=y$ for all $i>s$. But this function contains the monomial $\prod_{i=1}^{s}x_{i}$ with the coefficient 1, and the monomials $\prod_{i=1}^{s-1}x_{i} \cdot x_{j}$ with $j>s$ with coefficients that sum, over $j>s$, to $-1$. As this function cannot be symmetric in all $N$ variables, we reach a contradiction, meaning that $q_{d,r}$ must vanish for every $d>r$ under the hypothesis of no non-constant numerators. This means that our matrix $M^{-1}$, and with it $M$, are diagonal, and therefore the polynomial $p_{r}$ does not depend on $u_{d}$ for any $d<r$. But this implies that $v_{r}=c_{r}u_{r}$, and as the reciprocal $\frac{1}{c_{r}}$ on the diagonal is the coefficient with which $D_{r}\phi$ shows up in $\eta_{v_{r}}$, our assumption on that coefficient yields $c_{r}=1$ for every $1 \leq r \leq N$. This implies that $v_{r}=u_{r}$ for every such $r$ as desired. This completes the proof of the theorem.
\end{proof}

\begin{rmk}
One may ask about composing the operators $D_{d}$ along the total diagonal, for obtaining, for example a shorter proof for Theorem \ref{annbyders}. Indeed, we can define compositions $D_{\lambda}=\prod_{h=1}^{d}D_{h}^{m_{h}}$ for some partition $\lambda \vdash d$ (with $m_{h}$ being the multiplicity of $h$ in $\lambda$ as always), and they are all linearly independent by Corollary \ref{Weyl}. Then applying such a $D_{\lambda}$ to $u_{r}$ when $d<r$ would annihilate it (just use some $D_{h}$ with $m_{h}>0$ and the fact that $D_{h}u_{r}=0$ since $h<r$), and in case one could replace, along the total diagonal, every $D_{h}$ by the formula from Theorem \ref{totdiag}, then the $D_{\lambda}$'s for $\lambda \vdash d$ would have given linearly independent operators of order $d$ there, using which the proof of Theorem \ref{annbyders} would become much easier. To see that this argument is wrong, we consider the case $d=r$, and Proposition \ref{Ntoinfty}. Such an argument would have implied that the only such derivative $D_{\lambda}$ that does not annihilate $u_{r}$ would have been the one with $l(\lambda)=1$, and thus the derivatives from that proposition, which are constants and therefore can be evaluated, in particular, along the total diagonal, would be based on the coefficients in the presentation of these operators in terms of the $D_{\lambda}$'s, which do not depend on $N$. The much finer dependence on $N$ of the values from the proof of that proposition exemplifies that this is not the case, and an argument using the compositions of the derivatives from Theorem \ref{totdiag} for compositions of the $D_{d}$'s is not valid. The reason is that for compositions of differential operators, we need their formulae on full neighborhoods, and the formulae from Theorem \ref{totdiag} are valid only at total diagonal points (indeed, comparing the values of the higher-order derivatives from that theorem with those of $D_{d}$ on general symmetric functions produces different values, due to the latter operating via Leibniz' Rule on products, which the former involve more expressions). \label{nocomp}
\end{rmk}

\smallskip

We conclude with a question about more general settings. Our coordinates are those for symmetric powers of the affine line, which are the invariant spaces of the action of $S^{N}$ on the usual $N$th power of the affine line. One may ask for other group actions on affine spaces, and good coordinates for differentiation on their spaces of invariants. Note that in many natural examples, like the symmetric powers of an affine space of dimension $M>1$ (where $S_{N}$ acts on a space of dimension $MN$), or the alternating group $A_{N}$ acting on such a space (now with $M\geq1$), the quotient spaces tend to be singular, including the GIT quotients (since the algebra of invariants is no longer a polynomial ring, like Theorem \ref{egen} establishes for the case considered in this paper). We leave such questions for further research.

\noindent\textsc{Einstein Institute of Mathematics, the Hebrew University of Jerusalem, Edmund Safra Campus, Jerusalem 91904, Israel}

\noindent E-mail address: zemels@math.huji.ac.il


\begin{thebibliography}{}{}

\bibitem[B]{[B]} Barlet, D., \textsc{On Symmetric Partial Differential Operators}, hal-02371835, https://arxiv.org/abs/1911.09347 (2019)
\bibitem[EKZ]{[EKZ]} Enolskii, V., Kopeliovich Y., Zemel, S., \textsc{Thomae's Derivative Formulae for Trigonal Curves}, Lett. Math. Phys., vol 110 issue 3, 611--637 (2020).
\bibitem[FK]{[FK]} Farkas, H. M., Kra, I., \textsc{Riemann Surfaces}, Graduate Text in Mathematics 71, Springer--Verlag, 354pp (1980).
\bibitem[M]{[M]} MacDonald, I. G., \textsc{Symmetric Functions and Hall Polynomials} (second edition), Oxford Mathematical Monographs, Oxford Science Publications, x+475pp (1995).

\end{thebibliography}
\end{document}